\newtheorem{theorem}{Theorem}
\theoremstyle{plain}
\newtheorem{definition}{Definition}
\newtheorem{lemma}{Lemma}
\newtheorem{proposition}{Proposition}
\newtheorem{remark}{Remark}
\numberwithin{equation}{section}
\begin{document}

\title[Fractional Schr\"odinger systems coupled by Hardy-Sobolev critical terms]
{Fractional Schr\"odinger systems coupled by Hardy-Sobolev critical terms}

\author{Alejandro Ortega}
\email[A. Ortega ]{alortega@math.uc3m.es}
\address[A. Ortega]{Departamento de Matem\'aticas,  
Universidad Carlos III de Madrid, Av. Universidad 30, 28911 Legan\'es (Madrid), Spain}

\thanks{The author is partially supported by the State Research Agency of Spain, under research project PID2019-106122GB-I00.}
\subjclass[2010]{Primary 35R11, 47G30, 35J50, 35A15; Secondary 35Q55, 49J35, 35J60, 35Q40} %
\keywords{Fractional Elliptic Systems, Ground states, Bound states, Variational methods, Nehari Manifold
Compactness Principles, Hardy-Potential, Doubly critical problems}%

\begin{abstract}
In this work we analyze a class of nonlinear fractional elliptic systems involving Hardy--type potentials and coupled by critical Hardy-Sobolev--type nonlinearities in $\mathbb{R}^N$. Due to the lack of compactness at the critical exponent the variational approach requires a careful analysis of the Palais-Smale sequences. In order to overcome this loss of compactness, by means of a concentration--compactness argument the compactness of PS sequences is derived. This, combined with a energy characterization of the semi-trivial solutions, allow us to conclude the existence of positive ground and bound state solutions in terms of coupling parameter $\nu>0$ and the involved exponents $\alpha,\beta$.
\end{abstract}
\maketitle


\section{Introduction}
In this work we analyze the existence of positive bound and ground state solutions for a class of nonlinear fractional elliptic systems involving Hardy--type potentials and coupled by critical Hardy-Sobolev--type nonlinearities, namely
\begin{equation}\label{system}\tag{$S_{\alpha,\beta}$}
\left\{\begin{array}{ll}
\displaystyle (-\Delta)^{s} u - \lambda_1 \frac{u}{|x|^{2s}}-\frac{u^{2^*_{s}(t)-1}}{|x|^{t}}= \nu \alpha h(x) \frac{u^{\alpha-1}v^\beta}{|x|^{t}}   &\text{in }\mathbb{R}^N,\vspace{.3cm}\\
\displaystyle (-\Delta)^{s} v - \lambda_2 \frac{v}{|x|^{2s}}-\frac{v^{2^*_{s}(t)-1}}{|x|^{t}}= \nu \beta h(x) \frac{u^\alpha v^{\beta-1}}{|x|^{t}} &\text{in }\mathbb{R}^N,\vspace{.3cm}\\
u,v> 0 & \text{in }\mathbb{R}^N\setminus\{0\},
\end{array}\right.
\end{equation}
where $0<s<1$, $0< t<2s$ and $2_s^*(t)=\frac{2(N-t)}{N-2s}$ the fractional Hardy-Sobolev critical exponent, $\lambda_1,\lambda_2\in(0,\Lambda_{N,s})$ with $\Lambda_{N,s}=2\pi^{\frac{N}{2}}\frac{\Gamma^2\left(\frac{N+2s}{4}\right)}{\Gamma^2\left(\frac{N-2s}{4}\right)}\frac{\Gamma\left(\frac{N+2s}{2}\right)}{|\Gamma(-s)|}$ the best Hardy constant in the fractional Hardy inequality, the coupling parameter $\nu>0$ and the real parameters $\alpha,\beta$ such that
\begin{equation}\label{alphabeta}
\alpha, \beta> 1 \qquad \mbox{ and } \qquad  \alpha+\beta\le 2_{s}^*(t),
\end{equation}
whereas the function $h\gneq0$ is such that  {
\begin{equation}\label{H}\tag{$H$}
h(x)\in L^{\frac{2^*_s(t)}{2^*_s(t)-\alpha-\beta},t}(\mathbb{R}^N),
\end{equation}
where $L^{p,t}(\mathbb{R}^N)$ denotes the weighted $L^p$-based space of measurable functions $u$ such that
\[\| u\|_{L^{p,t}(\mathbb{R}^N)}^p:= \int_{\mathbb{R}^N}\frac{|u|^p}{|x|^t}dx<\infty.\]
In the case of having $\alpha+\beta=2^*_s(t)$, then \eqref{H} simply establishes $h\in L^{\infty}(\mathbb{R}^N)$.
}
Here $(-\Delta)^s$ denotes the fractional Laplace operator,
\begin{equation*}
(-\Delta)^su(x)=p.v.\int_{\mathbb{R}^N}\frac{u(x)-u(y)}{|x-y|^{N+2s}}dy.
\end{equation*}
Systems like \eqref{system} has attracted great attention in the last years. For $s=1$ and $t=0$, we get the system 
\begin{equation}\label{local}
\left\{\begin{array}{ll}
\displaystyle -\Delta u - \lambda_1 \frac{u}{|x|^{2}}-u^{2^*-1}= \nu \alpha h(x) u^{\alpha-1}v^\beta   &\text{in }\mathbb{R}^N,\vspace{.3cm}\\
\displaystyle -\Delta v - \lambda_2 \frac{v}{|x|^{2}}-v^{2^*-1}= \nu \beta h(x) u^\alpha v^{\beta-1} &\text{in }\mathbb{R}^N,\vspace{.3cm}\\
u,v> 0 & \text{in }\mathbb{R}^N\setminus\{0\},
\end{array}\right.
\end{equation}
which arises in connection with solitary or standing wave solutions to Gross-Pitaevskii--type systems. This problem occurs in various physical phenomena, like the Hartree-Fock theory for double condensates, binary mixture Bose-Einstein condensates, and nonlinear optics (cf. \cite{Akhmediev1999, Esry1997, Frantzeskakis2010,Kivshar1998}).  The existence of positive ground state and bound state solutions to \eqref{local} was addressed in \cite{Abdellaoui2009} where the authors proved the existence of positive ground state solutions depending on the parameter $\nu>0$ under the assumption $0\lneq h\in L^\infty(\mathbb{R}^N)\cap L^1(\mathbb{R}^N)$ if $2<\alpha+\beta<2_1^*(0)$ and $h\in L^\infty(\mathbb{R}^N)$ if $2<\alpha+\beta=2_1^*(0)$. This was then extended by relaxing the hypotheses on $h$ (cf. \cite{Kang2014, Zhong2015}), by considering a wider range on the exponents (cf. \cite{Colorado2022}), by considering extremal values of the exponents, namely $\alpha=2,\beta=1$ (cf. \cite{Colorado2023}), or by introducing Hardy-Sobolev potentials in the coupling term (cf. \cite{LopezSoriano2023}). Precisely, the main aim of his work is to extend to the fractional setting the results of \cite{LopezSoriano2023}. { Moreover, regarding the condition \eqref{H}, let us stress the following. As it is noted in \cite{LopezSoriano2023}, for the case $s=1$ and $t=0$, studied in \cite{Abdellaoui2009,Colorado2022,Colorado2023}, it is assumed that $h\in L^1(\mathbb{R}^N)\cap L^\infty(\mathbb{R}^N)$ if $\alpha+\beta<2^*$, which  in turn also implies that $h\in L^q(\mathbb{R}^N)$ for every $q>1$. In contrast, the hypotheses \eqref{H} relaxes such condition by choosing the appropriate intermediate functional space that still allows $h$ to control de concentration phenomena at $0$ and $\infty$, namely, $h\in L^{\frac{2^*}{2^*-\alpha-\beta}}(\mathbb{R}^N)$ if $s=1$ and $t=0$ or, in general, $h\in L^{\frac{2_s^*(t)}{2_s^*(t)-\alpha-\beta},t}(\mathbb{R}^N)$ if $0<s<1$ and $t>0$.}

The study of systems like \eqref{system} continues an extensive line of research on elliptic systems. For instance, the above system \eqref{local} is closely related to the Schr\"{o}dinger--type system
\begin{equation}\label{Sch}
\left\{\begin{array}{ll}
\displaystyle -\Delta u - V_1(x) u=\mu_1u^{2p-1} +\nu  u^{p-1}v^p   &\text{in }\mathbb{R}^N,\vspace{.3cm}\\
\displaystyle -\Delta v - V_2(x)v=\mu_2 v^{2^*-1}+ \nu  u^p v^{p-1} &\text{in }\mathbb{R}^N,\vspace{.3cm}\\
u,v> 0 & \text{in }\mathbb{R}^N\setminus\{0\},
\end{array}\right.
\end{equation}
where $2\leq 2p\leq 2_1^*(0)$. Existence and multiplicity of solutions to \eqref{Sch} has been extensively addressed in the literature (cf. \cite{Ambrosetti2007, Ambrosetti2007a, Bartsch2006, Lin2005,Liu2010,Maia2006,Pomponio2006} and references therein). Consequently, for general $0<s<1$ and $t=0$ the system \ref{system} is closely related to the fractional Schr\"{o}dinger system
\begin{equation}\label{FSch}
\left\{\begin{array}{ll}
\displaystyle (-\Delta)^s u - V_1(x) u=\mu_1u^{2p-1} +\nu  u^{p-1}v^p   &\text{in }\mathbb{R}^N,\vspace{.3cm}\\
\displaystyle (-\Delta)^s v - V_2(x)v=\mu_2 v^{2p-1}+ \nu  u^p v^{p-1} &\text{in }\mathbb{R}^N,\vspace{.3cm}\\
u,v> 0 & \text{in }\mathbb{R}^N\setminus\{0\},
\end{array}\right.
\end{equation}
where $2\leq 2p\leq 2_s^*(0)$. Existence and multiplicity of solutions to the fractional elliptic system \eqref{FSch} or related ones can be found in \cite{Colorado2023a,Guo2016,He2017,He2020,Shen2022,Xu2022} and references therein. 

An important feature of system \eqref{system} is that, either we consider the uncoupled case $\nu=0$ or one component vanish, we are left to deal with the solutions $z^{\lambda,t,s}$ to the equation, 
\begin{equation}\label{e1}
 (-\Delta)^{s} z - \lambda \frac{z}{|x|^{2s}}=\frac{z^{2^*_{s}(t)-1}}{|x|^{t}},\qquad \text{and}\ z>0\ \text{in}\ \mathbb{R}^N\backslash\{0\}.
\end{equation}
In the case $s=1$, solutions $z^{\lambda,t,1}$ of the above equation were completely characterized by Kang \cite{Kang2004}. In particular, the family of the rescaled functions associated to $z^{\lambda,t,1}$ turns out to be the extremal functions for the corresponding Hardy-Sobolev constant (see \eqref{S1} with $s=1$) or, equivalenty, the extremal functions for the Hardy-Sobolev inequality (see \ref{HS_ineq_l} with $s=1$). In addition, explicit expressions for that solutions (see \eqref{Keq}) are also available. On the contrary, no such explicit expression is known for solutions $z^{\lambda,t,s}$ to \eqref{e1} and general $0<s<1$. Nevertheless, some important properties has been recently proved for solutions to \eqref{e1} for $0<s<1$ and $0\leq t<2s$ (cf. \cite{Dipierro2016,Ghoussoub2018,Ghoussoub2015}. These results, together with the concentration properties and concentration profile characterization of $z^{\lambda,t,s}$ provided in \cite{Bhakta2021} and \cite{Bonder2018, Chen2018}, play a key role in the variational argument used in this work as they represent the fractional counterpart to the the classical results on concentration profile characterization \cite{Li2011,Smets2005,Struwe2008} and the well-known Concentration-Compactness Principle by P.-L. Lions \cite{Lions1,Lions2} respectively. Recently, in \cite{Kumar2022} the authors addressed system \eqref{system} with $t=0$. It is worth to mention that, in case of having $t=0$, the concentration phenomena can take place at different points $x_j\in\mathbb{R}^N$ which requires additional hypotheses, either on the size of the coupling parameter or by assuming the function $h$ to be a radial function, in order to avoid such phenomena.\newline

\vspace{0.3cm}

{\bf Organization of the paper:} In Section \ref{sect:Functional}, we outline the functional settings and we present some preliminary results concerning our variational approach based on the Nehari manifold. Additionally, we explore the local behavior of the semi-trivial solutions in terms of the coupling parameter $\nu>0$ and the exponents $\alpha,\beta$. In Section \ref{sect:Palais-Smale}, we examine the Palais-Smale condition for the energy functional associated to \ref{system}, which is proved to be satisfied in both subcritical and critical regimes. In this step, we make a fundamental use of the concentration results

 Finally, in Section \ref{sect:Main-Results}, we provide the proof for the main results about existence of positive bound and ground state solutions to system \ref{system}.

\section{Functional Settings and Preliminaries}\label{sect:Functional}
The system \eqref{system} is the Euler-Lagrange system corresponding to energy functional 
\begin{equation*}
\begin{split}
J_\nu(u,v)=&\frac{1}{2}\iint_{\mathbb{R}^{2N}}\frac{|u(x)-u(y)|^2}{|x-y|^{N+2s}}dydx+\frac{1}{2}\iint_{\mathbb{R}^{2N}}\frac{|v(x)-v(y)|^2}{|x-y|^{N+2s}}dydx\\
&-\frac{\lambda_1}{2}\int_{\mathbb{R}^N}\frac{u^2}{|x|^{2s}}dx-\frac{\lambda_2}{2}\int_{\mathbb{R}^N}\frac{v^2}{|x|^{2s}}dx-\frac{1}{2_s^*(t)}\int_{\mathbb{R}^N}\frac{|u|^{2_s^*(t)}}{|x|^{t}}dx-\frac{1}{2_s^*(t)}\int_{\mathbb{R}^N}\frac{|v|^{2_s^*(t)}}{|x|^{t}}dx\\
&-\nu\int_{\mathbb{R}}h(x)\frac{|u|^\alpha|v|^\beta}{|x|^{t}}dx.
\end{split}
\end{equation*}
The functional $J_\nu$ is well defined on the space $\mathbb{D}\vcentcolon=\mathcal{D}^{s,2}(\mathbb{R}^{N})\times \mathcal{D}^{s,2}(\mathbb{R}^{N})$ with $\mathcal{D}^{s,2}(\mathbb{R}^{N})$ being the closure of $C_0^{\infty}(\mathbb{R}^N)$ under the Gagliardo seminorm
\begin{equation*}
\| u\|_{\mathcal{D}^{s,2}(\mathbb{R}^{N})}^2\vcentcolon= \iint_{\mathbb{R}^{2N}}\frac{|u(x)-u(y)|^2}{|x-y|^{N+2s}}dydx.
\end{equation*}
We refer to \cite{Brasco2021,Brasco2019} for a detailed analysis of the spaces $\mathcal{D}^{s,2}(\mathbb{R}^{N})$. Moreover, it is easy to see that the functional $\mathcal{J}_\nu$ is of class $C^1(\mathbb{D},\mathbb{R})$. The space $\mathbb{D}$ is endowed with the norm
\begin{equation*}
\|(u,v)\|_{\mathbb{D}}^2=\|u\|_{\lambda_1,s}^2+ \|v\|_{\lambda_2,s}^2
\end{equation*}
where
\begin{equation*}
\|u\|_{\lambda,s}^2=\iint_{\mathbb{R}^{2N}}\frac{|u(x)-u(y)|^2}{|x-y|^{N+2s}}dydx-\lambda\int_{\mathbb{R}^N}\frac{u^2}{|x|^{2s}}dx.
\end{equation*}
Note that, because of the fractional Hardy inequality (cf. \cite{Frank2008}),
\begin{equation}\label{hardy_inequality}
\Lambda_{N,s}\int_{\mathbb{R}^N}\frac{u^2}{|x|^{2s}}dx\leq \iint_{\mathbb{R}^{2N}}\frac{|u(x)-u(y)|^2}{|x-y|^{N+2s}}dydx,
\end{equation}
the norm $\|\cdot\|_{\lambda,s}$ is well defined. Moreover, due to \eqref{hardy_inequality} both norms, $\| \cdot \|_{\mathcal{D}^{s,2}(\mathbb{R}^{N})}$ and $\|\cdot\|_{\lambda,s}$ are equivalent norms provided $\lambda\in(0,\Lambda_{N,s})$.

On the other hand, due to the homogeneities of the terms composing the functional $J_\nu$, it is easy to see that $\mathcal{J}_\nu(t \tilde u,t \tilde v) \to -\infty$ as $t\to+\infty$ so that the functional $J_\nu$ is unbounded from below. Since our strategy is based on find solutions to \eqref{system} as critical points of $J_\nu$, a standard approach is then to find critical points of $J_\nu$ on the underlying Nehari manifold where $J_\nu$ is bounded which allow us to minimize $\mathcal{J}_\nu$. Let us set 
\begin{equation*}
\begin{split}
\Psi(u,v)&=\left\langle \mathcal{J}'_\nu(u,v){\big|}(u,v)\right\rangle\\
&=\|(u,v)\|_\mathbb{D}^2-\int_{\mathbb{R}^N}\frac{|u|^{2^*_{s}(t)}}{|x|^{t}}dx - \int_{\mathbb{R}^N}\frac{|v|^{2^*_{s}(t)}}{|x|^{t}} dx -\nu (\alpha+\beta) \int_{\mathbb{R}^N} \frac{|u|^{\alpha} |v|^{\beta}}{|x|^t} \, dx,
\end{split}
\end{equation*}
and define the Nehari manifold associated to $\mathcal{J}_\nu$ as
\begin{equation*}
\mathcal{N}_\nu=\left\{ (u,v) \in \mathbb{D} \setminus \{(0,0)\} \, : \, \Psi(u,v) =0 \right\}.
\end{equation*}
The Nehari manifold approach is by nowadays a well-known technique. We introduce next some important properties useful in what follows. By definition, $\mathcal{N}_\nu$ contains all the non-trivial critical points of $\mathcal{J}_\nu$ in $\mathbb{D}$, i.e., given $(u,v) \in \mathcal{N}_\nu$ we have
\begin{equation} \label{Nnueq1}
 \|(u,v)\|_\mathbb{D}^2=\int_{\mathbb{R}^N}\frac{|u|^{2^*_{s}(t)}}{|x|^{t}}dx + \int_{\mathbb{R}^N}\frac{|v|^{2^*_{s}(t)}}{|x|^{t}} dx +\nu (\alpha+\beta) \int_{\mathbb{R}^N} h \frac{|u|^{\alpha} |v|^{\beta}}{|x|^t} \, dx,
\end{equation}
and, thus,
\begin{equation}\label{NN2}
\mathcal{J}_{\nu}{\big|}_{\mathcal{N}_\nu} (u,v)\!=\! \frac{2s-t}{2(N-t)} \left(\int_{\mathbb{R}^N}  \frac{|u|^{2^*_{s}(t)}}{|x|^{t}}dx+\int_{\mathbb{R}^N}\frac{|v|^{2^*_{s}(t)}}{|x|^{t}}dx\right)\!+\!\nu \left( \frac{\alpha+\beta-2}{2} \right)\!\int_{\mathbb{R}^N} h \frac{|u|^{\alpha} |v|^{\beta}}{|x|^t} dx.
\end{equation}
Moreover, because of \eqref{alphabeta} and \eqref{Nnueq1}, for any $(u,v) \in \mathcal{N}_\nu$, we have
\begin{equation*}
\begin{split}
\mathcal{J}_\nu''(u,v)[u,v]^2=&\ (2-\alpha-\beta)\|(u,v)\|_{\mathbb{D}}^2 \\
&+ (\alpha+\beta-2_{s}^*(t))\left(\int_{\mathbb{R}^N}  \frac{|u|^{2^*_{s}(t)}}{|x|^{t}}dx + \int_{\mathbb{R}^N}  \frac{|v|^{2^*_{s}(t)}}{|x|^{t}} dx\right)< 0.
\end{split}
\end{equation*}
In addition, the trivial pair $(0,0)$ is a strict minimum of $J_\nu$ on $\mathcal{N}_\nu$ since
\begin{equation*}
 \mathcal{J}_\nu''(0,0)[\varphi_1,\varphi_2]^2=\|(\varphi_1,\varphi_2)\|^2_{\mathbb{D}}>0 \quad \text{ for any } (\varphi_1,\varphi_2)\in \mathcal{N}_\nu.
\end{equation*}
Indeed, $(0,0)$ is an isolated point respect to $\displaystyle \mathcal{N}_\nu  \, \cup \, \{(0,0)\}$. As a consequence, $\mathcal{N}_\nu$ is a smooth complete manifold of codimension $1$. Also, there exists  $r_\nu>0$ such that
\begin{equation}\label{cp2}
\|(u,v)\|_{\mathbb{D}} > r_\nu\quad\text{for all } (u,v)\in \mathcal{N}_\nu.
\end{equation}
In the lines of \eqref{NN2}, observe that the functional $J_\nu$ can be also written as 
\begin{equation}\label{Nnueq}
\begin{split}
\mathcal{J}_{\nu}{\big|}_{\mathcal{N}_\nu} (u,v) =& \left( \frac{1}{2}-\frac{1}{\alpha+\beta} \right) \|(u,v)\|^2_{\mathbb{D}} \\
&+ \left( \frac{1}{\alpha+\beta}-\frac{1}{2^*_{s}(t)} \right)\left(\int_{\mathbb{R}^N} \frac{|u|^{2^*_{s}(t)}}{|x|^{t}}dx+\int_{\mathbb{R}^N} \frac{|v|^{2^*_{s}(t)}}{|x|^{t}} dx\right).
\end{split}
\end{equation}
Thus, by \eqref{cp2} and  \eqref{alphabeta} it follows that
\begin{equation*}
\mathcal{J}_{\nu} (u,v) > \left( \frac{1}{2}-\frac{1}{\alpha+\beta} \right) r^2_\nu\quad  \quad\text{for all } (u,v)\in \mathcal{N}_\nu.
\end{equation*}
Therefore, $\mathcal{J}_{\nu}$ remains bounded from below on $\mathcal{N}_\nu$. Finally, it is easy to see that
$(u,v) \in \mathbb{D}$ is a critical point of $\mathcal{J}_\nu$ if and only if $(u,v) \in \mathbb{D}$ is a critical point of $\mathcal{J}_{\nu}{\big|}_{\mathcal{N}_\nu}$. Consequently, $\mathcal{N}_\nu$ is a called a natural constraint for $J_\nu$. Hence, we can look for solutions to \eqref{system} by minimizing $J_\nu$ on the Nehari manifold $\mathcal{N}_\nu$.
\begin{definition}
We said that $(u,v)\in\mathbb{D}\setminus \{(0,0)\}$ is a non-trivial bound state for \eqref{system} if it is a non-trivial critical point of $\mathcal{J}_\nu$. 
A non-trivial and non-negative bound state $(\tilde{u},\tilde{v})$ is said to be a ground state if its energy is minimal, namely
\begin{equation}\label{cmin}
\tilde{c}_\nu\vcentcolon=\mathcal{J}_\nu(\tilde{u},\tilde{v})=\min\left\{\mathcal{J}_\nu(u,v): (u,v)\in \mathbb{D}\setminus \{(0,0)\},\; u,v\ge 0 \mbox{ and } \mathcal{J}_\nu'(u,v)=0\right\}.
\end{equation}
\end{definition}

\subsection{Semi-trivial solutions}\hfill\newline
Our analysis strongly relies on some variational properties of the semi-trivial solutions that we introduce next. Initially, we characterize the nature as a critical points of $J_\nu$ of the semi-trivial solutions $(z_\mu^{(1)},0)$ and $(0,z_\mu^{(2)})$: they become either a local minimum or a saddle point of $\mathcal{J}_{\nu}{\big|}_{\mathcal{N}_\nu}$, given certain assumptions on the coupling parameter $\nu$ and the exponents $\alpha$ and $\beta$. This classification allows us to examine the geometry of the functional $J_\nu$ and then, to obtain essential energy estimates, necessary for prove the existence of solutions. As we will see, the coupling parameter $\nu$ and the exponents $\alpha$ and $\beta$ have a subtle impact on the geometry of $J_\nu$: if $\nu$ is large enough, the minimum energy level is strictly lower than that of the semi-trivial solutions. Consequently, one can find a positive ground state solution by minimizing the functional $J_\nu$.

For any $\nu \in \mathbb{R}$, the system \eqref{system} admits two \textit{semi-trivial} positive solutions $(z^{\lambda_1,t,s},0)$ and $(0,z^{\lambda_2,t,s})$, where $z^{\lambda,t,s}$ satisfies the equation
\begin{equation}\label{eq:uncoupled}\tag{$E_{t,s}$}
 (-\Delta)^{s} z - \lambda_j \frac{z}{|x|^{2s}}=\frac{z^{2^*_{s}(t)-1}}{|x|^{t}},\qquad \text{and}\ z>0\ \text{in}\ \mathbb{R}^N\backslash\{0\}.
\end{equation}
It is easy to see that \eqref{eq:uncoupled} invariant under the scaling \[z^{\lambda_j,t,s}_\mu (x) =\mu^{-\frac{N-2s}{2}} z^{\lambda_j,t,s}\left(\frac{x}{\mu} \right).\] Because of because of \cite{Ghoussoub2018,Ghoussoub2015}, solutions to \eqref{eq:uncoupled} arise as minimizers of the Rayleigh quotient,
\begin{equation}\label{S1}
\mathcal{S}(\lambda,t,s)=\inf\limits_{\substack{u\in \mathcal{D}^{s,2}(\mathbb{R}^N)\\
u\not\equiv0}}\frac{\|u\|_{\lambda,s}^2}{\displaystyle\left(\int_{\mathbb{R}^N}\frac{|u|^{2_s^*(t)}}{|x|^t}dx\right)^{\frac{2}{2_s^*(t)}}}=\frac{\|z_{\mu}^{^{\lambda_j,t,s}}\|_{\lambda,s}^2}{\displaystyle \left(\int_{\mathbb{R}^N}\frac{|z_{\mu}^{^{\lambda_j,t,s}}|^{2_s^*(t)}}{|x|^t}dx\right)^{\frac{2}{2_s^*(t)}}}.
\end{equation}
The constant $\mathcal{S}(\lambda,t,s)$ is finite, strictly positive and achieved by the function $z^{^{\lambda,t,s}}_\mu$ solution to \eqref{eq:uncoupled} with $\lambda_j=\lambda$ (cf. \cite{Ghoussoub2018}). By direct computation we easily get
\begin{equation}\label{relation}
\|z_{\mu}^{^{\lambda,t,s}}\|_{\lambda,s}^2=\int_{\mathbb{R}^N}\frac{|z_{\mu}^{^{\lambda,t,s}}|^{2_s^*(t)}}{|x|^t}dx=[\mathcal{S}(\lambda,t,s)]^{\frac{N-t}{2s-t}}.
\end{equation}
Let us stress that, for $\lambda=0$, the constant $\mathcal{S}(0,t,s)$ is the best constant for the fractional Hardy-Sobolev inequality
\begin{equation}\label{HS_ineq}
\mathcal{S}(0,t,s) \left(\int_{\mathbb{R}^N} \frac{u^{2_s^*(t)}}{|x|^t} \, dx \right)^{\frac{2}{2_s^*(t)}}\leq \iint_{\mathbb{R}^{2N}}\frac{|u(x)-u(y)|^2}{|x-y|^{N+2s}}dydx,
\end{equation}
while, for $\lambda\in(0,\Lambda_{N,s})$,
\begin{equation}\label{HS_ineq_l}
\mathcal{S}(\lambda,t,s) \left(\int_{\mathbb{R}^N} \frac{u^{2_s^*(t)}}{|x|^t} \, dx \right)^{\frac{2}{2_s^*(t)}}\leq \iint_{\mathbb{R}^{2N}}\frac{|u(x)-u(y)|^2}{|x-y|^{N+2s}}dydx-\lambda\int_{\mathbb{R}^N}\frac{u^2}{|x|^{2s}}dx.
\end{equation}
If $\lambda=t=0$, then $\mathcal{S}(0,0,s)$ reduces to the fractional Sobolev constant which is known to be achieved by the family of Aubin--like instatons
\[u(x)=\frac{C_{N,s}}{(1+|x|^2)^{\frac{N-2s}{2}}},\]
for some constant $C_{N,s}>0$. Contrary to the case $s=1$, no explicit solution is known to \eqref{eq:uncoupled} for $0<s<1$ and $\lambda,t>0$. Actually, if $s=1$, a complete classification of solutions to \eqref{eq:uncoupled} was provided in \cite{Kang2004}: if $\lambda\in\left(0,\Lambda_{N,1}\right)$, the solutions to $(E_{t,1})$ are given by
\begin{equation}\label{Keq}
z_{\mu}^{^{\lambda_j,t,1}}(x)= \mu^{-\frac{N-2}{2}}z^{^{\lambda_j,t,1}}\left(\frac{x}{\mu}\right) \qquad \mbox{ with } \qquad z^{^{\lambda,t,1}}(x)=\dfrac{A(N,\lambda)^{\frac{N-2}{2(2-t)}}}{|x|^{a_{\lambda}}\left(1+|x|^{(2-t)(1-\frac{2}{N-2}a_{\lambda})}\right)^{\frac{N-2}{2-t}}},
\end{equation}
where $\displaystyle A(N,\lambda)=2(\Lambda_N-\lambda)\frac{N-t}{\sqrt{\Lambda_N}}$,  $a_\lambda=\sqrt{\Lambda_N}-\sqrt{\Lambda_N-\lambda}$. Moreover,
\begin{equation*}
\mathcal{S}(\lambda,t,1)=4(\Lambda_N-\lambda)\frac{N-t}{N-2}\left(\frac{N-2}{2(2-t)\sqrt{\Lambda_N-\lambda}}\frac{2\pi^{\frac{N}{2}}}{\Gamma\left(\frac{N}{2}\right)}\frac{\Gamma^2\left(\frac{N-t}{2-t}\right)}{\Gamma\left(\frac{2(N-t)}{2-t}\right)}\right)^{\frac{2-t}{N-t}}.
\end{equation*}
Observe that the constant $\mathcal{S}(\lambda,t,1)$ is decreasing in both $\lambda$ and $t$ so that the order between $\lambda_1$ and $\lambda_2$ determines the order between the energy of the associated semi-trivial solutions. Moreover
\[\lim\limits_{t\to2^-}\mathcal{S}(0,t,1)=\Lambda_{N,1}.\]
 To lighten the notation, in what follows we denote \[z^{(j)}_\mu\vcentcolon=z_{\mu}^{\lambda_j,t,s},\] and we denote by $L^{2_s^*(t),t}(\mathbb{R}^N)$ the space of measurable functions $u:\mathbb{R}^N\to\mathbb{R}$ such that
 \[ \| u\|_{L^{2_s^*(t),t}(\mathbb{R}^N)}\vcentcolon=\left(\int_{\mathbb{R}^N} \frac{|u|^{2^*_{s}(t)}}{|x|^{t}} \, dx\right)^{\frac{1}{2_s^*(t)}}<+\infty.\]
Note that the main difficulties of the work are due to the lack of compactness of the embedding $\mathcal{D}^{s,2}(\mathbb{R}^N)\hookrightarrow L^{q,t}(\mathbb{R}^N)$, with $L^{q,t}(\mathbb{R}^N)$ the space of measurable functions $u:\mathbb{R}^N\to\mathbb{R}$ such that
 \[ \| u\|_{L^{q,t}(\mathbb{R}^N)}=\left(\int_{\mathbb{R}^N} \frac{|u|^{q}}{|x|^{t}} \, dx\right)^{\frac{1}{q}}<+\infty.\]
In particular, $\mathcal{D}^{s,2}(\mathbb{R}^N)\hookrightarrow L^{q,t}(\mathbb{R}^N)$ is compact for $1\leq q< 2_s^*(t)$ and continuous up to the critical exponent $2_s^*(t)$. To continue, let us consider the decoupled energy functionals $\mathcal{J}_j:\mathcal{D}^{s,2} (\mathbb{R}^N)\mapsto\mathbb{R}$,
\begin{equation}\label{funct:Ji}
\mathcal{J}_j(u) =\frac{1}{2}\|u\|_{\lambda_j,s}^2 - \frac{1}{2^*_{s}(t)} \int_{\mathbb{R}^N} \frac{|u|^{2^*_{s}(t)}}{|x|^{t}} \, dx.
\end{equation}
Because of \cite{Ghoussoub2018,Ghoussoub2015}, the function $z_{\mu}^{(j)}$, $\mu>0$, is a global minimum of $\mathcal{J}_j$ on the corresponding Nehari manifold, namely,
\begin{equation}\label{Nj}
\mathcal{N}_j= \left\{ u \in \mathcal{D}^{s,2} (\mathbb{R}^N) \setminus \{0\} \, : \,  \left\langle \mathcal{J}'_j(u){\big|} u\right\rangle=0 \right\}.
\end{equation}
Because of \eqref{relation},
\begin{equation*}
\mathcal{J}_j(z_\mu^{(j)})=\dfrac{2s-t}{2(N-t)}\left[\mathcal{S}(\lambda_j,t,s)\right]^{\frac{N-t}{2s-t}},
\end{equation*}
so that the energy levels of the \textit{semi-trivial} solutions are given by
\begin{equation}\label{csemi}
\mathscr{C}(\lambda_1,t,s)\vcentcolon=\mathcal{J}_\nu(z_\mu^{(1)},0)=\mathcal{J}_1(z_\mu^{(1)})=\dfrac{2s-t}{2(N-t)}\left[\mathcal{S}(\lambda_1,t,s)\right]^{\frac{N-t}{2s-t}}
\end{equation}
and
\begin{equation}\label{csemi2}
\mathscr{C}(\lambda_2,t,s)\vcentcolon=\mathcal{J}_\nu(0,z_\mu^{(2)})=\mathcal{J}_2(z_\mu^{(2)})=\dfrac{2s-t}{2(N-t)}\left[\mathcal{S}(\lambda_2,t,s)\right]^{\frac{N-t}{2s-t}}.
\end{equation}
Next, in the lines of \cite[Theorem 2.2]{Abdellaoui2009}, we characterize the variational nature of the \textit{semi-trivial} couples on $\mathcal{N}_\nu$.
\begin{proposition}\label{prop_semi} Under hypotheses \eqref{alphabeta} and \eqref{H}, the following holds:
\begin{enumerate}
\item[i)] If $\alpha>2$ or $\alpha=2$ and $\nu$ small enough, then $(0,z_\mu^{(2)})$ is a local minimum of $\mathcal{J}_{\nu}{\big|}_{\mathcal{N}_\nu}$.
\item[ii)] If $\beta>2$ or $\beta=2$ and $\nu$ small enough, then $(z_\mu^{(1)},0)$ is a local minimum of $\mathcal{J}_{\nu}{\big|}_{\mathcal{N}_\nu}$.
\item[iii)] If $\alpha<2$ or $\alpha=2$ and $\nu$ large enough, then $(0,z_\mu^{(2)})$ is a saddle point for $\mathcal{J}_{\nu}{\big|}_{\mathcal{N}_\nu}$.
\item[iv)] If $\beta<2$ or $\beta=2$ and $\nu$ large enough, then $(z_\mu^{(1)},0)$ is a saddle point for $\mathcal{J}_{\nu}{\big|}_{\mathcal{N}_\nu}$.
\end{enumerate}
\end{proposition}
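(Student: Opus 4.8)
The plan is to study the sign of $\mathcal{J}_\nu(u,v)-\mathscr{C}(\lambda_2,t,s)$ for $(u,v)\in\mathcal{N}_\nu$ in a neighborhood of the semi-trivial point $P:=(0,z_\mu^{(2)})$; items i) and iii) follow from this, while ii) and iv) are obtained verbatim by exchanging the two components and the roles of $\alpha$ and $\beta$. Throughout I abbreviate $2^*:=2_s^*(t)$, $D:=\frac{2s-t}{2(N-t)}$, $A(w):=\int_{\mathbb{R}^N}\frac{|w|^{2^*}}{|x|^t}dx$, $B(u,v):=\int_{\mathbb{R}^N}h\frac{|u|^\alpha|v|^\beta}{|x|^t}dx\ge 0$, and $K:=\|z_\mu^{(2)}\|_{\lambda_2,s}^2=A(z_\mu^{(2)})=\mathcal{S}(\lambda_2,t,s)^{\frac{N-t}{2s-t}}$, so that $\mathscr{C}(\lambda_2,t,s)=DK$ by \eqref{csemi2} and \eqref{relation}. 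The two structural facts driving the argument are the on-shell energy representation \eqref{NN2}, namely $\mathcal{J}_\nu(u,v)=D\bigl(A(u)+A(v)\bigr)+\nu\frac{\alpha+\beta-2}{2}B(u,v)$ on $\mathcal{N}_\nu$, and the bound $B(u,v)\le \|h\|_{L^{\frac{2^*}{2^*-\alpha-\beta},t}}\,A(u)^{\alpha/2^*}A(v)^{\beta/2^*}$, which is a consequence of \eqref{H} together with two applications of Hölder's inequality in the weighted spaces. The decisive point is that $B$ is homogeneous of degree $\alpha$ in the first component, so its size relative to the quadratic term is governed precisely by the sign of $\alpha-2$.

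For iii) I would exhibit a direction of strict decrease. Fix $\varphi:=z_\mu^{(1)}$ (any $\varphi\gneq 0$ with $B_0:=B(\varphi,z_\mu^{(2)})>0$ works, and $B_0>0$ because $h\gneq 0$ and $z_\mu^{(1)},z_\mu^{(2)}>0$) and let $\rho(\tau)>0$, smooth for small $\tau$ by the implicit function theorem (the constraint is transverse since $\partial_\rho[\rho^2K-\rho^{2^*}K]|_{\rho=1}=(2-2^*)K\neq 0$), be the unique scaling with $\gamma(\tau):=(\tau\varphi,\rho(\tau)z_\mu^{(2)})\in\mathcal{N}_\nu$ and $\rho(0)=1$. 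Inserting the expansion of $\rho(\tau)$ read off from $\Psi(\gamma(\tau))=0$ into \eqref{NN2}, and using the cancellation $D\frac{2^*}{2^*-2}=\frac12$, gives $\mathcal{J}_\nu(\gamma(\tau))-\mathscr{C}(\lambda_2,t,s)=\frac12\|\varphi\|_{\lambda_1,s}^2\,\tau^2-\nu B_0\,\tau^\alpha+o(\tau^{\min\{2,\alpha\}})$. If $\alpha<2$ the term $-\nu B_0\tau^\alpha$ dominates; if $\alpha=2$ the coefficient is $\frac12\|\varphi\|_{\lambda_1,s}^2-\nu B_0$, negative once $\nu>\frac{\|\varphi\|_{\lambda_1,s}^2}{2B_0}$, i.e. for $\nu$ large. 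In either case $\mathcal{J}_\nu(\gamma(\tau))<\mathscr{C}(\lambda_2,t,s)$ for small $\tau>0$, so $P$ is not a local minimiser. Since $\{0\}\times\mathcal{N}_2\subset\mathcal{N}_\nu$ and $\mathcal{J}_\nu$ restricted there equals $\mathcal{J}_2|_{\mathcal{N}_2}\ge\mathscr{C}(\lambda_2,t,s)$, with strict inequality at points of $\mathcal{N}_2$ near $z_\mu^{(2)}$ but off its scaling orbit, $P$ is not a local maximiser either; being a critical point, it is a saddle.

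For i) I would show $\mathcal{J}_\nu\ge\mathscr{C}(\lambda_2,t,s)$ on $\mathcal{N}_\nu$ near $P$. Writing $x:=A(u)$, $y:=A(v)$, $b:=B(u,v)$, representation \eqref{NN2} reduces the claim to $x+y+\lambda_*b\ge K$ with $\lambda_*:=\frac{\nu(\alpha+\beta-2)}{2D}>0$. Applying the Hardy--Sobolev inequality \eqref{HS_ineq_l} to each component and combining with the Nehari identity \eqref{Nnueq1} yields $\mathcal{S}(\lambda_1,t,s)\,x^{2/2^*}+\mathcal{S}(\lambda_2,t,s)\,y^{2/2^*}\le\|u\|_{\lambda_1,s}^2+\|v\|_{\lambda_2,s}^2=x+y+\nu(\alpha+\beta)b$, whence $x+y+\lambda_*b\ge\mathcal{S}(\lambda_1,t,s)\,x^{2/2^*}+\mathcal{S}(\lambda_2,t,s)\,y^{2/2^*}-C b$ for some constant $C$. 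Using $\mathcal{S}(\lambda_2,t,s)K^{2/2^*}=K$ (near-extremality of $z_\mu^{(2)}$), the same chain of inequalities controls the second-component deficit by $K-\mathcal{S}(\lambda_2,t,s)y^{2/2^*}\lesssim (K-y)_+\lesssim x^{\alpha/2^*}$, while the weighted Hölder bound gives $b\lesssim x^{\alpha/2^*}$; hence $x+y+\lambda_*b-K\ge\mathcal{S}(\lambda_1,t,s)\,x^{2/2^*}-C' x^{\alpha/2^*}$. For $\alpha>2$ one has $x^{\alpha/2^*}=o(x^{2/2^*})$, so the right-hand side is nonnegative once $\|u\|_{\lambda_1,s}$ is small; for $\alpha=2$ the two powers coincide and the negative contribution carries a factor proportional to $\nu$, so positivity holds for $\nu$ small. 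This makes $P$ a local minimiser.

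The main obstacle is the coupling term. When $\alpha<2$ the map $u\mapsto B(u,v)$ is not twice differentiable at $u=0$, so the classification cannot rest on a naive second-order Taylor expansion of $\mathcal{J}_\nu|_{\mathcal{N}_\nu}$; one must instead track the exact homogeneity degree $\alpha$ of $B$ against the quadratic term, which is what the expansion in the saddle case and the power counting in the local-minimum case do. For the local minimum the delicate point is uniformity over \emph{all} admissible perturbations: the loss $K-A(v)$ in the second component and the coupling $B$ must be bounded simultaneously by the correct power of $A(u)$, and both estimates rely on hypothesis \eqref{H} through the weighted Hölder bound on $B$ and on the saturation of \eqref{HS_ineq_l} by $z_\mu^{(2)}$. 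I expect these two ingredients, rather than any single computation, to be where the argument must be made rigorous.
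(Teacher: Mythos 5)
Your proposal is correct. For the saddle-point items it is essentially the paper's argument: the paper also builds a one-parameter Nehari path through $(0,z_\mu^{(2)})$ (parametrized as $(f(\tau)\tau\varphi,f(\tau)z_\mu^{(2)})$ rather than your $(\tau\varphi,\rho(\tau)z_\mu^{(2)})$, which makes no difference), arrives at the same expansion $\tfrac12\|\varphi\|_{\lambda_1,s}^2\tau^2-\nu B_0|\tau|^\alpha+o(|\tau|^{\min\{2,\alpha\}})$, and combines the decreasing direction with local minimality along $\{0\}\times\mathcal{N}_2$. For the local-minimum items your route is genuinely different. The paper takes an arbitrary $(\varphi,z_\mu^{(2)}+\psi)\in\mathcal{N}_\nu$, projects the second component back onto $\mathcal{N}_2$ by a scalar $\overline t$, and reduces everything to $\mathcal{J}_2(\overline t(z_\mu^{(2)}+\psi))\ge\mathcal{J}_2(z_\mu^{(2)})$ together with the identity $\mathcal{J}_\nu(\varphi,z_\mu^{(2)}+\psi)-\mathcal{J}_\nu(0,\overline t(z_\mu^{(2)}+\psi))=\tfrac12(1+o(1))\|\varphi\|_{\lambda_1,s}^2$. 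You instead work scalar-wise with $x=A(u)$, $y=A(v)$, $b=B(u,v)$, using only the Nehari identity \eqref{Nnueq1}, the Hardy--Sobolev inequality \eqref{HS_ineq_l} on each component, and the weighted H\"older bound \eqref{Holder1}; this avoids the projection onto $\mathcal{N}_2$, but it hinges on the one step you assert without proof, namely $(K-y)_+\lesssim \nu x^{\alpha/2_s^*(t)}$. That step is true and should be made explicit, since it is where the extremality of $z_\mu^{(2)}$ enters your version: the function $g(y)=\mathcal{S}(\lambda_2,t,s)\,y^{2/2_s^*(t)}-y$ vanishes at $y=K$ with $g'(K)=2/2_s^*(t)-1<0$, and the Nehari identity combined with $\mathcal{S}(\lambda_1,t,s)\,x^{2/2_s^*(t)}\ge x$ for $x$ small yields $g(y)\le \nu(\alpha+\beta)b$, so the second-component deficit is controlled by the coupling term alone; inserting this gives exactly your final inequality $x+y+\lambda_* b-K\ge \mathcal{S}(\lambda_1,t,s)\,x^{2/2_s^*(t)}-C'x^{\alpha/2_s^*(t)}$, whose power count settles both $\alpha>2$ and $\alpha=2$ with $\nu$ small. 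The trade-off is that the paper's projection packages the second-component bookkeeping into the single fact that $z_\mu^{(2)}$ minimizes $\mathcal{J}_2$ on $\mathcal{N}_2$, while your version makes the quantitative dependence on $\nu$ and $\|h\|$ explicit at the cost of the extra deficit estimate.
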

\begin{proof}
Let us prove $i)$. Take $(\varphi,z_\mu^{2}+\psi)\in\mathcal{N}_\nu$ with $\mu>0$ and $\alpha>2$ and note that, by \eqref{Nnueq1}, 
\begin{equation}\label{Nnueq1bis}
\begin{split}
\|(\varphi,z_\mu^{2}+\psi)\|^2_{\mathbb{D}}=&\int_{\mathbb{R}^N} \frac{|\varphi|^{2^*_{s}(t)}}{|x|^{t}}dx\! + \! \int_{\mathbb{R}^N}\frac{|z_\mu^{(2)}+\psi|^{2^*_{s}(t)}}{|x|^{t}} dx\! + \! (\alpha+\beta)\nu \int_{\mathbb{R}^N} h \frac{|\varphi|^{\alpha} |z_\mu^{(2)}+\psi|^{\beta}}{|x|^t} dx.
\end{split}
\end{equation}
Next, take $\overline{t}$ such that $\overline{t}(z_\mu^{(2)}+\psi)\in \mathcal{N}_{2}$, with $\mathcal{N}_{2}$ as in \eqref{Nj}. Actually, by using the definition of $\mathcal{N}_2$ and \eqref{Nnueq1bis}, the value of $t$ is determined by  the following expression (see \eqref{normH}),
{\tiny
\begin{equation}\label{tminmax}
\begin{split}
 \overline{t}&=\left(\dfrac{\|z_\mu^{(2)}+\psi\|^2_{\lambda_2,s}}{\displaystyle{\int_{\mathbb{R}^N}\frac{|z_\mu^{(2)}+\psi|^{2^*_{s}(t)}}{|x|^{t}}}dx} \right)^{\frac{1}{2^*_{s}(t)-2}}= \left(1-\dfrac{\|\varphi\|^2_{\lambda_1}-\displaystyle\int_{\mathbb{R}^N} \frac{|\varphi|^{2^*_{s}(t)}}{|x|^{t}} \, dx - (\alpha+\beta)\nu \int_{\mathbb{R}^N} h(x) \frac{|\varphi|^{\alpha} |z_\mu^{(2)}+\psi|^{\beta}}{|x|^t} dx}{\displaystyle\int_{\mathbb{R}^N}\dfrac{|z_\mu^{(2)}+\psi|^{2^*_{s}(t)}}{|x|^{t}}} \right)^{\frac{1}{2^*_{s}(t)-2}}.
\end{split}
\end{equation}
}
Because of the hypothesis \eqref{H} on the function $h$, { using twice the Holder's inequality with exponents $p=\dfrac{2_{s}^*(t)}{2_{s}^*(t)-\alpha-\beta}$ and  $q=\dfrac{2_{s}^*(t)}{\alpha+\beta}$ and $\tilde p = \dfrac{2_{s}^*(t)}{\alpha}$ and $\tilde{q} = \dfrac{2_{s}^*(t)}{\beta}$ respectively, we have
\begin{equation}\label{Holder1}
\begin{split}
 \int_{\mathbb{R}^N} h(x) \frac{|\varphi|^{\alpha} |z_\mu^{(2)}+\psi|^{\beta}}{|x|^t}dx & \leq \left(\int_{\mathbb{R}^N} \dfrac{h^{\frac{2_{s}(t)^*}{2_{s}^*(t)-\alpha-\beta}}}{|x|^{t}} \right)^{\frac{2_{s}^*(t)-\alpha-\beta}{2_{s}^*(t)}} \left(\int_{\mathbb{R}^N} \dfrac{(|\varphi|^\alpha |z_\mu^{(2)}+\psi|^\beta)^{\frac{2_{s}^*(t)}{\alpha+\beta}}}{|x|^t} \right)^\frac{\alpha+\beta}{2_{s}^*(t)} \\ 
&\leq C(h)  \left( \int_{\mathbb{R}^N} \frac{|\varphi|^{2^*_{s}(t)}}{|x|^{t}} dx\right)^{\frac{\alpha}{2^*_{s}(t)}}  \left( \int_{\mathbb{R}^N} \frac{|z_\mu^{(2)}+\psi|^{2^*_{s}(t)}}{|x|^{t}} dx\right)^{\frac{\beta}{2^*_{s}(t)}},
\end{split}
\end{equation}
}
By combining \eqref{tminmax} and \eqref{Holder1}, we infer
\begin{equation}\label{texpan1}
\overline{t}^{2}= 1-\frac{2}{2^*_{s}(t)-2}\frac{\|\varphi \|^{2}_{\lambda_1,s}(1+o(1))}{\displaystyle\int_{\mathbb{R}^N}\dfrac{|z_\mu^{(2)}+\psi|^{2^*_{s}(t)}}{|x|^{t}}dx}, \qquad \mbox{ as } \|(\varphi,\psi)\|_{\mathbb{D}}\to 0.
\end{equation}
\begin{equation}\label{texpan2}
\overline{t}^{2^*_{s}(t)}= 1-\frac{2^*_{s}(t)}{2^*_{s}(t)-2}\frac{\|\varphi \|^{2}_{\lambda_1,s}(1+o(1))}{\displaystyle\int_{\mathbb{R}^N}\dfrac{|z_\mu^{(2)}+\psi|^{2^*_{s}(t)}}{|x|^{t}}dx}, \qquad \mbox{ as } \|(\varphi,\psi)\|_{\mathbb{D}}\to 0.
\end{equation}
As the decoupled energy functional $\mathcal{J}_2$ achieves its minimum in $z_\mu^{(2)}$, then
\begin{equation*}
\mathcal{J}_2(\overline{t}(z_\mu^{(2)}+\psi))-\mathcal{J}_2(z_\mu^{(2)})=\mathcal{J}_\nu(0,\overline{t}(z_\mu^{(2)}+\psi))-\mathcal{J}_\nu(0,z_\mu^{(2)}) \ge 0.
\end{equation*}
Next, comparing the energy of $(\varphi,z_\mu^{(2)}+\psi)$ and $(0,\overline{t}(z_\mu^{(2)}+\psi))$, from \eqref{Nnueq1bis}, \eqref{tminmax}, \eqref{texpan1} and \eqref{texpan2}, we obtain
\begin{equation}\label{compar}
\begin{split}
 \mathcal{J}_\nu(\varphi,z_\mu^{(2)}+\psi)\!-\!\mathcal{J}_\nu(0,\overline{t}(z_\mu^{(2)}+\psi))=&\frac{1}{2} \| \varphi\|_{\lambda_1,s}^2+\frac{1}{2}(1-\overline{t}^2)\|z_\mu^{(2)}+\psi\|_{\lambda_2,s}^2\\
 &-\frac{1}{2_{s}^*(t)}\int_{\mathbb{R}^N}\dfrac{| \varphi|^{2^*_{s}(t)}}{|x|^{t}}dx\\
 &+\frac{1}{2_{s}^*(t)}(1-\overline{t}^{2^*_{s}(t)}) \int_{\mathbb{R}^N}\dfrac{|z_\mu^{(2)}+\psi|^{2^*_{s}(t)}}{|x|^{t}}dx\\
& - \nu \int_{\mathbb{R}^N} h \frac{|\varphi|^{\alpha} |z_\mu^{(2)}+\psi|^{\beta}}{|x|^t}dx \\
=& \frac{1}{2} (1+o(1)) \| \varphi\|_{\lambda_1,s}^2,\qquad \mbox{ as } \|(\varphi,\psi)\|_{\mathbb{D}}\to 0.
\end{split}
\end{equation}
As a consequence, $\mathcal{J}_\nu(\varphi,z_\mu^{(2)}+\psi)-\mathcal{J}_\nu(0,z_\mu^{(2)}))\ge 0.$
Taking the perturbation $(\varphi,z_\mu^{(2)}+\psi)$ small enough in the $\mathbb{D}$-norm sense, we conclude that $(0,z_\mu^{(2)})$ is a local minimum of $\mathcal{J}_\nu$ on $\mathcal{N}_\nu$. In the case $\alpha=2$, from \eqref{compar} and \eqref{Holder1}, we get
\begin{equation*}
\begin{split}
\mathcal{J}_\nu(\varphi,z_\mu^{(2)}+\psi)-\mathcal{J}_\nu(0,\overline{t}(z_\mu^{(2)}+\psi)) &= \frac{1}{2} (1+o(1)) \| \varphi\|_{\lambda_1,s}^2 - \nu \int_{\mathbb{R}^N} h \frac{|\varphi|^{2} |z_\mu^{(2)}+\psi|^{\beta}}{|x|^t}dx \\
&= \left( \frac{1}{2}-\nu C'+o(1) \right) \, \| \varphi\|_{\lambda_1,s}^2,\qquad \mbox{ as } \|(\varphi,\psi)\|_{\mathbb{D}}\to 0.
\end{split}
\end{equation*}
Then for $\nu$ sufficiently small, we conclude that $(0,z_\mu^{(2)})$ is a local minimum of $\mathcal{J}_{\nu}$ in ${\mathcal{N}_\nu}$. Item $ii)$ follows analogously.

Next, we prove $iii)$. Take $f(\tau)$ such that $\displaystyle
(f(\tau)\tau\varphi,f(\tau)z_\mu^{(2)})\in\mathcal{N}_\nu$, with $\varphi\in \mathcal{D}^{s,2}(\mathbb{R}^N)\setminus\{0\}$ and $\mu>0$. Hence, due to \eqref{Nnueq1},
\begin{equation*}
\begin{split}
\tau^2\|\varphi\|^2_{\lambda_1}+\|z_\mu^{(2)}\|^2_{\lambda_2}
 =&[f(\tau)]^{2^*_{s}(t)-2}|\tau|^{2^*_{s}(t)}\left( \int_{\mathbb{R}^N}\dfrac{| \varphi|^{2^*_{s}(t)}}{|x|^{t}}dx+\int_{\mathbb{R}^N}\dfrac{| z_\mu^{(2)}|^{2^*_{s}(t)}}{|x|^{t}}dx \right)\\
& + (\alpha+\beta)\nu[f(\tau)]^{\alpha+\beta-2}|\tau|^{\alpha}\int_{\mathbb{R}^N} h(x) \frac{|\varphi|^{\alpha} |z_\mu^{(2)}|^{\beta}}{|x|^t}dx.
\end{split}
\end{equation*}
Observe that $f(0)=1$. In addition, because of the Implicit Function Theorem, we have $f \in C^1(\mathbb{R})$ and
{\small
\begin{equation*}
f'(\tau)=\frac{\displaystyle 2\tau\|\varphi\|^2_{\lambda_1,s} - 2^*_{s}(t)f^{2^*_{s}(t)-2}|\tau|^{2^*_{s}(t)-2}\tau \int_{\mathbb{R}^N}\dfrac{| \varphi|^{2^*_{s}(t)}}{|x|^{t}}dx - \alpha (\alpha+\beta)\nu f^{\alpha+\beta-2}|\tau|^{\alpha-2}\tau\int_{\mathbb{R}^N} h \frac{|\varphi|^{\alpha} |z_\mu^{(2)}|^{\beta}}{|x|^t}dx }{\displaystyle(2^*_{s}(t)-2)f^{2^*_{s}(t)-3}|t|^{2^*_{s}(t)}\left(\int_{\mathbb{R}^N}\dfrac{| \varphi|^{2^*_{s}(t)}}{|x|^{t}}dx + \int_{\mathbb{R}^N}\dfrac{| z_\mu^{(2)}|^{2^*_{s}(t)}}{|x|^{t}}dx\right) + \nu \delta f^{\alpha+\beta-3}|\tau|^{\alpha}\int_{\mathbb{R}^N} h \frac{|\varphi|^{\alpha} |z_\mu^{(2)}|^{\beta}}{|x|^t}dx},
\end{equation*}
}
where $\delta=(\alpha+\beta)(\alpha+\beta-2)$. Since $\alpha<2$, we can write the previous expression as 
\begin{equation*}
f'(\tau)=\dfrac{\displaystyle- \alpha (\alpha+\beta)\nu\int_{\mathbb{R}^N} h(x) \frac{|\varphi|^{\alpha} |z_\mu^{(2)}|^{\beta}}{|x|^t}dx }{\displaystyle (2^*_{s}(t)-2) \int_{\mathbb{R}^N}\dfrac{| z_\mu^{(2)}|^{2^*_{s}(t)}}{|x|^{t}}dx }\, |\tau|^{\alpha-2}\tau\, \, (1+o(1)), \qquad \mbox{ as }  \tau\to 0.
\end{equation*}
By integrating and using that $f(0)=1$, we obtain
\begin{equation*}
f(\tau)= 1- \frac{\displaystyle (\alpha+\beta)\nu\int_{\mathbb{R}^N} h(x) \frac{|\varphi|^{\alpha} |z_\mu^{(2)}|^{\beta}}{|x|^t}dx }{\displaystyle (2^*_{s}(t)-2) \int_{\mathbb{R}^N}\frac{| z_\mu^{(2)}|^{2^*_{s}(t)}}{|x|^{t}} dx}\, |\tau|^{\alpha} \, \, (1+o(1)), \qquad \mbox{ as }  \tau\to 0,
\end{equation*}
and consequently, by Taylor expansion,
\begin{equation}\label{efe2star}
f^{2^*_{s}(t)}(\tau)= 1- \frac{\displaystyle (N-t)(\alpha+\beta)\nu\int_{\mathbb{R}^N} h(x)\frac{|\varphi|^{\alpha} |z_\mu^{(2)}|^{\beta}}{|x|^t} dx}{\displaystyle (2-t) \int_{\mathbb{R}^N}\dfrac{| z_\mu^{(2)}|^{2^*_{s}(t)}}{|x|^{t}} dx}\, |\tau|^{\alpha} \, \, (1+o(1)), \qquad \mbox{ as }  \tau\to 0.
\end{equation}
To conclude, we note that, by \eqref{NN2} and \eqref{efe2star},
\begin{equation*}\label{compar2}
\begin{split}
\mathcal{J}_\nu((f(\tau)&\tau\varphi,f(\tau)z_\mu^{(2)})-\mathcal{J}_\nu(0,z_\mu^{(2)})\\
&=\frac{2-t}{2(N-t)}\left([f(\tau)]^{2^*_{s}(t)}\int_{\mathbb{R}^N}\dfrac{| \varphi|^{2^*_{s}(t)}}{|x|^{t}}dx + ([f(\tau)]^{2^*_{s}(t)}-1)  \int_{\mathbb{R}^N}\dfrac{|z_\mu^{(2)}|^{2^*_{s}(t)}}{|x|^{t}}dx\right)\\
&\ + \left(\frac{\alpha+\beta-2}{2}\right)\nu |\tau|^{\alpha} \int_{\mathbb{R}^N} h \frac{|\varphi|^{\alpha} |z_\mu^{(2)}|^{\beta}}{|x|^t}dx \\
&=- \left(\frac{\alpha+\beta}{2}\right)\nu |\tau|^{\alpha}\!\! \int_{\mathbb{R}^N} h \frac{|\varphi|^{\alpha} |z_\mu^{(2)}|^{\beta}}{|x|^t}dx + \left(\frac{\alpha+\beta-2}{2}\right)\nu |\tau|^{\alpha}\!\! \int_{\mathbb{R}^N} h \frac{|\varphi|^{\alpha} |z_\mu^{(2)}|^{\beta}}{|x|^t}dx+o(|\tau|^\alpha) \\
&=- \nu |\tau|^{\alpha} \int_{\mathbb{R}^N} h \frac{|\varphi|^{\alpha} |z_\mu^{(2)}|^{\beta}}{|x|^t}dx+o(|\tau|^\alpha)\quad\text{as }\tau\to0.
\end{split}
\end{equation*}
Thus, $(0,z_\mu^{(2)})$ is a local minimum of $\mathcal{J}_\nu$ along a path lying on the Nehari manifold $\mathcal{N}_\nu$ since,  for $\tau$ small enough,
\[\mathcal{J}_\nu((f(\tau)\tau\varphi,f(\tau)z_\mu^{(2)})-\mathcal{J}_\nu(0,z_\mu^{(2)}))<0.\]
Moreover, for any pair $(0,\psi) \in \mathcal{N}_\nu$, we have $\mathcal{J}_\nu(0,\psi)>\mathcal{J}_\nu(0,z_\mu^{(2)})$, as $z_\mu^{(2)}$ is an absolute minimum for $\mathcal{J}_2$. Then, $(0,z_\mu^{(2)})$ is a local minimum in $\{0\}\times \mathcal{N}_2\subset\mathcal{N}_{\nu}$. Hence, if $\alpha<2$, the couple $(0,z_\mu^{(2)})$ is a saddle point of $\mathcal{J}_\nu$ in $\mathcal{N}_\nu$. In case of having $\alpha=2$, taking $((f(\tau)\tau\varphi,f(\tau)z_\mu^{(2)}) \in \mathcal{N}_\nu$, we get
\begin{equation*}
f'(\tau)=\frac{\displaystyle 2\|\varphi\|^2_{\lambda_1,s}- 2 (2+\beta)\nu\int_{\mathbb{R}^N} h \frac{|\varphi|^{2} |z_\mu^{(2)}|^{\beta}}{|x|^t}dx}{\displaystyle (2^*_{s}(t)-2) \int_{\mathbb{R}^N}\dfrac{| z_\mu^{(2)}|^{2^*_{s}(t)}}{|x|^{t}}dx }\, \tau \, \, (1+o(1)), \qquad \mbox{ as }  \tau\to 0,
\end{equation*}
so that
\begin{equation*}
f(\tau)=1-\frac{\displaystyle (2+\beta)\nu\int_{\mathbb{R}^N} h \frac{|\varphi|^{2} |z_\mu^{(2)}|^{\beta}}{|x|^t}dx -\|\varphi\|^2_{\lambda_1,s} }{\displaystyle (2^*_{s}(t)-2) \int_{\mathbb{R}^N}\dfrac{| z_\mu^{(2)}|^{2^*_{s}(t)}}{|x|^{t}} dx}\, \tau^2 \, \, (1+o(1)), \qquad \mbox{ as }  \tau\to 0,
\end{equation*}
and, thus,
\begin{equation*}
\mathcal{J}_\nu((f(\tau)\tau\varphi,f(\tau)z_\mu^{(2)})-\mathcal{J}_\nu(0,z_\mu^{(2)}) = \left(\frac{1}{2} \|\varphi\|_{\lambda_1,s}^2 - \nu \int_{\mathbb{R}^N} h \frac{|\varphi|^{2} |z_\mu^{(2)}|^{\beta}}{|x|^t}dx \right)\tau^2 +o(\tau^2).
\end{equation*}
Subsequently, if $\nu$ is large enough the previous quantity will be negative and $(0,z_\mu^{(2)})$ is a saddle point of $\mathcal{J}_\nu$ in $\mathcal{N}_\nu$. Therefore, reasoning as before, the thesis $iii)$ is concluded. Item $iv)$ follows similarly
\end{proof}
\begin{remark}
In case of having $\alpha+\beta=2_s^*(t)$, the constant $C(h)=\|h\|_{L^{\infty}(\mathbb{R}^N)}$ in \eqref{Holder1}. 
\end{remark}
When looking for positive solutions to system \eqref{system} it is useful to analyze the associated truncated system, namely 
\begin{equation}\label{systemp}
\left\{\begin{array}{ll}
\displaystyle (-\Delta)^s u - \lambda_1 \frac{u}{|x|^{2s}}-\frac{(u^+)^{2_{s}^*(t)-1}}{|x|^{t}}=  \nu\alpha h(x) \frac{(u^+)^{\alpha-1}\, (v^+)^{\beta}}{|x|^{t}}  &\text{in }\mathbb{R}^N,\vspace{.3cm}\\
\displaystyle (-\Delta)^s v - \lambda_2 \frac{v}{|x|^{2s}}-\frac{(v^+)^{2_{s}^*(t)-1}}{|x|^{t}}= \nu \beta h(x)  \frac{(u^+)^{\alpha}\, (v^+)^{\beta-1}}{|x|^{t}} &\text{in }\mathbb{R}^N,
\end{array}\right.
\end{equation}
where  $u^+=\max\{u,0\}$. Denoting by $u^-=\min\{u,0\}$ the negative part of the function $u$, it follows that $u=u^+ + u^-$. Moreover, by definition,  $supp(u^+)\cap supp(u^-)=\emptyset$. The system \eqref{systemp} is a variational system whose solutions correspond to critical points of
\begin{equation*}
\mathcal{J}^+_\nu (u,v)= \|(u,v)\|^2_{\mathbb{D}}- \frac{1}{2^*_{s}(t)} \int_{\mathbb{R}^N} \frac{(u^+)^{2^*_{s}(t)}}{|x|^{t}} dx - \frac{1}{2^*_{s}(t)} \int_{\mathbb{R}^N} \frac{(v^+)^{2^*_{s}(t)}}{|x|^{t}} dx -\nu \int_{\mathbb{R}^N} h \frac{(u^+)^\alpha (v^+)^{\beta}}{|x|^{t}} dx,
\end{equation*}
defined in $\mathbb{D}$. We denote by $\mathcal{N}^+_\nu$ the Nehari manifold associated to $\mathcal{J}^+_\nu $. Let us stress that, in the lines of \eqref{Nnueq1}, we have
\begin{equation} \label{N1}
\|(u,v)\|_\mathbb{D}^2=\int_{\mathbb{R}^N} \frac{(u^+)^{2^*_{s}(t)}}{|x|^{t}} dx + \int_{\mathbb{R}^N} \frac{(v^+)^{2^*_{s}(t)}}{|x|^{t}} dx +\nu (\alpha+\beta) \int_{\mathbb{R}^N} h(x) \frac{(u^+)^{\alpha} (v^+)^{\beta}}{|x|^t}  dx,
\end{equation}
for $(u,v) \in \mathcal{N}^+_\nu$. Moreover,
\begin{equation}\label{N2}
\begin{split}
\mathcal{J}^+_{\nu}{\big|}_{\mathcal{N}_\nu} (u,v) =& \left( \frac{1}{2}-\frac{1}{\alpha+\beta} \right) \|(u,v)\|^2_{\mathbb{D}}\\
& + \left( \frac{1}{\alpha+\beta}-\frac{1}{2^*_{s}(t)} \right)\left(\int_{\mathbb{R}^N} \frac{(u^+)^{2^*_{s}(t)}}{|x|^{t}}dx+ \int_{\mathbb{R}^N} \frac{(v^+)^{2^*_{s}(t)}}{|x|^{t}} dx\right).
\end{split}
\end{equation}
We conclude this section by establishing the fractional Hardy-Sobolev version of \cite[Lemma 3.3]{Abdellaoui2009} or the Hardy-Sobolev version of \cite[Lemma 3.3]{Kumar2022}. Its proof follows analogously so we omit the details. 
\begin{lemma}\label{lem_algebr}
Let $A, B>0$, $0\leq s<2$ and $\theta \ge 2$ and consider the set
\begin{equation*}
\Sigma_\nu=\{\sigma \in (0,+\infty)  \, : \, A \sigma^{\frac{2}{2_{s}^*(t)}} < \sigma + B \nu \sigma^{\frac{\theta}{2_s^*(t)}} \}.
\end{equation*}
Then, for every $\varepsilon>0$ there exists $\tilde{\nu}>0$ such that
\begin{equation*}
\inf_{\Sigma_\nu} \sigma > (1-\varepsilon) A^{\frac{N-t}{2s-t}} \qquad \mbox{ for any } 0<\nu<\tilde{\nu}.
\end{equation*}
\end{lemma}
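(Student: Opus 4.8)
The plan is to show that, for $\nu$ small, the defining inequality of $\Sigma_\nu$ \emph{fails} on a whole left-neighbourhood of the threshold value $A^{\frac{N-t}{2s-t}}$, so that $\Sigma_\nu$ is bounded below by (almost) that value. Write $p=\frac{2}{2_s^*(t)}$ and $q=\frac{\theta}{2_s^*(t)}$. Since $t<2s$ one has $2_s^*(t)=\frac{2(N-t)}{N-2s}>2$, whence $p\in(0,1)$ and $1-p=\frac{2s-t}{N-t}$; consequently $\sigma_0:=A^{\frac{1}{1-p}}=A^{\frac{N-t}{2s-t}}$ is the unique positive root of $A\sigma^{p}=\sigma$, and $A\sigma^{p}>\sigma$ for every $\sigma\in(0,\sigma_0)$. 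The hypothesis $\theta\ge2$ enters only through the inequality $q\ge p$, which will control the behaviour near the origin.

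Fix $\varepsilon\in(0,1)$. It suffices to produce $\tilde\nu>0$ such that $\Sigma_\nu\cap(0,(1-\varepsilon)\sigma_0]=\emptyset$ whenever $0<\nu<\tilde\nu$; this yields $\inf_{\Sigma_\nu}\sigma\ge(1-\varepsilon)\sigma_0$, and the strict bound in the statement then follows by running the whole argument with $\varepsilon/2$ in place of $\varepsilon$, giving $\inf_{\Sigma_\nu}\sigma\ge(1-\tfrac\varepsilon2)\sigma_0>(1-\varepsilon)\sigma_0$. Now for $\sigma\in(0,(1-\varepsilon)\sigma_0]$ the condition $\sigma\in\Sigma_\nu$, i.e. $A\sigma^{p}<\sigma+B\nu\sigma^{q}$, is equivalent to $\nu>m(\sigma)$, where
\[
m(\sigma):=\frac{A\sigma^{p}-\sigma}{B\,\sigma^{q}}=\frac{A-\sigma^{1-p}}{B}\,\sigma^{p-q}.
\]
Hence it is enough to prove that $m$ is bounded away from zero on this interval and to choose $\tilde\nu$ to be such a lower bound.

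To estimate $m$ from below I treat the two positive factors separately. On $(0,(1-\varepsilon)\sigma_0]$ one has $\sigma^{1-p}\le\big((1-\varepsilon)\sigma_0\big)^{1-p}=(1-\varepsilon)^{1-p}A$, since $1-p>0$ and $\sigma_0^{1-p}=A$; thus $A-\sigma^{1-p}\ge A\big(1-(1-\varepsilon)^{1-p}\big)>0$. For the second factor, $q\ge p$ gives $p-q\le0$, so $\sigma\mapsto\sigma^{p-q}$ is non-increasing and therefore $\sigma^{p-q}\ge\big((1-\varepsilon)\sigma_0\big)^{p-q}>0$ on the same interval. Multiplying these two positive lower bounds gives
\[
m(\sigma)\ \ge\ \frac{A\big(1-(1-\varepsilon)^{1-p}\big)}{B}\,\big((1-\varepsilon)\sigma_0\big)^{p-q}\ =:\ \tilde\nu\ >\ 0,
\]
uniformly for $\sigma\in(0,(1-\varepsilon)\sigma_0]$. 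With this choice of $\tilde\nu$, every $0<\nu<\tilde\nu$ satisfies $\nu<m(\sigma)$ throughout the interval, that is $A\sigma^{p}>\sigma+B\nu\sigma^{q}$ there, so $(0,(1-\varepsilon)\sigma_0]\cap\Sigma_\nu=\emptyset$, as required.

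I expect the only delicate point to be the behaviour of $m(\sigma)$ as $\sigma\to0^{+}$: if one had $q<p$, the factor $\sigma^{p-q}$ would vanish at the origin and $m$ could tend to $0$, permitting arbitrarily small elements of $\Sigma_\nu$ and destroying the conclusion. The assumption $\theta\ge2$ (equivalently $q\ge p$) is exactly what keeps $\sigma^{p-q}$ bounded away from $0$, and this is the crux of the estimate; everything else reduces to elementary one-variable inequalities. (Note that $\Sigma_\nu\ne\emptyset$, so $\inf_{\Sigma_\nu}\sigma$ is a genuine real number: since $\sigma>A\sigma^{p}$ for $\sigma>\sigma_0$, the entire half-line $(\sigma_0,\infty)$ is contained in $\Sigma_\nu$ for every $\nu\ge0$.)
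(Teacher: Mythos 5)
Your proof is correct and complete: the reduction to $\nu>m(\sigma)$ with $m(\sigma)=\frac{A-\sigma^{1-p}}{B}\sigma^{p-q}$, the uniform lower bound for $m$ on $(0,(1-\varepsilon)\sigma_0]$ using $1-p=\frac{2s-t}{N-t}>0$ and $q\ge p$ (which is exactly where $\theta\ge2$ enters), and the $\varepsilon/2$ trick to upgrade to a strict inequality are all sound. The paper omits its own proof, deferring to \cite[Lemma 3.3]{Abdellaoui2009} and \cite[Lemma 3.3]{Kumar2022}, and your elementary argument is essentially the standard one used there; your reading of the (evidently misprinted) hypothesis ``$0\le s<2$'' as the paper's standing assumption $0\le t<2s$, which guarantees $2_s^*(t)>2$, is the correct one.
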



\section{The Palais-Smale condition}\label{sect:Palais-Smale}
Due to the lack of compactness at the critical exponent and the consequent concentration phenomena, a careful compactness analysis for Palais-Smale sequences is performed along this section. 
\begin{definition}
Let $V$ be a Banach space. We say that $\{u_n\} \subset V$ is a PS sequence at level $c$ for an energy functional $\mathscr{F}:V\mapsto\mathbb{R}$ if
\begin{equation*}
\mathscr{F}(u_n) \to c \quad \hbox{ and }\quad  \mathscr{F}'(u_n) \to 0\quad\mbox{in}\ V'\quad \hbox{as}\quad n\to + \infty,
\end{equation*}
where $V'$ is the dual space of $V$. Moreover, we say that the functional $\mathscr{F}$ satisfies the PS condition at level $c$ if every PS sequence at $c$ for $\mathscr{F}$ has a strongly convergent subsequence.
\end{definition}
Next we state some natural properties useful in the sequel. Their proofs are somehow standard and similar to \cite[Lemma 3.2]{Colorado2022} and \cite[Lemma 3.3]{Colorado2022} (for the local case) or \cite[Lemma 3.1]{Kumar2022} and \cite[Lemma 3.1]{Kumar2022} (for the fractional case) respectively, so we omit the details.
\begin{lemma}\label{lem_PSNehari}
Let $\{(u_n,v_n)\} \subset \mathcal{N}_\nu$ be a PS sequence for $\mathcal{J}_\nu {\big|}_{\mathcal{N}_\nu}$ at level $c\in\mathbb{R}$. Then, $\{(u_n,v_n)\}$ is a PS sequence for $\mathcal{J}_\nu$ in $\mathbb{D}$, namely
\begin{equation*}
\mathcal{J}_{\nu}'(u_n,v_n)\to0 \quad \mbox{ in } \mathbb{D}' \quad\text{as }n\to+\infty.
\end{equation*}
\end{lemma}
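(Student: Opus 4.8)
The plan is to invoke the Lagrange multiplier rule on the $C^1$ manifold $\mathcal{N}_\nu$ and to verify that the resulting multipliers vanish along the sequence. First I would establish that $\{(u_n,v_n)\}$ is bounded in $\mathbb{D}$. Using the representation \eqref{Nnueq} of $\mathcal{J}_\nu$ on $\mathcal{N}_\nu$, both coefficients $\frac{1}{2}-\frac{1}{\alpha+\beta}$ and $\frac{1}{\alpha+\beta}-\frac{1}{2^*_{s}(t)}$ are nonnegative by \eqref{alphabeta}, while the integral terms are nonnegative; hence
\[
c+o(1)=\mathcal{J}_\nu(u_n,v_n)\ge\left(\frac{1}{2}-\frac{1}{\alpha+\beta}\right)\|(u_n,v_n)\|^2_{\mathbb{D}},
\]
which bounds $\|(u_n,v_n)\|_{\mathbb{D}}$ uniformly since $\alpha+\beta>2$.

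Next, since $\mathcal{N}_\nu$ is a smooth manifold of codimension one, the hypothesis that $\{(u_n,v_n)\}$ is PS for $\mathcal{J}_\nu{\big|}_{\mathcal{N}_\nu}$ provides multipliers $\mu_n\in\mathbb{R}$ with
\[
\mathcal{J}'_\nu(u_n,v_n)=\mu_n\,\Psi'(u_n,v_n)+w_n,\qquad \|w_n\|_{\mathbb{D}'}\to0 .
\]
Testing this identity against $(u_n,v_n)$ and using $\left\langle\mathcal{J}'_\nu(u_n,v_n){\big|}(u_n,v_n)\right\rangle=\Psi(u_n,v_n)=0$ on $\mathcal{N}_\nu$ yields $\mu_n\left\langle\Psi'(u_n,v_n){\big|}(u_n,v_n)\right\rangle=-\left\langle w_n{\big|}(u_n,v_n)\right\rangle=o(1)$, where the last step combines $\|w_n\|_{\mathbb{D}'}\to0$ with the boundedness just obtained.

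The crux is to show $\left\langle\Psi'(u_n,v_n){\big|}(u_n,v_n)\right\rangle$ is bounded away from zero. Differentiating $\Psi(\tau u,\tau v)$ at $\tau=1$ gives $\left\langle\Psi'(u,v){\big|}(u,v)\right\rangle=\mathcal{J}''_\nu(u,v)[u,v]^2+\Psi(u,v)$, so on $\mathcal{N}_\nu$ this coincides with the second-variation quantity already computed in Section \ref{sect:Functional},
\[
\left\langle\Psi'(u,v){\big|}(u,v)\right\rangle=(2-\alpha-\beta)\|(u,v)\|^2_{\mathbb{D}}+(\alpha+\beta-2^*_{s}(t))\left(\int_{\mathbb{R}^N}\frac{|u|^{2^*_{s}(t)}}{|x|^t}dx+\int_{\mathbb{R}^N}\frac{|v|^{2^*_{s}(t)}}{|x|^t}dx\right).
\]
By \eqref{alphabeta} the second term is nonpositive, while by the uniform lower bound \eqref{cp2} the first is at most $(2-\alpha-\beta)r_\nu^2<0$; hence $\left\langle\Psi'(u_n,v_n){\big|}(u_n,v_n)\right\rangle\le(2-\alpha-\beta)r_\nu^2<0$ for all $n$. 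Combined with the previous paragraph, this forces $\mu_n\to0$.

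Finally, the boundedness of $\{(u_n,v_n)\}$ together with the (sub)critical growth of the nonlinear terms, via the continuous embeddings $\mathcal{D}^{s,2}(\mathbb{R}^N)\hookrightarrow L^{q,t}(\mathbb{R}^N)$, shows that $\Psi'(u_n,v_n)$ is bounded in $\mathbb{D}'$; therefore $\mu_n\Psi'(u_n,v_n)\to0$ and $\mathcal{J}'_\nu(u_n,v_n)=w_n+\mu_n\Psi'(u_n,v_n)\to0$ in $\mathbb{D}'$. I expect the main obstacle to be exactly the uniform non-degeneracy $\left\langle\Psi'(u_n,v_n){\big|}(u_n,v_n)\right\rangle\le-\delta<0$: the whole argument hinges on it, and it in turn rests precisely on the sign conditions in \eqref{alphabeta} and on the manifold's uniform lower bound \eqref{cp2}.
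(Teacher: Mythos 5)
Your argument is correct and is exactly the standard Lagrange-multiplier route that the paper alludes to when it omits the proof (citing the analogous lemmas in the local and fractional references): boundedness from \eqref{Nnueq}, multipliers $\mu_n$ from the codimension-one manifold structure, the identity $\left\langle\Psi'(u,v)\big|(u,v)\right\rangle=\mathcal{J}_\nu''(u,v)[u,v]^2$ on $\mathcal{N}_\nu$, and the uniform negativity coming from \eqref{alphabeta} and \eqref{cp2} to force $\mu_n\to0$. No gaps; the one point worth making explicit is that the uniform bound $\left\langle\Psi'(u_n,v_n)\big|(u_n,v_n)\right\rangle\le(2-\alpha-\beta)r_\nu^2<0$ also guarantees $\Psi'(u_n,v_n)\neq0$, so the multipliers are well defined.
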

\begin{lemma}\label{lem_PSBounded}
Let $\{(u_n,v_n)\} \subset \mathbb{D}$ be a PS sequence for $\mathcal{J}_\nu$ at level $c\in\mathbb{R}$. Then,  $\|(u_n,v_n)\|_{\mathbb{D}}<C$.
\end{lemma}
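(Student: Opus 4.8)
The plan is to run the classical energy-versus-derivative argument for Palais--Smale sequences, combining the two defining properties into a single combination that cancels the coupling term and leaves a coercive quadratic. First I would record the PS hypotheses quantitatively: from $\mathcal{J}_\nu(u_n,v_n)\to c$ we have $\mathcal{J}_\nu(u_n,v_n)=c+o(1)$, and from $\mathcal{J}_\nu'(u_n,v_n)\to 0$ in $\mathbb{D}'$, testing against $(u_n,v_n)$ itself gives
\[
\Psi(u_n,v_n)=\left\langle \mathcal{J}_\nu'(u_n,v_n){\big|}(u_n,v_n)\right\rangle = o\big(\|(u_n,v_n)\|_{\mathbb{D}}\big),
\]
by the definition of the dual norm.

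Next I would form the combination $\mathcal{J}_\nu(u_n,v_n)-\tfrac{1}{\alpha+\beta}\Psi(u_n,v_n)$. The multiplier $\tfrac{1}{\alpha+\beta}$ is dictated precisely by the homogeneity of the coupling term, which carries weight $-\nu$ in $\mathcal{J}_\nu$ and $-\nu(\alpha+\beta)$ in $\Psi$; hence the coupling integral cancels exactly, and one is left with the algebraic identity already recorded in \eqref{Nnueq}, now valid for arbitrary $(u_n,v_n)\in\mathbb{D}$:
\[
\mathcal{J}_\nu-\tfrac{1}{\alpha+\beta}\Psi=\left(\tfrac{1}{2}-\tfrac{1}{\alpha+\beta}\right)\|(u_n,v_n)\|_{\mathbb{D}}^2+\left(\tfrac{1}{\alpha+\beta}-\tfrac{1}{2_s^*(t)}\right)\left(\int_{\mathbb{R}^N}\tfrac{|u_n|^{2_s^*(t)}}{|x|^t}dx+\int_{\mathbb{R}^N}\tfrac{|v_n|^{2_s^*(t)}}{|x|^t}dx\right).
\]
By \eqref{alphabeta} one has $\alpha+\beta>2$ and $\alpha+\beta\le 2_s^*(t)$, so the first coefficient is strictly positive and the second is nonnegative. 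Dropping the nonnegative critical integrals and inserting the two quantitative bounds above yields, with $X_n\vcentcolon=\|(u_n,v_n)\|_{\mathbb{D}}$ and $\kappa\vcentcolon=\tfrac{1}{2}-\tfrac{1}{\alpha+\beta}>0$,
\[
\kappa\,X_n^2\le \mathcal{J}_\nu(u_n,v_n)-\tfrac{1}{\alpha+\beta}\Psi(u_n,v_n)=c+o(1)+o(X_n).
\]

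Finally I would conclude by the usual quadratic-versus-linear comparison: the left-hand side is quadratic in $X_n$ while the right-hand side grows at most linearly, so $\{X_n\}$ must be bounded. Indeed, were a subsequence to satisfy $X_n\to\infty$, dividing by $X_n^2$ and letting $n\to\infty$ would force $\kappa\le 0$, contradicting $\kappa>0$. Hence $\|(u_n,v_n)\|_{\mathbb{D}}<C$ uniformly, as claimed. There is essentially no serious obstacle here beyond the bookkeeping: the only delicate point is the choice of the multiplier so that the coupling term cancels and the remaining quadratic is coercive with a nonnegative remainder, and this is guaranteed exactly by the sign conditions in \eqref{alphabeta}. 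The borderline case $\alpha+\beta=2_s^*(t)$ requires no separate treatment, since the critical coefficient then simply vanishes and one is left with the same positive quadratic term.
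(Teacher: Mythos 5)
Your proof is correct and is exactly the standard argument the paper has in mind: the paper omits the proof of this lemma, referring to \cite[Lemma 3.3]{Colorado2022} and \cite[Lemma 3.1]{Kumar2022}, and the combination $\mathcal{J}_\nu-\tfrac{1}{\alpha+\beta}\langle\mathcal{J}_\nu'(u_n,v_n)|(u_n,v_n)\rangle$ you use is precisely the one the author later invokes in \eqref{eq:limit2}. The sign checks ($\alpha+\beta>2$ and $\alpha+\beta\le 2_s^*(t)$ from \eqref{alphabeta}) and the quadratic-versus-linear comparison are all in order, so nothing is missing.
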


\subsection{Subcritical range $ \alpha+\beta < 2_{s}^*$}\hfill\newline
In the following, we prove the Palais-Smale compactness condition under a quantization of the energy levels of $\mathcal{J}_\nu$. The proof is based on a concentration-compactness argument relying on \cite{Bonder2018, Chen2018}.  Observe that due to the presence of the Hardy-Sobolev terms only concentration at the origin or at infinity can take place as, far from those points, all the involved terms are of subcritical nature.
\begin{lemma}\label{lemmaPS2}
 Assume $\alpha+\beta<2_{s}^*(t)$.  Then, the functional $\mathcal{J}_\nu$ satisfies the PS condition for any level $c$ such that
\begin{equation}\label{hyplemmaPS2}
c<\min\left\{ \mathscr{C}(\lambda_1,t,s), \mathscr{C}(\lambda_2,t,s) \right\}.
\end{equation}
\end{lemma}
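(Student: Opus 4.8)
The plan is to run the by-now classical compactness scheme for critical problems, combining a Brezis--Lieb decomposition with the fractional concentration--compactness principle of \cite{Bonder2018,Chen2018} and the quantization of the critical level induced by the Hardy--Sobolev weights. Let $\{(u_n,v_n)\}\subset\mathbb{D}$ be a PS sequence at level $c$. By Lemma \ref{lem_PSBounded} it is bounded, so up to a subsequence $(u_n,v_n)\rightharpoonup(u_0,v_0)$ weakly in $\mathbb{D}$ and a.e. in $\mathbb{R}^N$. The decisive preliminary point, where the strict subcriticality $\alpha+\beta<2_s^*(t)$ and hypothesis \eqref{H} enter, is that the coupling term is a \emph{compact} perturbation. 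Splitting $\mathbb{R}^N$ into a small ball around the origin, the exterior of a large ball, and the intermediate annulus, one uses the compact embedding $\mathcal{D}^{s,2}(\mathbb{R}^N)\hookrightarrow L^{q,t}(\mathbb{R}^N)$ for $1\le q<2_s^*(t)$ on the annulus (which handles the product $|u_n|^{\alpha}|v_n|^{\beta}$ in view of $\alpha+\beta<2_s^*(t)$), and the smallness of $\int_\Omega h^{p}|x|^{-t}\,dx$, with $p=\frac{2_s^*(t)}{2_s^*(t)-\alpha-\beta}$, on the two tails via the H\"older estimate \eqref{Holder1}, to obtain
\[
\int_{\mathbb{R}^N} h\,\frac{|u_n|^{\alpha}|v_n|^{\beta}}{|x|^t}\,dx\longrightarrow \int_{\mathbb{R}^N} h\,\frac{|u_0|^{\alpha}|v_0|^{\beta}}{|x|^t}\,dx ,
\]
together with the analogous convergence of the corresponding derivative terms. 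Passing to the limit in $\mathcal{J}_\nu'(u_n,v_n)\to0$ tested against fixed functions then yields $\mathcal{J}_\nu'(u_0,v_0)=0$, so $(u_0,v_0)$ is a weak solution; in particular $(u_0,v_0)\in\mathcal{N}_\nu\cup\{(0,0)\}$, whence $\mathcal{J}_\nu(u_0,v_0)\ge0$ by \eqref{Nnueq} and \eqref{alphabeta}.

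Next I would set $\tilde u_n=u_n-u_0$ and $\tilde v_n=v_n-v_0$, which converge weakly to $0$. The Brezis--Lieb lemma splits the squared norms and the critical Hardy--Sobolev integrals, while the compactness just established kills the contribution of the coupling term to the remainder. Combining $\langle\mathcal{J}_\nu'(u_n,v_n),(u_n,v_n)\rangle\to0$ with the weak-solution identity $\langle\mathcal{J}_\nu'(u_0,v_0),(u_0,v_0)\rangle=0$ gives
\[
\lim_{n}\big(\|\tilde u_n\|_{\lambda_1,s}^2+\|\tilde v_n\|_{\lambda_2,s}^2\big)
=\lim_{n}\Big(\int_{\mathbb{R}^N}\frac{|\tilde u_n|^{2_s^*(t)}}{|x|^t}\,dx+\int_{\mathbb{R}^N}\frac{|\tilde v_n|^{2_s^*(t)}}{|x|^t}\,dx\Big)=:L ,
\]
and, via the same decomposition applied to the energy, the identity
\[
c=\mathcal{J}_\nu(u_0,v_0)+\frac{2s-t}{2(N-t)}\,L ,
\]
where I used $\tfrac12-\tfrac{1}{2_s^*(t)}=\tfrac{2s-t}{2(N-t)}$.

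Finally, the concentration--compactness step. Since the Hardy--Sobolev weights force the only possible concentration points of $\{(\tilde u_n,\tilde v_n)\}$ to be the origin and infinity, the principle of \cite{Bonder2018,Chen2018} represents the defect measures as sums of atoms there. Testing the PS condition against cut-offs localized at each atom (where the compact coupling term does not contribute) forces the gradient atom to equal the mass atom, while the Hardy--Sobolev inequality \eqref{HS_ineq_l}---whose extremal constant $\mathcal{S}(\lambda_j,t,s)$ is invariant under the scaling $z_\mu^{(j)}$ of \eqref{eq:uncoupled}, so that the same constant governs concentration both at $0$ and at $\infty$ (cf. \cite{Bhakta2021})---bounds the gradient atom below by $\mathcal{S}(\lambda_j,t,s)$ times the $\tfrac{2}{2_s^*(t)}$-power of the mass atom. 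Together these force every nonzero atom to carry mass at least $\mathcal{S}(\lambda_j,t,s)^{\frac{N-t}{2s-t}}$, hence energy at least $\mathscr{C}(\lambda_j,t,s)$. Consequently, if $L>0$ then $\frac{2s-t}{2(N-t)}L\ge\min\{\mathscr{C}(\lambda_1,t,s),\mathscr{C}(\lambda_2,t,s)\}$, and since $\mathcal{J}_\nu(u_0,v_0)\ge0$ we would get $c\ge\min\{\mathscr{C}(\lambda_1,t,s),\mathscr{C}(\lambda_2,t,s)\}$, contradicting \eqref{hyplemmaPS2}. Thus $L=0$, i.e. $\tilde u_n\to0$ and $\tilde v_n\to0$ strongly in $\mathcal{D}^{s,2}(\mathbb{R}^N)$, so $(u_n,v_n)\to(u_0,v_0)$ in $\mathbb{D}$.

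I expect the main obstacle to be precisely the concentration--compactness bookkeeping in the last paragraph, and within it the justification that concentration at infinity is governed by the \emph{same} threshold $\mathscr{C}(\lambda_j,t,s)$ as at the origin; this is where the scaling invariance of \eqref{eq:uncoupled} and the profile results of \cite{Bhakta2021} are indispensable. A secondary technical point is that the compactness of the coupling term must be verified uniformly along the entire PS sequence, not merely for the weak limit.
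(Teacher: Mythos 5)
Your argument is correct and rests on the same two pillars as the paper's proof --- the fractional concentration--compactness principle of \cite{Bonder2018,Chen2018} localized at the origin and at infinity, and the quantization of the atoms, $\rho_j=0$ or $\rho_j\ge[\mathcal{S}(\lambda_i,t,s)]^{\frac{N-t}{2s-t}}$, obtained by testing $\mathcal{J}_\nu'$ with cut-offs and invoking \eqref{HS_ineq_l} --- but your final energy accounting is genuinely different. You first prove full compactness of the coupling term (which does follow from \eqref{H} together with the weak convergence of $|u_n|^{\alpha}|v_n|^{\beta}$ in $L^{\frac{2_s^*(t)}{\alpha+\beta},t}(\mathbb{R}^N)$, since $\tfrac{\alpha+\beta}{2_s^*(t)}+\tfrac{2_s^*(t)-\alpha-\beta}{2_s^*(t)}=1$), deduce that the weak limit is a critical point with $\mathcal{J}_\nu(u_0,v_0)\ge 0$, and then use a Brezis--Lieb splitting to arrive at $c=\mathcal{J}_\nu(u_0,v_0)+\tfrac{2s-t}{2(N-t)}L$ with $L$ the total defect mass. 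The paper needs neither of these preliminary steps: it evaluates the Nehari-type combination $\mathcal{J}_\nu-\tfrac{1}{\alpha+\beta}\langle\mathcal{J}_\nu'(\cdot),\cdot\rangle$ along the sequence, which cancels the coupling term algebraically and yields \eqref{eq:limit2} with two positive coefficients, then discards the nonnegative non-atomic contributions and applies the quantization through the identity $\bigl(\tfrac12-\tfrac1{\alpha+\beta}\bigr)\mathcal{S}\rho^{2/2_s^*(t)}+\bigl(\tfrac1{\alpha+\beta}-\tfrac1{2_s^*(t)}\bigr)\rho=\tfrac{2s-t}{2(N-t)}\mathcal{S}^{\frac{N-t}{2s-t}}$ at $\rho=\mathcal{S}^{\frac{N-t}{2s-t}}$. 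Your route is a bit longer but isolates the defect mass cleanly; the paper's is shorter because it never has to verify compactness of the coupling term on the whole sequence nor that the weak limit solves the system. Two minor points of precision: the cut-off test yields the one-sided inequality $\mu_0-\lambda_1\eta_0\le\rho_0$ rather than equality of the atoms (combined with $\mu_0-\lambda_1\eta_0\ge\mathcal{S}(\lambda_1,t,s)\rho_0^{2/2_s^*(t)}$ this still gives your dichotomy), and the concentration at infinity is indeed governed by the same constants $\mathcal{S}(\lambda_j,t,s)$, exactly as the paper asserts in \eqref{ineq:coninf_a}.
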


\begin{proof}
Since, by Lemma~\ref{lem_PSBounded}, any PS sequence is bounded in $\mathbb{D}$, there exists $(\tilde{u},\tilde{v})\in\mathbb{D}$ such that, up to a subsequence,
 \begin{align*}
(u_n,v_n) \rightharpoonup (\tilde{u},\tilde{v})& \quad \hbox{weakly in  } \mathbb{D},\\
(u_n,v_n) \to (\tilde{u},\tilde{v})&\quad \hbox{strongly in  } L_{loc}^{q,t}(\mathbb{R}^N)\times L_{loc}^{q,t}(\mathbb{R}^N)\text{ for } 1\leq q<2_s^*(t),\\
(u_n,v_n) \to (\tilde{u},\tilde{v})&\quad \hbox{a.e. in  }\mathbb{R}^N.
\end{align*}
Following \cite{Bonder2018}, let us define
\begin{equation*}
|D^su|^2=\int_{\mathbb{R}^N}\frac{|u(x+h)-u(x)|^2}{|h|^{N+2s}}dh.
\end{equation*}
Note that the term $|D^su|^2$ is the fractional counterpart to the classical (and local) term $|\nabla u|^2$. Because of \cite[Theorem 1.1]{Bonder2018}, \cite[Lemma 4.5]{Chen2018}, there exist a subsequence $\{(u_n,v_n)\}$ and positive numbers $\mu_0$, $\rho_0$, $\eta_0$, $\overline{\mu}_0$, $\overline{\rho}_0$ and $\overline{\eta}_0$ such that, in the sense of measures,
\begin{equation}\label{con-comp}
\left\{
\begin{array}{l}
|D^s u_n|^2 \rightharpoonup d\mu\ge |D^s \tilde{u}|^2+\mu_0\delta_0,\qquad |D^s v_n|^2\rightharpoonup d\overline{\mu}\ge |D^s \tilde{v}|^2+\overline{\mu}_0\delta_0,\\
\\
\dfrac{|u_n|^{2_{s}^*(t)}}{|x|^{t}} \rightharpoonup d\rho=\dfrac{|\tilde{u}|^{2_{s}^*(t)}}{|x|^{t}}+\rho_0\delta_0,\qquad\ \ 
\dfrac{|v_n|^{2_{s}^*(t)}}{|x|^{t}}  \rightharpoonup d\overline{\rho}= \dfrac{|\tilde{v}|^{2_{s}^*(t)}}{|x|^{t}}+\overline{\rho}_0\delta_0,\\
\\
\dfrac{u_n^2}{|x|^{2s}} \rightharpoonup d\eta=\dfrac{\tilde{u}^2}{|x|^{2s}}+\eta_0\delta_0,\qquad \mkern+40mu 
\dfrac{v_n^2}{|x|^{2s}} \rightharpoonup d\overline{\eta}=\dfrac{\tilde{v}^2}{|x|^{2s}}+\overline{\eta}_0\delta_0.
\end{array}
\right.
\end{equation}
We also set the numbers
\begin{equation}\label{con-compinfty}
\begin{split}
\mu_{\infty}&=\lim\limits_{R\to+\infty}\limsup\limits_{n\to+\infty}\int_{|x|>R}|D^s u_n|^{2}dx,\\
\rho_{\infty}&=\lim\limits_{R\to+\infty}\limsup\limits_{n\to+\infty}\int_{|x|>R}\frac{|u_n|^{2_{s}^*(t)}}{|x|^{t}}dx,\\
\eta_{\infty}&=\lim\limits_{R\to+\infty}\limsup\limits_{n\to+\infty}\int_{|x|>R}\frac{u_n^{2}}{|x|^{2s}}dx.
\end{split}
\end{equation}
to encode the concentration at infinity of the sequence $\{u_n\}$. The concentration at infinity of $\{v_n\}$ is encoded analogously by the numbers $\overline{\mu}_{\infty}$, $\overline{\rho}_{\infty}$ and $\overline{\eta}_{\infty}$. Next, let $B_r(0)$ be the ball of radius $r>0$ centered at $0$ and let $\varphi_{\varepsilon}(x)$ be a smooth cut-off function centered at $0$, i.e., $\varphi_{\varepsilon}\in C^{\infty}(\mathbb{R}^+_0)$,
\begin{equation}\label{cutoff}
\varphi_{\varepsilon}=1 \quad \hbox{in}\quad B_{\frac{\varepsilon}{2}}(0),\quad \varphi_{\varepsilon}=0 \quad \hbox{in}
\quad B_{\varepsilon}^c(0)\quad \hbox{and}\quad\displaystyle|\nabla \varphi_{\varepsilon}|\leq \frac{4}{\varepsilon}.
\end{equation}
Testing $\mathcal{J}_{\nu}'(u_n,v_n)$ with $(u_n\varphi_{\varepsilon},0)$ we get

\begin{equation}\label{eq:cc1}
\begin{split}
0=&\lim\limits_{n\to+\infty}\left\langle \mathcal{J}'_\nu(u_n,v_n){\big|}(u_n\varphi_{\varepsilon},0)\right\rangle\\
=&\lim\limits_{n\to+\infty}\left(\iint_{\mathbb{R}^{2N}}\frac{|u_n(x)-u_n(y)|^2}{|x-y|^{N+2s}}\varphi_{\varepsilon}dydx\right.\\
&\mkern+65mu+\iint_{\mathbb{R}^{2N}}\frac{(u_n(x)-u_n(y))(\varphi_{\varepsilon}(x)-\varphi_{\varepsilon}(y))}{|x-y|^{N+2s}}u_n(y)dydx\\
&\mkern+65mu-\left.\lambda_1\int_{\mathbb{R}^N}\frac{u_n^2}{|x|^{2s}}\varphi_{\varepsilon}dx-\int_{\mathbb{R}^N}\frac{|u_n|^{2_{s}^*(t)}}{|x|^{t}}\varphi_{\varepsilon}dx-\nu\alpha\int_{\mathbb{R}^N}h\frac{|u_n|^\alpha  |v_n|^\beta}{|x|^{t}}  \varphi_{\varepsilon} dx\right)\\
=&\int_{\mathbb{R}^N}\varphi_{\varepsilon}d\mu-\lambda_1\int_{\mathbb{R}^N}\varphi_{\varepsilon}d\eta-\int_{\mathbb{R}^N}\varphi_{\varepsilon}d\rho\\
&+\lim\limits_{n\to+\infty}\iint_{\mathbb{R}^{2N}}\frac{(u_n(x)-u_n(y))(\varphi_{\varepsilon}(x)-\varphi_{\varepsilon}(y))}{|x-y|^{N+2s}}u_n(y)dydx\\
&-\nu\alpha \lim\limits_{n\to+\infty}\int_{\mathbb{R}^N}h\frac{|u_n|^\alpha  |v_n|^\beta}{|x|^{t}}  \varphi_{\varepsilon} dx.
\end{split}
\end{equation}
Next we show that
\begin{equation*}
\lim\limits_{\varepsilon\to0^+}\lim\limits_{n\to+\infty}\iint_{\mathbb{R}^{2N}}\frac{(u_n(x)-u_n(y))(\varphi_{\varepsilon}(x)-\varphi_{\varepsilon}(y))}{|x-y|^{N+2s}}u_n(y)dydx=0.
\end{equation*}
Actually, since $\{u_n\varphi_{\varepsilon}\}$ is bounded, by using H\"older inequality, 
\begin{equation*}
\iint_{\mathbb{R}^{2N}}\frac{(u_n(x)-u_n(y))(\varphi_{\varepsilon}(x)-\varphi_{\varepsilon}(y))}{|x-y|^{N+2s}}u_n(y)dydx\leq C\left(\iint_{\mathbb{R}^{2N}}\frac{|\varphi_{\varepsilon}(x)-\varphi_{\varepsilon}(y)|^2|u_n(y)|^2}{|x-y|^{N+2s}}dydx\right)^{\frac{1}{2}},
\end{equation*}
and, by \cite[Lemma 2.2, Lemma 2.4]{Bonder2018} and \cite[Lemma 2.3]{Xiang2022},
\begin{equation*}
\lim\limits_{\varepsilon\to0^+}\lim\limits_{n\to+\infty}\iint_{\mathbb{R}^{2N}}\frac{|\varphi_{\varepsilon}(x)-\varphi_{\varepsilon}(y)|^2|u_n(y)|^2}{|x-y|^{N+2s}}dydx=0.
\end{equation*}
On the other hand, since $\alpha+\beta< 2_s^*(t)$, we have
\begin{equation*}
\begin{split}
\int_{\mathbb{R}^N}h\frac{|u_n|^\alpha  |v_n|^\beta}{|x|^{t}}  \varphi_{\varepsilon} dx\leq&
\left(\int_{\mathbb{R}^N}h\frac{\varphi_{\varepsilon} }{|x|^{t}}  dx\right)^{1-\frac{\alpha+\beta}{2_s^*(t)}}
\left(\int_{\mathbb{R}^N}h\varphi_{\varepsilon}\frac{ |u|^{2_s^*(t)}}{|x|^{t}}  dx\right)^{\frac{\alpha}{2_s^*(t)}}
\left(\int_{\mathbb{R}^N}h\varphi_{\varepsilon}\frac{ |v|^{2_s^*(t)}}{|x|^{t}}  dx\right)^{\frac{\beta}{2_s^*(t)}},
\end{split}
\end{equation*}
and, moreover,
\begin{equation*}
\lim\limits_{\varepsilon\to0^+}\int_{\mathbb{R}^N}h(x)\frac{\varphi_{\varepsilon} }{|x|^{t}}  dx=0.
\end{equation*}
Thus, from \eqref{eq:cc1}, $\mu_0-\lambda_1\eta_0-\rho_0\leq 0$. Since, by \eqref{HS_ineq_l}, we also have
\begin{equation}\label{ineq:cca}
\mu_0-\lambda_1\eta_0\ge \mathcal{S}(\lambda_1,t,s)\rho_0^{\frac{2}{2_{s}^*(t)}},
\end{equation}
from \eqref{hardy_inequality} and \eqref{HS_ineq}, we conclude
\begin{equation}\label{ineq:cc0a}
\rho_0=0\quad\text{or}\quad \rho_0\ge \left[\mathcal{S}(\lambda_1,t,s)\right]^{\frac{N-t}{2s-t}}.
\end{equation}
Similar relations hold for the concentration coefficients $\overline{\mu}_0$, $\overline{\eta}_0$ and $\overline{\rho}_0$ of $v$, namely, by testing $\mathcal{J}_{\nu}'(u_n,v_n)$ with $(0,v_n\varphi_{\infty,\varepsilon})$ and arguing as before, we find
$\overline{\mu}_0-\lambda_2\overline{\eta}_0-\overline{\rho}_0\leq0$. This, combined with \eqref{HS_ineq_l}, produces 
\begin{equation}\label{ineq:cca2}
\overline{\mu}_0-\lambda_2\overline{\eta}_0\ge \mathcal{S}(\lambda_2,t,s)\overline{\rho}_0^{\frac{2}{2_{s}^*(t)}},
\end{equation}
from where we also conclude 
\begin{equation}\label{ineq:cc0a2}
\overline{\rho}_0=0\quad\text{or}\quad \overline{\rho}_0\ge \left[\mathcal{S}(\lambda_2,t,s)\right]^{\frac{N-t}{2s-t}}.
\end{equation}
In order to analyze the concentration phenomena at $\infty$, let us consider $\varphi_{\infty,\varepsilon}$ a cut-off function supported near $\infty$, that is
\begin{equation}\label{cutoffinfi}
\varphi_{\infty,\varepsilon}=0 \quad \hbox{in}\quad B_{R}(0),\quad \varphi_{\infty,\varepsilon}=1 \quad \hbox{in}
\quad B_{R+1}^c(0)\quad \hbox{and}\quad\displaystyle|\nabla \varphi_{\infty,\varepsilon}|\leq \frac{4}{\varepsilon},
\end{equation}
for $R>0$. We can similarly prove $\mu_{\infty}-\lambda_1\eta_{\infty}-\rho_{\infty}\leq 0$ and $\overline{\mu}_{\infty}-\lambda_2\overline{\eta}_{\infty}-\overline{\rho}_{\infty}\leq0$, by testing $\mathcal{J}_{\nu}'(u_n,v_n)$ with $(u_n\varphi_{\infty,\varepsilon},0)$ and $(0,v_n\varphi_{\infty,\varepsilon})$ respectively, so that
\begin{equation}\label{ineq:coninf_a}
\mu_{\infty}-\lambda_1\eta_{\infty}\ge \mathcal{S}(\lambda_1,t,s)\rho_{\infty}^{\frac{2}{2_{s}^*(t)}}\qquad\text{and}\qquad
\overline{\mu}_{\infty}-\lambda_2\overline{\eta}_{\infty}\ge \mathcal{S}(\lambda_2,s)\overline{\rho}_{\infty}^{\frac{2}{2_{s}^*}}.
\end{equation}
Thus, 
\begin{equation}\label{ineq:coninf}
\rho_{\infty}=0\quad\text{or}\quad \rho_{\infty}\ge \left[\mathcal{S}(\lambda_1,t,s)\right]^{\frac{N-t}{2s-t}}\qquad\text{and}\qquad
\overline{\rho}_{\infty}=0\quad\text{or}\quad \overline{\rho}_{\infty}\ge \left[\mathcal{S}(\lambda_2,t,s)\right]^{\frac{N-t}{2s-t}}.
\end{equation}
Next, given $\{(u_n,v_n)\}$ a PS sequence for $\mathcal{J}_{\nu}$ at level $c$, we have, up to a subsequence,
\begin{equation*}
\mathcal{J}_{\nu}(u_n,v_n)-\frac{1}{\alpha+\beta}\left\langle \mathcal{J}_{\nu}'(u_n,v_n)\left|\frac{(u_n,v_n)}{\|(u_n,v_n)\|_{\mathbb{D}}} \right.\right\rangle=c+\|(u_n,v_n)\|_{\mathbb{D}}\cdot o(1).
\end{equation*}
Hence,
\begin{equation}\label{eq:limit2}
c\!=\!\left(\! \frac{1}{2}\!-\! \frac{1}{\alpha+\beta}\! \right)\!\|(u_n,v_n)\|_{\mathbb{D}}^2\!+\!\left( \frac{1}{\alpha+\beta}\!-\!\frac{1}{2^*_{s}(t)}\! \right)\!\!\left(\!\int_{\mathbb{R}^N}\! \frac{|u_n|^{2^*_{s}(t)}}{|x|^{t}}dx  +  \int_{\mathbb{R}^N}\! \frac{|v_n|^{2^*_{s}(t)}}{|x|^{t}}dx\!\right)\!+o(1).
\end{equation}
Therefore, by using \eqref{con-comp}, \eqref{ineq:cca} and \eqref{ineq:coninf_a}, we get
\begin{equation}\label{ineq:l}
\begin{split}
c\ge& \left(\frac{1}{2}\!-\!\frac{1}{\alpha+\beta} \right)\!\!\Bigg(\!\|(\tilde{u},\tilde{v})\|_{\mathbb{D}}^2\! +\! (\mu_0-\lambda_1\eta_0)\!+\!(\mu_{\infty}-\lambda_1\eta_{\infty})\! +\!(\overline{\mu}_0-\lambda_2\overline{\eta}_0)\!+\!(\overline{\mu}_{\infty}-\lambda_2\overline{\eta}_{\infty})\!\Bigg)\\
&+\left(\frac{1}{\alpha+\beta}- \frac{1}{2_{s}^*(t)} \right)\left(\int_{\mathbb{R}^N}\frac{|\tilde{u}|^{2_{s}^*(t)}}{|x|^{t}}dx+  \rho_0+\rho_{\infty} +\int_{\mathbb{R}^N}\frac{|\tilde{v}|^{2_{s}^*(t)}}{|x|^{t}}dx +\overline{\rho}_0+\overline{\rho}_{\infty} \right)\\
\ge&\left(\frac{1}{2}-\frac{1}{\alpha+\beta} \right)\left( \mathcal{S}(\lambda_1,t,s)\left[\rho_0^{\frac{2}{2_{s}^*(t)}}+\rho_{\infty}^{\frac{2}{2_{s}^*(t)}}\right]+\mathcal{S}(\lambda_2,t,s)\left[\overline{\rho}_0^{\frac{2}{2_{s}^*(t)}}+\overline{\rho}_{\infty}^{\frac{2}{2_{s}^*(t)}}\right]\right)\\
&+\left(\frac{1}{\alpha+\beta}- \frac{1}{2_{s}^*(t)} \right)\left(\rho_0+\rho_{\infty} +\overline{\rho}_0+\overline{\rho}_{\infty} \right).
\end{split}
\end{equation}
In case of having concentration at $0$, i.e., if $\rho_0\neq0$, by \eqref{ineq:l} combined with \eqref{ineq:cc0a}, we reach a contradiction with the hypothesis on the energy level $c$, namely
\begin{equation*}
c\ge\frac{2s-t}{2(N-t)}\left[\mathcal{S}(\lambda_1,t,s)\right]^{\frac{N-t}{2s-t}}=\mathscr{C}(\lambda_1,t,s).
\end{equation*}
Then, $\rho_0=0$. Similarly, we get $\overline{\rho}_0=0$ and arguing as above using now \eqref{ineq:coninf} it also follows $\rho_{\infty}=0$ and $\overline{\rho}_{\infty}=0$. Thus, there exists a subsequence strongly converging in $L^{2_s^*(t)}(\mathbb{R}^N)\times L^{2_s^*(t)}(\mathbb{R}^N)$ such that
$\|(u_n-\tilde{u},v_n-\tilde{v})\|_{\mathbb{D}}^2 = \left\langle \mathcal{J}_\nu'(u_n,v_n)\big| (u_n-\tilde{u},v_n-\tilde{v}) \right\rangle + o(1)$ and, thus, the sequence $\{(u_n,v_n)\}$ strongly converges in $\mathbb{D}$ and the PS condition holds.
\end{proof}
To continue, we enhance Lemma \ref{lemmaPS2} by establishing the PS condition for energy levels that surpass the critical threshold \eqref{hyplemmaPS2}, while disregarding any multiples or combinations of the involved quantities related with the energy of the semi-trivial solutions. To that end, we exploit the concentration profile characterization provided in \cite[Theorem 2.1]{Bhakta2021}.
\begin{lemma}\label{lemmaPS1}
Assume that $\alpha+\beta<2_{s}^*(t)$, $\alpha\ge2$ and $\mathscr{C}(\lambda_1,t,s)\geq\mathscr{C}(\lambda_2,t,s)$. Then, there exists $\tilde{\nu}>0$ such that, if $0<\nu\leq\tilde{\nu}$ and $\{(u_n,v_n)\} \subset \mathbb{D}$ is a PS sequence for $\mathcal{J}^+_\nu$ at level $c\in\mathbb{R}$ such that
\begin{equation}\label{PS1}
\mathscr{C}(\lambda_1,t,s)<c<\mathscr{C}(\lambda_1,t,s)+\mathscr{C}(\lambda_2,t,s)
\end{equation}
and
\begin{equation}\label{PS2}
c\neq \ell \mathscr{C}(\lambda_2,t,s) \quad \mbox{ for every } \ell \in \mathbb{N}\setminus \{0\},
\end{equation}
then $(u_n,v_n)\to(\tilde{u},\tilde{v}) \in \mathbb{D}$ up to subsequence.
\end{lemma}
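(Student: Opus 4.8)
The plan is to run the classical global–compactness scheme of Struwe in its fractional Hardy–Sobolev form. First I would use Lemma~\ref{lem_PSBounded} to get that $\{(u_n,v_n)\}$ is bounded in $\mathbb{D}$, so that along a subsequence $(u_n,v_n)\rightharpoonup(\tilde u,\tilde v)$ weakly in $\mathbb{D}$, strongly in $L^{q,t}_{loc}(\mathbb{R}^N)$ for $q<2_s^*(t)$, and a.e.\ in $\mathbb{R}^N$. Passing to the limit in $\langle(\mathcal{J}^+_\nu)'(u_n,v_n)\mid(\varphi,\psi)\rangle$ shows that $(\tilde u,\tilde v)$ is a critical point of $\mathcal{J}^+_\nu$; by the positivity enforced through the truncation together with the maximum principle, $(\tilde u,\tilde v)$ is a nonnegative solution of \eqref{system} (possibly trivial), and whenever it is nontrivial it lies on $\mathcal{N}^+_\nu$, so that by \eqref{N2} its energy $\mathcal{J}^+_\nu(\tilde u,\tilde v)$ is nonnegative.

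Next I would split off the weak limit and study the remainder $(w_n,\zeta_n):=(u_n-\tilde u,v_n-\tilde v)\rightharpoonup(0,0)$. Since $\alpha+\beta<2_s^*(t)$, the coupling term is of subcritical type: exactly as in the proof of Lemma~\ref{lemmaPS2}, using hypothesis~\eqref{H}, the Hölder estimate \eqref{Holder1} and the compactness of $\mathcal{D}^{s,2}(\mathbb{R}^N)\hookrightarrow L^{q,t}(\mathbb{R}^N)$ for $q<2_s^*(t)$, it converges strongly and hence does not contribute to the loss of compactness. Consequently $(w_n,\zeta_n)$ is a Palais–Smale sequence, at level $c-\mathcal{J}^+_\nu(\tilde u,\tilde v)$, for the decoupled critical functional $(w,\zeta)\mapsto\frac12\|w\|_{\lambda_1,s}^2+\frac12\|\zeta\|_{\lambda_2,s}^2-\frac{1}{2_s^*(t)}\int_{\mathbb{R}^N}\frac{(w^+)^{2_s^*(t)}}{|x|^t}dx-\frac{1}{2_s^*(t)}\int_{\mathbb{R}^N}\frac{(\zeta^+)^{2_s^*(t)}}{|x|^t}dx$.

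I would then apply the concentration–profile characterization of \cite[Theorem 2.1]{Bhakta2021} to each component separately. Because of the weight $|x|^{-t}$, no concentration can occur away from the origin, so the only admissible profiles are rescalings of the extremals $z^{(j)}_\mu$ concentrating at $0$ or at $\infty$; by \eqref{relation} and \eqref{csemi}--\eqref{csemi2} each such bubble carries exactly the energy $\mathscr{C}(\lambda_1,t,s)$ (first component) or $\mathscr{C}(\lambda_2,t,s)$ (second component). Passing to the limit gives the energy quantization
\[
c=\mathcal{J}^+_\nu(\tilde u,\tilde v)+k_1\,\mathscr{C}(\lambda_1,t,s)+k_2\,\mathscr{C}(\lambda_2,t,s)\qquad\text{for some integers }k_1,k_2\ge0,
\]
and strong convergence is equivalent to $k_1=k_2=0$.

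Finally I would argue by contradiction, assuming $k_1+k_2\ge1$, and exploit the numerology of the hypotheses. Writing $C_j=\mathscr{C}(\lambda_j,t,s)$ with $C_1\ge C_2>0$ and recalling $\mathcal{J}^+_\nu(\tilde u,\tilde v)\ge0$: the bound $c<C_1+C_2\le 2C_1$ forces $k_1\le1$ and rules out $k_1,k_2$ both positive, leaving only $(k_1,k_2)=(1,0)$ and $(k_1,k_2)=(0,k_2)$ with $k_2\ge1$. The decisive ingredient, and the step I expect to be the \emph{main obstacle}, is a sharp lower energy bound for the nontrivial weak limit: combining the Nehari identity \eqref{N1}, the Hardy–Sobolev inequality \eqref{HS_ineq_l}, the Hölder bound \eqref{Holder1} and the algebraic Lemma~\ref{lem_algebr} (this is where $\alpha\ge2$ and the smallness of $\nu$, keeping $B\nu$ small, enter), one should show that every nontrivial critical point of $\mathcal{J}^+_\nu$ has energy $\ge(1-\varepsilon)C_2$, and that any such point of energy below $C_1$ must be semi-trivial, of the form $(0,\tilde v)$, whose energy equals exactly $C_2$ by the variational characterization \eqref{S1}. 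With this at hand the case $(1,0)$ gives $c=\mathcal{J}^+_\nu(\tilde u,\tilde v)+C_1$, so $\mathcal{J}^+_\nu(\tilde u,\tilde v)<C_2$ forces $(\tilde u,\tilde v)=(0,0)$ and $c=C_1$, contradicting \eqref{PS1}; while the case $k_1=0$ forces the weak limit to have energy below $C_1$, hence to be trivial or semi-trivial in $v$ of energy $C_2$, whence $c=\ell\,C_2$ for some $\ell\ge1$, contradicting \eqref{PS2}. Therefore $k_1=k_2=0$ and $(u_n,v_n)\to(\tilde u,\tilde v)$ strongly in $\mathbb{D}$.
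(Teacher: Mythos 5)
Your proposal is correct in outline and rests on the same essential ingredients as the paper, but it is organized along a genuinely different route. You perform a global--compactness (Struwe-type) splitting: subtract the weak limit, observe that the subcritical coupling term is weakly continuous, decompose the remainder via the profile characterization of \cite[Theorem 2.1]{Bhakta2021} applied to each component, and reduce the lemma to the arithmetic identity $c=\mathcal{J}^+_\nu(\tilde u,\tilde v)+k_1\,\mathscr{C}(\lambda_1,t,s)+k_2\,\mathscr{C}(\lambda_2,t,s)$ together with the bounds \eqref{PS1}--\eqref{PS2}. The paper never writes this quantization in full; it works instead with the concentration measures at $0$ and $\infty$ from \eqref{con-comp}--\eqref{con-compinfty}, first shows that at least one component converges (since two simultaneous bubbles already cost $\mathscr{C}(\lambda_1,t,s)+\mathscr{C}(\lambda_2,t,s)$), and then runs two separate cases according to which component converges, invoking \cite[Theorem 2.1]{Bhakta2021} only for the single remaining component. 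Your version buys a cleaner and more symmetric case analysis ($k_1\le 1$, not both positive, etc.); the paper's version avoids having to justify the splitting of the derivative of the full functional. Both arguments hinge on exactly the same ``hard'' step, which you correctly single out as the main obstacle: a lower bound on the energy of the nontrivial weak limit. In the paper this is supplied in two pieces: (a) the ground-state identity $\tilde c_\nu=\mathscr{C}(\lambda_2,t,s)$ for small $\nu$ (a forward reference to Theorem~\ref{thm:groundstatesalphabeta}, whose proof uses only Lemma~\ref{lemmaPS2}, so there is no circularity), which rules out a doubly nontrivial limit of energy below $\mathscr{C}(\lambda_2,t,s)$; and (b) the estimate obtained from the first equation, \eqref{HS_ineq_l}, H\"older and Lemma~\ref{lem_algebr} with $\theta=\alpha\ge 2$, showing that a limit with both components nontrivial has energy at least $\mathscr{C}(\lambda_1,t,s)$ for $\nu$ small. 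Your sketch of this step names the right tools, but note that the conclusion you need in the case $(k_1,k_2)=(1,0)$ is the exact bound $\mathcal{J}^+_\nu(\tilde u,\tilde v)\ge\mathscr{C}(\lambda_2,t,s)$, not merely $(1-\varepsilon)\mathscr{C}(\lambda_2,t,s)$; this is precisely what Theorem~\ref{thm:groundstatesalphabeta} provides. Two further small points you should make explicit: the reduction $u_n^-\to 0$, $v_n^-\to 0$ in $\mathcal{D}^{s,2}(\mathbb{R}^N)$ (done in the paper by testing with $(u_n^-,0)$ and using \eqref{hardy_inequality}) is needed so that the bubbles extracted from the remainder are the \emph{positive} extremals and hence each carries exactly $\mathscr{C}(\lambda_j,t,s)$; and the classification of semi-trivial limits $(0,\tilde v)$ as $\tilde v=z^{(2)}_\mu$ (energy exactly $\mathscr{C}(\lambda_2,t,s)$) uses that nonnegative solutions of \eqref{eq:uncoupled} are the extremals of \eqref{S1}. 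With these points filled in, your argument closes.
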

\begin{proof}
Arguing as in Lemma~\ref{lem_PSBounded}, any PS sequence for $\mathcal{J}_\nu^+$ is bounded in $\mathbb{D}$. Thus,  there exists a subsequence $\{(u_n,v_n)\}\rightharpoonup(\tilde{u},\tilde{v}) \in \mathbb{D}$. Moreover,  we can assume that $\{(u_n,v_n)\}$ is a non-negative PS sequence at level $c$ for $\mathcal{J}_\nu^+$. Actually, since $(\mathcal{J}^+_\nu)'(u_n,v_n)\to0$ as $n\to+\infty$, we get
\begin{equation*}
\begin{split}
\left\langle (\mathcal{J}^+_\nu)'(u_n,v_n){\big|} (u_n^-,0)\right\rangle&= \iint_{\mathbb{R}^{2N}}\frac{(u_n(x)-u_n(y))(u_n^-(x)-u_n^-(y))}{|x-y|^{N+2s}}dydx - \lambda_1 \int_{\mathbb{R}^N} \dfrac{(u_n^-)^2}{|x|^2} \, dx\\
&\geq \iint_{\mathbb{R}^{2N}}\frac{(u_n^-(x)-u_n^-(y))^2}{|x-y|^{N+2s}}dydx - \lambda_1 \int_{\mathbb{R}^N} \dfrac{(u_n^-)^2}{|x|^2} \, dx\\
&\geq \left(1-\frac{\lambda_1}{\Lambda_{N,s}}\right)\iint_{\mathbb{R}^{2N}}\frac{(u_n^-(x)-u_n^-(y))^2}{|x-y|^{N+2s}}dydx\to 0\qquad\ \text{as }n\to+\infty,
\end{split}
\end{equation*}
where we have used the fact $u^+u^-\equiv0$ together with the fractional Hardy inequality \eqref{hardy_inequality}. Thus, $u_n^-\to 0$ strongly in $\mathcal{D}^{s,2} (\mathbb{R}^N)$. Analogously, we can also deduce $v_n^-\to 0$.

As in the proof of Lemma~\ref{lemmaPS2}, the concentration \eqref{con-comp} holds up to a subsequence and for some positive numbers $\mu_0$, $\rho_0$, $\eta_0$, $\overline{\mu}_0$, $\overline{\rho}_0$ and $\overline{\eta}_0$ satisfying the inequalities \eqref{ineq:cca}, \eqref{ineq:cc0a}, \eqref{ineq:cca2} and \eqref{ineq:cc0a2}. The concentration at infinity is codified by the values $\mu_\infty$, $\rho_\infty$, $\overline{\mu}_\infty$ and $\overline{\rho}_\infty$ defined as in \eqref{con-compinfty} and such that \eqref{ineq:coninf_a} and \eqref{ineq:coninf} also hold.

Next, we prove that at least one of the components of the PS sequence strongly converges in $\mathcal{D}^{s,2}(\mathbb{R}^N)$. To that end, we start by proving that,
\begin{equation}\label{claimPS21}
\mbox{ either } u_n\to \tilde u\  \mbox{  strongly in } L^{2_s^*(t)}(\mathbb{R}^N) \qquad \mbox{or} \qquad v_n\to \tilde v\ \mbox{  strongly in }L^{2_s^*(t)}(\mathbb{R}^N).
\end{equation}
Assume, by contradiction, that $\{u_n\}$ and $\{v_n\}$ do not strongly converge in $L^{2_s^*(t)}(\mathbb{R}^N)$ so that $\rho_{j}>0$ and $\overline{\rho}_k>0$ for $j,k\in \{0, \infty\}$. However, this assumption produces a contradiction with \eqref{PS1} since, as in \eqref{eq:limit2}, by using \eqref{con-comp}, \eqref{ineq:cca}, \eqref{ineq:cc0a}, \eqref{ineq:cca2},  \eqref{ineq:cc0a2}, \eqref{ineq:coninf_a} and \eqref{ineq:coninf} applied in \eqref{ineq:l} we get
\begin{equation*}
\begin{split}
c=&\left( \frac{1}{2} -\frac{1}{\alpha+\beta} \right)\|(u_n,v_n)\|_{\mathbb{D}}^2+\left( \frac{1}{\alpha+\beta}-\frac{1}{2^*_{s}(t)} \right)\left(\int_{\mathbb{R}^N} \frac{|u_n|^{2^*_{s}(t)}}{|x|^{t}}dx + \int_{\mathbb{R}^N} \frac{|v_n|^{2^*_{s}(t)}}{|x|^{t}}dx\right)+o(1)\\
\ge&\left( \frac{1}{2} -\frac{1}{\alpha+\beta} \right)\left(\mathcal{S}(\lambda_1,t,s)\rho_j^{\frac{2}{2_{s}^*(t)}} +\mathcal{S}(\lambda_2,t,s) \overline{\rho}_k^{\frac{2}{2_{s}^*(t)}}\right)+\left( \frac{1}{\alpha+\beta}-\frac{1}{2^*_{s}(t)} \right)(\rho_j+\overline{\rho}_k) \\
\ge&\, \frac{2s-t}{2(N-t)}\left(\left[\mathcal{S}(\lambda_1,t,s)\right]^{\frac{N-t}{2s-t}}+\left[\mathcal{S}(\lambda_2,t,s)\right]^{\frac{N-t}{2s-t}}\right)\\
=&\,\mathscr{C}(\lambda_1,t,s)+\mathscr{C}(\lambda_2,t,s).
\end{split}
\end{equation*}
Thus \eqref{claimPS21} is proved. Assume that the sequence $\{u_n\}$ strongly converges in $L^{2_s^*(t)}(\mathbb{R}^N)$. Since
$
\|u_n-\tilde u\|_{\lambda_1}^2 = \left\langle \mathcal{J}_\nu'(u_n,v_n){\big|} (u_n-\tilde u,0) \right\rangle + o(1),
$
it follows that $u_n\to \tilde u$ in $\mathcal{D}^{s,2}(\mathbb{R}^N)$. Repeating the argument for $\{v_n\}$ we conclude 
\begin{equation*}
\mbox{ either } u_n\to  \tilde u\  \mbox{ in } \mathcal{D}^{s,2}(\mathbb{R}^N) \qquad \mbox{ or } \qquad v_n\to  \tilde v \ \mbox{ in } \mathcal{D}^{s,2}(\mathbb{R}^N).
\end{equation*}

To complete the proof, we prove that actually both $\{u_n\}$ and $\{v_n\}$ strongly converge in $\mathcal{D}^{s,2}(\mathbb{R}^N)$. The proof is splitted into two cases.\newline

\textbf{Case 1}: The sequence $\{v_n\}$ strongly converges to $\tilde{v}$ in $\mathcal{D}^{s,2}(\mathbb{R}^N)$.\hfill\newline 
Our goal is then to prove that $\{u_n\}$ strongly converges to $\tilde{u}$ in $\mathcal{D}^{s,2}(\mathbb{R}^N)$. 
First, we prove that the sequence $\{v_n\}$ strongly converges to a nontrivial limit $\tilde{v}\not \equiv 0$. Assume by contradiction that $\tilde{v}\equiv 0$, then $\tilde{u}\ge 0 $ and $\tilde{u}$ verifies \eqref{eq:uncoupled} with $j=1$. Thus, $\tilde{u}=z_\mu^{(1)}$ for some $\mu>0$ and $\tilde{u}$ satisfies \eqref{relation}, i.e. 
\[\int_{\mathbb{R}^N}\frac{ \tilde{u}^{2_{s}^*(t)}}{|x|^{t}} dx =[ \mathcal{S}(\lambda_1,t,s)]^{\frac{N-t}{2s-t}}.\]
Next, assume that none of the subsequences of $\{u_n\}$ converge. A concentration at more than one point, i.e., at $0$ and $\infty$, produces a contradiction with \eqref{PS1} since, because of \eqref{ineq:cca}, \eqref{ineq:cc0a}, \eqref{ineq:coninf_a}, \eqref{ineq:coninf} and \eqref{ineq:l}, it follows that
\begin{equation*}
c\ge \frac{2s-t}{N-t}\left[\mathcal{S}(\lambda_1,t,s)\right]^{\frac{N-t}{2s-t}}=2\mathscr{C}(\lambda_1,t,s)\ge \mathscr{C}(\lambda_1,t,s)+\mathscr{C}(\lambda_2,t,s).
\end{equation*}
Thus, the sequence $\{u_n\}$ concentrates only at $0$ or at $\infty$. Nevertheless, if $\{u_n\}$ concentrates at one point we get a contradiction with \eqref{PS1} since, by \eqref{ineq:l} jointly with \eqref{ineq:cca}, \eqref{ineq:cc0a} and the hypothesis $\mathscr{C}(\lambda_1,t,s)\geq\mathscr{C}(\lambda_2,t,s)$ we have
\begin{equation}\label{eq:0}
\begin{split}
c&\ge \frac{2s-t}{2(N-t)}\left( \int_{\mathbb{R}^N}\frac{ \tilde{u}^{2_{s}^*(t)}}{|x|^{t}} dx +\left[\mathcal{S}(\lambda_1,t,s)\right]^{\frac{N-t}{2s-t}}  \right)=2\mathscr{C}(\lambda_1,t,s) \ge \mathscr{C}(\lambda_1,t,s)+\mathscr{C}(\lambda_2,t,s).
\end{split}
\end{equation}
In case of having $\tilde{v}\equiv 0$ and $\tilde{u}\equiv 0$, then
\begin{equation*}
(-\Delta)^s u_n - \lambda_1 \frac{u_n}{|x|^{2s}}-\frac{u_n^{2_{s}^*(t)-1}}{|x|^{t}}=o(1) \qquad \mbox{ in the dual space } \left( \mathcal{D}^{s,2} (\mathbb{R}^N)\right)',
\end{equation*}
and, as $\{u_n\}$ concentrates at most one point, namely at $0$ or at $\infty$, it follows that 
\begin{equation}\label{eq:1}
c=\mathcal{J}_\nu(u_n,v_n)+o(1)=\frac{2s-t}{2(N-t)} \int_{\mathbb{R}^N} \frac{ u_n^{2_{s}^*(t)}}{|x|^{t}}dx +o(1)\to \frac{2s-t}{2(N-t)} \rho_j.
\end{equation}
As the concentration takes place at $0$ or $\infty$, the sequence $\{u_n\}$ is a positive PS sequence for $\mathcal{J}_1(u)$, defined in \eqref{funct:Ji}. Because of the characterization of PS sequences for the functional $J_j$ provided in \cite[Theorem 2.1]{Bhakta2021}, it follows that $\rho_j\ge l [\mathcal{S}(\lambda_1,t,s)]^{\frac{N-t}{2s-t}}$ and, thus, from \eqref{eq:1} we conclude
\begin{equation*}
\begin{split}
c&=\mathcal{J}_\nu(u_n,v_n)+o(1)=\mathcal{J}_1(u_n)+o(1)\to\ell \mathscr{C}(\lambda_1,t,s),
\end{split}
\end{equation*}
with $\ell \in \mathbb{N}\cup\{0\}$ contradicting \eqref{PS1}. Then, we conclude the strong convergence of $\{v_n\}$ to a nontrivial limit $\tilde{v}\gneq 0$ in $\mathbb{R}^N$.\newline
To continue, we prove that $u_n\rightharpoonup\tilde{u}$ in $\mathcal{D}^{s,2}(\mathbb{R}^N)$ such that $\tilde{u}\not \equiv 0$. If one assumes $\tilde{u}=0$, arguing as above we also get $\tilde{v}=z_{\mu}^{(2)}$ so that, as in \eqref{eq:0}, we get a contradiction with \eqref{PS1}. Thus, $\tilde{u},\tilde{v} \gneq 0$. Next, taking $n\to+\infty$ in the equality
\begin{equation*}
\begin{split}
c=&\ \mathcal{J}_\nu(u_n,v_n)-\frac{1}{2}\left\langle \mathcal{J}_\nu'(u_n,v_n){\big|} (u_n,v_n) \right\rangle + o(1)\\
 =&\  \frac{2s-t}{2(N-t)}\left( \int_{\mathbb{R}^N} \frac{ u_n^{2_{s}^*(t)}}{|x|^{t}}dx+\int_{\mathbb{R}^N} \frac{ v_n^{2_{s}^*(t)}}{|x|^{t}}dx\right) +\nu \left( \frac{\alpha+\beta-2}{2} \right)  \int_{\mathbb{R}^N} h(x)\frac{u_n^{\alpha}\, v_n^{\beta}}{|x|^t} dx + o(1),
\end{split}
\end{equation*}
we find, for $j \in \{0, \infty\}$,
\begin{equation}\label{eqlemmaPS10}
\begin{split}
c=&  \  \frac{2s-t}{2(N-t)}\left( \int_{\mathbb{R}^N} \frac{ \tilde{u}^{2_{s}^*(t)}}{|x|^{t}}dx+ \rho_j+  \int_{\mathbb{R}^N} \frac{ \tilde{v}^{2_{s}^*(t)}}{|x|^{t}}dx\right)+\nu \left( \frac{\alpha+\beta-2}{2} \right)  \int_{\mathbb{R}^N} h(x)  \frac{\tilde{u}^{\alpha}\, \tilde{v}^{\beta}}{|x|^t} dx.
\end{split}
\end{equation}
On the other hand, as $\left\langle \mathcal{J}_\nu'(u_n,v_n){\big|} (\tilde{u},\tilde{v}) \right\rangle \to 0$ as $n\to\infty$, we also get
\begin{equation*}
\|(\tilde{u},\tilde{v}) \|_{\mathbb{D}}=  \int_{\mathbb{R}^N}\frac{ \tilde{u}^{2_{s}^*(t)}}{|x|^{t}}dx+\int_{\mathbb{R}^N}\frac{ \tilde{v}^{2_{s}^*(t)}}{|x|^{t}}dx  + \nu(\alpha+\beta) \int_{\mathbb{R}^N} h(x) \frac{\tilde{u}^\alpha \tilde{v}^\beta}{|x|^t} dx,
\end{equation*}
which is equivalent to say that $(\tilde{u},\tilde{v}) \in\mathcal{N}_\nu$. Then,
\begin{equation}\label{eqlemmaPS11}
\begin{split}
 \mathcal{J}_\nu(\tilde{u},\tilde{v})=&\ \frac{2s-t}{2(N-t)}\left( \int_{\mathbb{R}^N} \frac{ \tilde{u}^{2_{s}^*(t)}}{|x|^{t}}dx+ \int_{\mathbb{R}^N} \frac{ \tilde{v}^{2_{s}^*(t)}}{|x|^{t}}dx\right) + \nu \left( \frac{\alpha+\beta-2}{2} \right)  \int_{\mathbb{R}^N} h\frac{\tilde{u}^{\alpha}\, \tilde{v}^{\beta}}{|x|^t} dx\\
 \le&\ c<\mathscr{C}(\lambda_1,t,s)+\mathscr{C}(\lambda_2,t,s).
 \end{split}
\end{equation}
Using \eqref{eqlemmaPS10}, \eqref{eqlemmaPS11}, \eqref{Nnueq} \eqref{ineq:cca}, \eqref{ineq:cc0a}, \eqref{ineq:l} and \eqref{PS1} we get that
\begin{equation*}
\mathcal{J}_\nu(\tilde{u},\tilde{v})= c - \frac{2s-t}{(N-t)}\rho_j
<\ \mathscr{C}(\lambda_1,t,s)+\mathscr{C}(\lambda_2,t,s) - \frac{2s-t}{2(N-t)}\left[\mathcal{S}(\lambda_1,t,s)\right]^{\frac{N-t}{2s-t}}=\mathscr{C}(\lambda_2,t,s).
\end{equation*}
The above expression implies that
$$
\tilde{c}_\nu= \inf_{(u,v)\in\mathcal{N}_\nu} \mathcal{J}_\nu(u,v) < \mathscr{C}(\lambda_2,t,s).
$$
However, for $\nu$ sufficiently small, Theorem~\ref{thm:groundstatesalphabeta} states that $\tilde c_\nu =  \mathscr{C}(\lambda_2,t,s)$, which contradicts the former inequality. Thus, we have proved that $u_n\to \tilde{u}$ strongly in $\mathcal{D}^{s,2}(\mathbb{R}^N)$.

\textbf{Case 2}: The sequence $\{u_n\}$ strongly converges to $\tilde{u}$ in $\mathcal{D}^{s,2}(\mathbb{R}^N)$.\hfill\newline
Now, we prove that $\{v_n\}$ strongly converges to $\tilde{v}$ in $\mathcal{D}^{s,2}(\mathbb{R}^N)$. As before, we start by proving $\tilde{u}\not \equiv 0$. Assume $\tilde{u} \equiv 0$ so that $\{v_n\}$ is a PS sequence for the energy functional $\mathcal{J}_2$ defined in \eqref{funct:Ji} at energy level $c$. We continue by proving that a concentration for $\{v_n\}$ under the assumption $\tilde{u} \equiv 0$ produces a contradiction with the energy assumption \eqref{PS2}. Suppose that none of the subsequences of $\{v_n\}$ converge. Since $v_n\rightharpoonup\tilde{v}$ in $\mathcal{D}^{s,2}(\mathbb{R}^N)$, then $\tilde{v}$ satisfies the \eqref{eq:uncoupled}. By the characterization of the PS sequences provided in \cite[Theorem 2.1]{Bhakta2021}, one has
\begin{equation*}
\begin{split}
c& = \lim_{n\to+\infty} \mathcal{J}_2 (v_n) =  \mathcal{J}_2 (z_\mu^{(2)})+\ell\mathscr{C}(\lambda_2,t,s),
\end{split}
\end{equation*}
for some $\ell \in \mathbb{N}\cup\{0\}$. This clearly contradicts \eqref{PS2} as, in case of having $\tilde{v}\equiv0$, then $c=\ell \mathscr{C}(\lambda_2,t,s)$ and, if $\tilde{v}\not\equiv0$, then $\tilde{v}=z_\mu^{(2)}$ for some $\mu>0$ so that, by \eqref{csemi} and \eqref{csemi2}, we find $c=(\ell+1) \mathscr{C}(\lambda_2,t,s)$. Hence, $\tilde{u} \not \equiv 0$. Analogously, a concentration argument rules out the case $\tilde v \equiv 0$ by producing a contradiction with \eqref{PS1} since, under this assumption, $\tilde{u}$ solves \eqref{eq:uncoupled} so that $u=z_\mu^{(1)}$ for some $\mu>0$ and, thus,
\begin{equation*}
\begin{split}
c&\ge \frac{2s-t}{2(N-t)}\left( \int_{\mathbb{R}^N}\frac{ \tilde{u}^{2_{s}^*(t)}}{|x|^{t}} dx +\left[\mathcal{S}(\lambda_2,t,s)\right]^{\frac{N-t}{2s-t}}  \right)= \mathscr{C}(\lambda_1,t,s)+\mathscr{C}(\lambda_2,t,s).
\end{split}
\end{equation*}
Therefore the limit is a nontrivial pair $\tilde{u},\tilde{v}\not \equiv 0$. To finish the proof we prove that no concentration phenomena can take place for $\{v_n\}$. Suppose that $v_n$ does not strongly converge in $\mathcal{D}^{s,2}(\mathbb{R}^N)$, i.e., there exists at least one $k\in \{0, \infty\}$ such that $\overline{\rho}_k>0$. Since, by \eqref{eqlemmaPS10}, we have
\begin{equation*}
\begin{split}
c=&\ \frac{2s-t}{2(N-t)}\left( \int_{\mathbb{R}^N}\frac{ \tilde{u}^{2_{s}^*(t)}}{|x|^{t}} dx +\int_{\mathbb{R}^N}\frac{|\tilde{v}|^{2_{s}^*(t)}}{|x|^{t}}dx +\overline{\rho}_0+\overline{\rho}_{\infty} \right) +\nu \left( \frac{\alpha+\beta-2}{2} \right)  \int_{\mathbb{R}^N} h(x)  \frac{\tilde{u}^{\alpha}\, \tilde{v}^{\beta}}{|x|^t} dx,
\end{split}
\end{equation*}
because of \eqref{eqlemmaPS11}, \eqref{ineq:cca}, \eqref{ineq:cc0a} and \eqref{PS1}, it follows that
\begin{equation}\label{eqlemmaPS12}
\begin{split}
\mathcal{J}_\nu(\tilde{u},\tilde{v})&= c - \frac{2s-t}{2(N-t)}\left( \overline{\rho}_0 + \overline{\rho}_\infty\right) < \mathscr{C}(\lambda_1,t,s)+\mathscr{C}(\lambda_2,t,s) - \frac{2s-t}{2(N-t)}\left[\mathcal{S}(\lambda_2,t,s)\right]^{\frac{N-t}{2s-t}}\\
& =  \mathscr{C}(\lambda_1,t,s). 
\end{split}
\end{equation}
Because of \eqref{HS_ineq_l} and the first equation of \eqref{system},
\begin{equation}\label{eqlemmaPS13}
\begin{split}
\int_{\mathbb{R}^N} \frac{\tilde{u}^{2_{s}^*(t)}}{|x|^{t}} \, dx + \nu \int_{\mathbb{R}^N} h(x)\frac{\tilde{u}^{\alpha}\, \tilde{v}^{\beta}}{|x|^t} dx 
 &\ge \mathcal{S}(\lambda_1,t,s) \left(\int_{\mathbb{R}^N} \frac{\tilde{u}^{2_{s}^*(t)}}{|x|^{t}} \, dx\right)^{\frac{2}{2_{s}^*(t)}}.
\end{split}
\end{equation}
Since we are in the subcritical regime, $\alpha+\beta<2_s^*$, by\eqref{H} we have $h\in L^1(\mathbb{R}^N)\cap L^{\infty}(\mathbb{R}^N)$ so that, by H\"older's inequality, we have
\begin{equation}\label{Holder}
 \int_{\mathbb{R}^N} h(x)\frac{\tilde{u}^{\alpha}\tilde{v}^{\beta}}{|x|^t} dx
  \leq C(h)  \left( \int_{\mathbb{R}^N} \frac{\tilde{u}^{2^*_{s}(t)}}{|x|^{t}} dx\right)^{\frac{\alpha}{2^*_{s}(t)}}  \left( \int_{\mathbb{R}^N} \frac{\tilde{v}^{\beta }}{|x|^{t}} dx\right)^{\frac{\beta}{2^*_{s}(t)}}. 
\end{equation}
By combining \eqref{eqlemmaPS11} and \eqref{Holder}, from \eqref{eqlemmaPS13} it follows that
\begin{equation}\label{eqlemmaPS14}
\sigma_1 + C \nu \left(\int_{\mathbb{R}^N} \frac{\tilde{u}^{2_{s}^*(t)}}{|x|^{t}} \, dx\right)^{\frac{\alpha}{2_{s}^*(t)}}\ge \mathcal{S}(\lambda_1,t,s) \left(\int_{\mathbb{R}^N} \frac{\tilde{u}^{2_{s}^*(t)}}{|x|^{t}} \, dx\right)^\frac{2}{2_{s}^*(t)}.
\end{equation}
Moreover, since $\tilde{v}\not \equiv 0$, we can choose $\tilde \varepsilon>0$, such that
\begin{equation*}
\frac{2s-t}{2(N-t)}\int_{\mathbb{R}^N}\frac{ \tilde{v}^{2_{s}^*(t)}}{|x|^{t}} \, dx\ge \tilde \varepsilon.
\end{equation*} 
Then, taking $\varepsilon>0$ such that $\tilde \varepsilon\ge \varepsilon \mathscr{C}(\lambda_1,t,s)$, by \eqref{eqlemmaPS14} and Lemma~\ref{lem_algebr} with
$\sigma\vcentcolon= \int_{\mathbb{R}^N} \frac{\tilde{u}^{2_{s}^*(t)}}{|x|^{t}} \, dx,$
there exists $\tilde \nu>0$ such that, for any $0<\nu\leq \tilde{\nu}$,
\begin{equation*}
\int_{\mathbb{R}^N} \frac{\tilde{u}^{2_{s}^*(t)}}{|x|^{t}} \, dx\ge (1-\varepsilon)[\mathcal{S}(\lambda_1,t,s)]^{\frac{N-t}{2s-t}}.
\end{equation*}
The former estimates combined with \eqref{eqlemmaPS11} produces
\begin{equation*}
\mathcal{J}_\nu(\tilde{u},\tilde{v}) \ge (1-\varepsilon)\frac{2s-t}{2(N-t)}[\mathcal{S}(\lambda_1,t,s)]^{\frac{N-t}{2s-t}} + \tilde{\varepsilon} =\mathscr{C}(\lambda_1,t,s),
\end{equation*}
in contradiction with \eqref{eqlemmaPS12}. Thus, $v_n\to \tilde{v}$ strongly in $\mathcal{D}^{s,2}(\mathbb{R}^N)$.
\end{proof}
The case $\beta\ge2$, and $\mathscr{C}(\lambda_1,t,s)\geq\mathscr{C}(\lambda_2,t,s)$ can be derived in a similar way.
\begin{lemma}
Assume that $\alpha+\beta<2_{s}^*(t)$, $\beta\ge2$, and $\mathscr{C}(\lambda_1,t,s)\geq\mathscr{C}(\lambda_2,t,s)$. Then, there exists $\tilde{\nu}>0$ such that, if $0<\nu\leq\tilde{\nu}$ and $\{(u_n,v_n)\} \subset \mathbb{D}$ is a PS sequence for $\mathcal{J}^+_\nu$ at level $c\in\mathbb{R}$ such that
\begin{equation}\label{PS2a}
\mathscr{C}(\lambda_2,t,s)<c<\mathscr{C}(\lambda_1,t,s)+\mathscr{C}(\lambda_2,t,s),
\end{equation}
and
\begin{equation}\label{PS2aa}
c\neq \ell \mathscr{C}(\lambda_1,t,s) \quad \mbox{ for every } \ell \in \mathbb{N}\setminus \{0\},
\end{equation}
then $(u_n,v_n)\to(\tilde{u},\tilde{v}) \in \mathbb{D}$ up to subsequence.
\end{lemma}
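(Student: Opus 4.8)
The plan is to run the argument of Lemma~\ref{lemmaPS1} with the roles of the two components interchanged, i.e. replacing the triple $(u,\alpha,\lambda_1)$ by $(v,\beta,\lambda_2)$ and conversely, and invoking \eqref{PS2a}--\eqref{PS2aa} in place of \eqref{PS1}--\eqref{PS2}. First I would note, as in Lemma~\ref{lem_PSBounded}, that the PS sequence is bounded in $\mathbb{D}$, extract a weak limit $(u_n,v_n)\rightharpoonup(\tilde u,\tilde v)$, and reduce to a nonnegative sequence by testing $(\mathcal{J}^+_\nu)'(u_n,v_n)$ against $(u_n^-,0)$ and $(0,v_n^-)$ and invoking the fractional Hardy inequality \eqref{hardy_inequality}. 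Then I would set up the concentration--compactness representation \eqref{con-comp} together with the concentration-at-infinity quantities \eqref{con-compinfty}, so that the mass coefficients $\rho_j,\overline\rho_k$ ($j,k\in\{0,\infty\}$) satisfy \eqref{ineq:cca}, \eqref{ineq:cc0a}, \eqref{ineq:cca2}, \eqref{ineq:cc0a2}, \eqref{ineq:coninf_a} and \eqref{ineq:coninf}.

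The first substantive step is the analogue of \eqref{claimPS21}: at least one component converges strongly in $L^{2_s^*(t),t}(\mathbb{R}^N)$. If not, some $\rho_j>0$ and some $\overline\rho_k>0$, and inserting \eqref{ineq:cca}, \eqref{ineq:cc0a}, \eqref{ineq:cca2}, \eqref{ineq:cc0a2} into \eqref{ineq:l} gives $c\ge\mathscr{C}(\lambda_1,t,s)+\mathscr{C}(\lambda_2,t,s)$, contradicting the upper bound in \eqref{PS2a}; this step is insensitive to which exponent is $\ge2$. The identity $\|v_n-\tilde v\|_{\lambda_2,s}^2=\langle\mathcal{J}_\nu'(u_n,v_n)\,|\,(0,v_n-\tilde v)\rangle+o(1)$, and its counterpart for $u_n$, upgrades $L^{2_s^*(t),t}$-convergence of a component to $\mathcal{D}^{s,2}$-convergence, and I then split according to which component converges.

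When $\{v_n\}$ converges strongly I would argue as in Case~2 of Lemma~\ref{lemmaPS1}, with the components exchanged. The profile characterization \cite[Theorem 2.1]{Bhakta2021} disposes of the vanishing sub-cases: $\tilde v\equiv0$ makes $\{u_n\}$ a PS sequence for $\mathcal{J}_1$, forcing $c=\ell\,\mathscr{C}(\lambda_1,t,s)$ and contradicting \eqref{PS2aa}, while $\tilde u\equiv0$ forces $\tilde v=z_\mu^{(2)}$ and $c=\mathscr{C}(\lambda_2,t,s)+\ell\,\mathscr{C}(\lambda_1,t,s)$ for some integer $\ell\ge0$, excluded by the strict bounds in \eqref{PS2a}. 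With $\tilde u,\tilde v\not\equiv0$ and $(\tilde u,\tilde v)\in\mathcal{N}_\nu$, a concentration of $\{u_n\}$ would give, through \eqref{ineq:cc0a}, \eqref{ineq:coninf} and the splitting \eqref{eqlemmaPS10}--\eqref{eqlemmaPS11}, the bound $\mathcal{J}_\nu(\tilde u,\tilde v)=c-\tfrac{2s-t}{2(N-t)}\rho_j<\mathscr{C}(\lambda_2,t,s)$; on the other hand, testing the second equation of \eqref{system} against $\tilde v$, applying \eqref{HS_ineq_l}, the H\"older bound \eqref{Holder} and Lemma~\ref{lem_algebr} now with $\theta=\beta\ge2$ and $\sigma=\int_{\mathbb{R}^N}\tfrac{\tilde v^{2_s^*(t)}}{|x|^t}\,dx$, yields $\sigma\ge(1-\varepsilon)[\mathcal{S}(\lambda_2,t,s)]^{\frac{N-t}{2s-t}}$ for $\nu$ small, hence $\mathcal{J}_\nu(\tilde u,\tilde v)\ge\mathscr{C}(\lambda_2,t,s)$, a contradiction that forces $u_n\to\tilde u$ strongly.

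The remaining case, in which $\{u_n\}$ converges strongly, is the main obstacle, precisely because the surviving component is now governed by $\beta$ and not by $\alpha$: concentration of $\{v_n\}$ cannot be excluded by applying Lemma~\ref{lem_algebr} to $\tilde u$, since $\alpha\ge2$ is not assumed. Here I would mirror Case~1 of Lemma~\ref{lemmaPS1}, using \cite[Theorem 2.1]{Bhakta2021} to reduce the vanishing sub-cases to \eqref{PS2aa} and the strict endpoints of \eqref{PS2a}, and in the genuinely coupled sub-case exploiting $(\tilde u,\tilde v)\in\mathcal{N}_\nu$ against the ground-state identity $\tilde c_\nu=\mathscr{C}(\lambda_2,t,s)$, valid for $\nu$ small by Theorem~\ref{thm:groundstatesalphabeta}. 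Closing this case, and in particular ruling out a multi-bubble $\{v_n\}$ that resonates at multiples of $\mathscr{C}(\lambda_2,t,s)$, is the delicate point: it is exactly where the ordering $\mathscr{C}(\lambda_1,t,s)\ge\mathscr{C}(\lambda_2,t,s)$, the smallness of $\nu$, and the nonresonance hypothesis \eqref{PS2aa} must be combined with care.
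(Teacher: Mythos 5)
Your strategy is exactly the paper's intended one: the paper gives no written proof of this lemma beyond the remark that it ``can be derived in a similar way'' from Lemma~\ref{lemmaPS1}, i.e.\ by exchanging the roles of $(u,\alpha,\lambda_1)$ and $(v,\beta,\lambda_2)$. The parts you carry out are correct transcriptions: boundedness and reduction to a nonnegative sequence, the concentration--compactness setup, the exclusion of simultaneous non-compactness of both components via the upper bound in \eqref{PS2a}, the use of \cite[Theorem 2.1]{Bhakta2021} to handle the vanishing sub-cases through \eqref{PS2aa} and the strict endpoints of \eqref{PS2a}, and the case in which $\{v_n\}$ converges strongly, where Lemma~\ref{lem_algebr} is now applied to the second equation with $\theta=\beta\ge2$ and $\sigma=\int_{\mathbb{R}^N}\tilde v^{2_s^*(t)}|x|^{-t}\,dx$; none of these steps needs the ordering of $\mathscr{C}(\lambda_1,t,s)$ and $\mathscr{C}(\lambda_2,t,s)$.

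The case you leave open --- $\{u_n\}$ strongly convergent, $\{v_n\}$ possibly concentrating --- is genuinely not closable under the hypotheses as printed, and the obstruction you isolate is real; but it is a misprint in the statement rather than a missing idea. The lemma is the mirror image of Lemma~\ref{lemmaPS1} only if the ordering hypothesis is swapped as well, i.e.\ $\mathscr{C}(\lambda_1,t,s)\le\mathscr{C}(\lambda_2,t,s)$; this is precisely how the $\beta\ge2$ case is written in Lemma~\ref{lemcritic}~iii) and is the regime of Theorem~\ref{MPgeom}~ii), where this lemma is invoked. With that ordering the swap is exact: Theorem~\ref{thm:groundstatesalphabeta}~ii) gives $\tilde c_\nu=\mathscr{C}(\lambda_1,t,s)$ for $\nu$ small, while concentration of $\{v_n\}$ forces $\mathcal{J}_\nu(\tilde u,\tilde v)=c-\tfrac{2s-t}{2(N-t)}(\overline{\rho}_0+\overline{\rho}_\infty)<\mathscr{C}(\lambda_1,t,s)+\mathscr{C}(\lambda_2,t,s)-\mathscr{C}(\lambda_2,t,s)=\tilde c_\nu$ with $(\tilde u,\tilde v)\in\mathcal{N}_\nu$, a contradiction; your proof then closes with no further input. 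Under the literal hypotheses the statement even admits a counterexample: take $u_n\equiv 0$ and $v_n=z^{(2)}_{1/n}+z^{(2)}_{n}$, a two-bubble Palais--Smale sequence for $\mathcal{J}_2$ with one bubble collapsing at $0$ and one escaping to $\infty$. Since $\alpha>1$ the coupling contributes nothing to $(\mathcal{J}^+_\nu)'(0,v_n)$, so this is a non-convergent PS sequence for $\mathcal{J}^+_\nu$ at level $c=2\mathscr{C}(\lambda_2,t,s)$, which satisfies \eqref{PS2a} whenever $\mathscr{C}(\lambda_2,t,s)<\mathscr{C}(\lambda_1,t,s)$ and satisfies \eqref{PS2aa} unless $2\mathscr{C}(\lambda_2,t,s)$ happens to be an integer multiple of $\mathscr{C}(\lambda_1,t,s)$. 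So: correct the ordering to $\mathscr{C}(\lambda_1,t,s)\le\mathscr{C}(\lambda_2,t,s)$ and your argument is complete as written.
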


\subsection{Critical range $\alpha+\beta=2_s^*(t)$}\hfill\newline
In the critical regime the coupling term comes into play by contributing to the energy of the functional $J_\nu$ through a concentration phenomena. We exclude this posibility by extending Lemmas \ref{lemmaPS2} and \ref{lemmaPS1} to deal with $\alpha+\beta=2_s^*(t)$. In order to cancel out the contribution produced by a concentration at the origin of the coupling term we further assume 
\begin{equation}\label{H_1}\tag{$H_1$}
h \mbox{ continuous around $0$ and $\infty$ and }h(0)=\lim_{x\to+\infty} h(x)=0.
\end{equation}
Let us also stress that, due the presence of Hardy-Sobolev critical couplings in system \ref{system}, the concentration take places only at $0$ or at $\infty$. In case of considering $t=0$, the concentration of the coupling term can take place at points $x_j\in\mathbb{R}^N$ different from $0$ or $\infty$. In order to exclude this possibility one can work in the radial setting (so that the concentration is forced to be at $0$ and then hypothesis \eqref{H_1} completes the work) or one can further assume the extra hypothesis $\nu$ small enough, (cf. \cite{Abdellaoui2009}, \cite{Colorado2022}).
\begin{lemma}\label{lemcritic} 
Assume that  $\alpha+\beta=2_{s}^*(t)$ and hypothesis \eqref{H_1} hold. Let $\{(u_n,v_n)\} \subset \mathbb{D}$ be a PS sequence for $\mathcal{J}_\nu$ at level $c\in\mathbb{R}$ such that
\begin{itemize}
\item[i)] either $c$ satisfies \eqref{hyplemmaPS2},
\item[ii)] or $c$ satisfies \eqref{PS1} and \eqref{PS2} if $\alpha\ge 2$ and  $\mathscr{C}(\lambda_1,t,s)\geq\mathscr{C}(\lambda_2,t,s)$,
\item[iii)] or $c$ satisfies \eqref{PS2a} and \eqref{PS2aa} if $\beta\ge 2$ and  $\mathscr{C}(\lambda_1,t,s)\leq\mathscr{C}(\lambda_2,t,s)$.
\end{itemize}
Then, there exists $\tilde{\nu}>0$ such that, for every $0<\nu\leq \tilde{\nu}$, the sequence $(u_n,v_n)\to(\tilde{u},\tilde{v}) \in \mathbb{D}$ up to a subsequence.
\end{lemma}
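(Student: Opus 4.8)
The plan is to reduce the critical case to the concentration--compactness analysis already carried out in Lemma~\ref{lemmaPS2} and Lemma~\ref{lemmaPS1}, isolating the only genuinely new phenomenon: at the critical coupling $\alpha+\beta=2_s^*(t)$ the term $h\,|u|^\alpha|v|^\beta|x|^{-t}$ is no longer of subcritical nature and may itself concentrate at $0$ and at $\infty$. First I would take a bounded PS sequence $\{(u_n,v_n)\}$ for $\mathcal{J}_\nu$ (replacing it, in cases $ii)$ and $iii)$, by the non-negative sequence obtained exactly as in Lemma~\ref{lemmaPS1} by testing against the negative parts), extract a weak limit $(\tilde u,\tilde v)$ and the concentration measures \eqref{con-comp} together with the concentration-at-infinity quantities \eqref{con-compinfty}. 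In addition, I would record the coupling concentration coefficients
\[
\gamma_0=\lim_{\varepsilon\to 0^+}\lim_{n\to+\infty}\int_{\mathbb{R}^N} h\,\frac{|u_n|^\alpha|v_n|^\beta}{|x|^t}\,\varphi_\varepsilon\,dx,\qquad
\gamma_\infty=\lim_{R\to+\infty}\limsup_{n\to+\infty}\int_{|x|>R} h\,\frac{|u_n|^\alpha|v_n|^\beta}{|x|^t}\,dx,
\]
with $\varphi_\varepsilon$ the cut-off \eqref{cutoff}.

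The crux of the proof, and the step where hypothesis \eqref{H_1} is indispensable, is to show that $\gamma_0=\gamma_\infty=0$. Since now $\alpha+\beta=2_s^*(t)$, H\"older's inequality with exponents $\tfrac{2_s^*(t)}{\alpha}$ and $\tfrac{2_s^*(t)}{\beta}$ gives
\[
\int_{\mathbb{R}^N} h\,\frac{|u_n|^\alpha|v_n|^\beta}{|x|^t}\,\varphi_\varepsilon\,dx
\le\Big(\sup_{B_\varepsilon(0)}h\Big)\Big(\int_{\mathbb{R}^N}\frac{|u_n|^{2_s^*(t)}}{|x|^t}\,dx\Big)^{\frac{\alpha}{2_s^*(t)}}\Big(\int_{\mathbb{R}^N}\frac{|v_n|^{2_s^*(t)}}{|x|^t}\,dx\Big)^{\frac{\beta}{2_s^*(t)}}\le C\sup_{B_\varepsilon(0)}h,
\]
the constant $C$ being uniform in $n$ by the boundedness of the PS sequence. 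Letting $n\to+\infty$ and then $\varepsilon\to0^+$, the continuity of $h$ around the origin together with $h(0)=0$ forces $\gamma_0=0$; the same computation with $\varphi_{\infty,\varepsilon}$ from \eqref{cutoffinfi} and $\lim_{|x|\to+\infty}h(x)=0$ yields $\gamma_\infty=0$. This is exactly what is lost at the critical exponent: in the subcritical regime the vanishing was automatic through the factor $\big(\int_{\mathbb{R}^N} h\,\varphi_\varepsilon\,|x|^{-t}\,dx\big)^{1-\frac{\alpha+\beta}{2_s^*(t)}}$ with positive power, whereas here it must be supplied by \eqref{H_1}.

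With $\gamma_0=\gamma_\infty=0$ in hand, testing $\mathcal{J}_\nu'(u_n,v_n)$ against $(u_n\varphi_\varepsilon,0)$ and $(0,v_n\varphi_{\infty,\varepsilon})$ as in \eqref{eq:cc1} reproduces the local bounds $\mu_0-\lambda_1\eta_0-\rho_0\le0$, $\overline\mu_0-\lambda_2\overline\eta_0-\overline\rho_0\le0$ and their analogues at infinity; combined with \eqref{HS_ineq_l} these give back the dichotomies \eqref{ineq:cc0a}, \eqref{ineq:cc0a2} and \eqref{ineq:coninf}. Moreover the energy identity \eqref{eq:limit2}, obtained by subtracting $\tfrac{1}{\alpha+\beta}\langle\mathcal{J}_\nu'(u_n,v_n)\,|\,(u_n,v_n)\rangle$, already cancels the coupling term, so the lower bound \eqref{ineq:l} is unaffected: in the critical regime its second line has coefficient $\tfrac{1}{\alpha+\beta}-\tfrac{1}{2_s^*(t)}=0$, while the first line still carries the positive factor $\tfrac{2s-t}{2(N-t)}$, which is all that the ensuing estimates require.

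From here the three cases close along the subcritical lines. Under $i)$ one argues as in Lemma~\ref{lemmaPS2} to exclude $\rho_0,\overline\rho_0,\rho_\infty,\overline\rho_\infty$ and obtain strong convergence. Under $ii)$ and $iii)$ one runs the alternative \eqref{claimPS21} and the Case~1/Case~2 analysis of Lemma~\ref{lemmaPS1} (and its symmetric counterpart), now with $C(h)=\|h\|_{L^\infty(\mathbb{R}^N)}$ in \eqref{Holder}, invoking Theorem~\ref{thm:groundstatesalphabeta} and Lemma~\ref{lem_algebr} (with $\theta=\alpha\ge 2$, respectively $\theta=\beta\ge2$) to produce the final contradiction for $\nu\le\tilde\nu$ small. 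The main obstacle is thus concentrated entirely in the vanishing of $\gamma_0$ and $\gamma_\infty$: without \eqref{H_1} the concentrated coupling mass could absorb part of the energy quantum $\tfrac{2s-t}{2(N-t)}[\mathcal{S}(\lambda_j,t,s)]^{\frac{N-t}{2s-t}}$, and every subsequent dichotomy would break down.
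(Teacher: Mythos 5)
Your proposal is correct and follows essentially the same route as the paper: both reduce the critical case to Lemmas~\ref{lemmaPS2} and \ref{lemmaPS1} after showing, via H\"older with exponents $2_s^*(t)/\alpha$ and $2_s^*(t)/\beta$ and hypothesis \eqref{H_1}, that the coupling term cannot concentrate at $0$ or at $\infty$ (i.e.\ \eqref{radial:at0} and \eqref{radial:atinf}). The only cosmetic difference is that you bound the coupling integral by $\sup_{B_\varepsilon(0)}h$ times a uniform constant, whereas the paper keeps $h$ inside the H\"older factors and passes to the limit in the measures of \eqref{con-comp}, using $h(0)=0$ to cancel the atoms $\rho_0 h(0)$ and $\overline{\rho}_0 h(0)$; both are valid.
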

\begin{proof}
Following the proof of Lemmas \ref{lemmaPS2} and \ref{lemmaPS1}, in order to avoid concentration at the origin, it is sufficient to show (see \eqref{eq:cc1}) that
\begin{equation}\label{radial:at0}
\lim\limits_{\varepsilon\to0}\limsup\limits_{n\to+\infty}\int_{\mathbb{R}^N}h(x)\frac{|u_n|^\alpha |v_n|^{\beta}}{|x|^{t}}\varphi_{0,\varepsilon}(x)dx=0,
\end{equation}
with $\varphi_{0,\varepsilon}$ as in \eqref{cutoff}. Similarly, the concentration at $\infty$ is excluded once we show,
\begin{equation}\label{radial:atinf}
\lim\limits_{R\to+\infty}\limsup\limits_{n\to+\infty}\int_{|x|>R}h(x)\frac{|u_n|^\alpha |v_n|^{\beta}}{|x|^{t}}\varphi_{\infty,\varepsilon}(x)dx=0,
\end{equation}
with $\varphi_{\infty,\varepsilon}$ as in \eqref{cutoffinfi}. Applying H\"older's inequality and using $\alpha+\beta=2_s^*(t)$, we find
\begin{equation*}
\begin{split}
\int_{\mathbb{R}^N} h\frac{|u_n|^{\alpha}|v_n|^{\beta}}{|x|^t} \varphi_{0,\varepsilon}dx
& \leq\left(\int_{\mathbb{R}^N}h\frac{|u_n|^{2_{s}^*(t)}}{|x|^{t}}\varphi_{0,\varepsilon}dx\right)^{\frac{\alpha}{2_{s}^*(t)}}\left(\int_{\mathbb{R}^N}h\frac{|v_n|^{2_{s}^*(t)}}{|x|^{t}}\varphi_{0,\varepsilon}dx\right)^{\frac{\beta}{2_{s}^*(t)}}.
\end{split}
\end{equation*}
 But, by \eqref{con-comp} and \eqref{H_1}, we have
\begin{equation*}
\lim\limits_{n\to+\infty}\int_{\mathbb{R}^N}h(x)\frac{|u_n|^{2_{s}^*(t)}}{|x|^{t}}\varphi_{0,\varepsilon}dx=\int_{\mathbb{R}^N}h(x)\frac{|\tilde{u}|^{2_{s}^*(t)}}{|x|^{t}}\varphi_{0,\varepsilon}\,dx+\rho_0h(0)
\leq\int_{|x|\leq\varepsilon}h(x)\frac{|\tilde{u}|^{2_{s}^*(t)}}{|x|^{t}} \, dx,
\end{equation*}
and 
\begin{equation*}
\lim\limits_{n\to+\infty}\int_{\mathbb{R}^N}h(x)\frac{|v_n|^{2_{s}^*(t)}}{|x|^{t}}\varphi_{0,\varepsilon}dx=\int_{\mathbb{R}^N}h(x)\frac{|\tilde{v}|^{2_{s}^*(t)}}{|x|^{t}}\varphi_{0,\varepsilon}\,dx+\overline{\rho}_0h(0)\leq\int_{|x|\leq\varepsilon}h(x)\frac{|\tilde{v}|^{2_{s}^*(t)}}{|x|^{t}} \, dx,
\end{equation*}
so \eqref{radial:at0} follows. The proof of \eqref{radial:atinf} follows similarly by using now $\lim\limits_{|x|\to+\infty}h(x)=0$.
\end{proof}

\section{Proofs of main Results}\label{sect:Main-Results}
Once we have ensured the PS condition under a {\it quantization} of the energy levels, we can prove the main results concerning the existence of bound and ground states to \eqref{system}.
\begin{theorem}\label{thm:nugrande}
Assume either $\alpha+\beta<2^*_s(t)$ or $\alpha+\beta=2^*_s(t)$ satisfying \eqref{H_1}. Then there exists $\overline{\nu}>0$ such that the system \eqref{system} has a positive ground state $(\tilde{u},\tilde{v}) \in \mathbb{D}$ for every $\nu>\overline{\nu}$.
\end{theorem}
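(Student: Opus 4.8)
The plan is to realize the ground state as a minimizer of the truncated functional $\mathcal{J}_\nu^+$ over its Nehari manifold $\mathcal{N}_\nu^+$, and then to invoke the compactness already established in Lemmas~\ref{lemmaPS2} and \ref{lemcritic}. Everything hinges on a single energy estimate: that for large $\nu$ the least energy $c_\nu^+:=\inf_{\mathcal{N}_\nu^+}\mathcal{J}_\nu^+$ lies strictly below the semi-trivial threshold $\min\{\mathscr{C}(\lambda_1,t,s),\mathscr{C}(\lambda_2,t,s)\}$. To establish it, I fix any positive $w\in\mathcal{D}^{s,2}(\mathbb{R}^N)$ with $C:=(\alpha+\beta)\int_{\mathbb{R}^N}h\,|w|^{\alpha+\beta}|x|^{-t}\,dx>0$ (possible since $h\gneq0$), set $A:=\|(w,w)\|_{\mathbb{D}}^2$ and $B:=2\int_{\mathbb{R}^N}|w|^{2_s^*(t)}|x|^{-t}\,dx$, and project $(w,w)$ onto $\mathcal{N}_\nu^+$ by a factor $t_\nu>0$. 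By the Nehari identity \eqref{N1} this factor satisfies $A=t_\nu^{2_s^*(t)-2}B+\nu\,t_\nu^{\alpha+\beta-2}C$; since $\alpha+\beta>2$ one gets $t_\nu^{\alpha+\beta-2}\le A/(\nu C)$, so $t_\nu\to0$ as $\nu\to+\infty$. Feeding $t_\nu\to0$ into \eqref{N2} gives
\[
\mathcal{J}_\nu^+(t_\nu w,t_\nu w)=\Big(\tfrac12-\tfrac{1}{\alpha+\beta}\Big)t_\nu^2 A+\Big(\tfrac{1}{\alpha+\beta}-\tfrac{1}{2_s^*(t)}\Big)t_\nu^{2_s^*(t)}B\longrightarrow 0,
\]
uniformly in the subcritical and critical cases. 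Hence $c_\nu^+\to0$, and there is $\overline\nu>0$ with $c_\nu^+<\min\{\mathscr{C}(\lambda_1,t,s),\mathscr{C}(\lambda_2,t,s)\}$ for all $\nu>\overline\nu$.

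With this sub-threshold level available, I would carry out the minimization. Ekeland's variational principle on the natural constraint $\mathcal{N}_\nu^+$, combined with Lemma~\ref{lem_PSNehari}, yields a Palais--Smale sequence $\{(u_n,v_n)\}\subset\mathcal{N}_\nu^+$ for $\mathcal{J}_\nu^+$ at level $c_\nu^+$. Testing with $(u_n^-,0)$ and $(0,v_n^-)$ and using the fractional Hardy inequality \eqref{hardy_inequality}, exactly as in the proof of Lemma~\ref{lemmaPS1}, shows $u_n^-,v_n^-\to0$, so we may take $u_n,v_n\ge0$. Since $c_\nu^+$ lies below $\min\{\mathscr{C}(\lambda_1,t,s),\mathscr{C}(\lambda_2,t,s)\}$, the concentration--compactness analysis of Lemma~\ref{lemmaPS2} (when $\alpha+\beta<2_s^*(t)$) or of Lemma~\ref{lemcritic}(i) (when $\alpha+\beta=2_s^*(t)$, where \eqref{H_1} cancels the coupling concentration at $0$ and $\infty$ for every $\nu$) applies verbatim to $\mathcal{J}_\nu^+$ and forces $(u_n,v_n)\to(\tilde u,\tilde v)$ strongly in $\mathbb{D}$. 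Thus $(\tilde u,\tilde v)\in\mathcal{N}_\nu^+$ is a non-negative critical point of $\mathcal{J}_\nu^+$ with $\mathcal{J}_\nu^+(\tilde u,\tilde v)=c_\nu^+$.

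It then remains to exclude a semi-trivial limit and to promote non-negativity to strict positivity. If $\tilde v\equiv0$, then $(\tilde u,0)\in\mathcal{N}_\nu^+$ forces $\tilde u\in\mathcal{N}_1$, whence $\mathcal{J}_\nu^+(\tilde u,0)=\mathcal{J}_1(\tilde u)\ge\mathscr{C}(\lambda_1,t,s)>c_\nu^+$, contradicting $\mathcal{J}_\nu^+(\tilde u,\tilde v)=c_\nu^+$; by symmetry $\tilde u\not\equiv0$, so both components are nontrivial. Being non-negative, $(\tilde u,\tilde v)$ solves the truncated system \eqref{systemp}, which coincides with \eqref{system} on non-negative pairs; as each component satisfies an inequality of the type $(-\Delta)^s\tilde u-\lambda_1\,\tilde u\,|x|^{-2s}\ge0$ with $\tilde u\gneq0$, the strong maximum principle for $(-\Delta)^s$ gives $\tilde u,\tilde v>0$ in $\mathbb{R}^N\setminus\{0\}$. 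Finally, any non-negative critical point of $\mathcal{J}_\nu$ lies in $\mathcal{N}_\nu^+$ with the same energy, so $c_\nu^+$ equals the ground-state level \eqref{cmin}; hence $(\tilde u,\tilde v)$ is a positive ground state for every $\nu>\overline\nu$.

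The decisive step is the energy estimate of the first paragraph: it is precisely the largeness of $\nu$, acting through the coupling term, that simultaneously pushes $c_\nu^+$ under the compactness threshold of Lemmas~\ref{lemmaPS2} and \ref{lemcritic} and forces the minimizer to charge both components. I expect the most delicate point to be the critical regime $\alpha+\beta=2_s^*(t)$, where the projection scaling competes directly with the critical exponent and where hypothesis \eqref{H_1} is indispensable to prevent the coupling term from concentrating at the origin and at infinity; without it the loss of compactness could reappear even below the semi-trivial threshold.
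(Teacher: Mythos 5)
Your proposal is correct and follows essentially the same route as the paper: the decisive step in both is to project a fixed fully coupled pair onto the Nehari manifold and observe that, since $\alpha+\beta>2$, the projection parameter $t_\nu\to0$ as $\nu\to+\infty$, driving the least energy below $\min\{\mathscr{C}(\lambda_1,t,s),\mathscr{C}(\lambda_2,t,s)\}$ so that Lemmas~\ref{lemmaPS2} and \ref{lemcritic} apply, after which semi-trivial limits are excluded by the same energy comparison and positivity follows from the maximum principle. The only (immaterial) difference is that you enforce non-negativity via the truncated functional $\mathcal{J}_\nu^+$ and the $u_n^-\to0$ argument, whereas the paper works with $\mathcal{J}_\nu$ directly and passes to $(\tilde\tau|\tilde u|,\tilde\tau|\tilde v|)$ on $\mathcal{N}_\nu$.
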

\begin{proof}[Proof of Theorem~\ref{thm:nugrande}]
First, note that, given $(u,v)\in\mathbb{D}\setminus \{(0,0)\}$, we can always take $t=t_{(u,v)}$ such that $(tu,tv) \in \mathcal{N}_\nu$. Actually, $t=t_{(u,v)}$ is the solution to the algebraic equation
\begin{equation}\label{normH}
\|(u,v)\|_\mathbb{D}^2=\ \tau^{2_{s}^*(t)-2}\left(\int_{\mathbb{R}^N} \frac{|u|^{2^*_{s}(t)}}{|x|^{t}}dx + \int_{\mathbb{R}^N} \frac{|v|^{2^*_{s}(t)}}{|x|^{t}} dx\right)+ \nu (\alpha+\beta) \, \tau^{\alpha+\beta-2} \int_{\mathbb{R}^N} h \frac{ |u|^{\alpha} |v|^{\beta}}{|x|^t}dx.
\end{equation}
Then, fixed $(u,v)\in\mathbb{D}\setminus \{(0,0)\}$,  take $t$ such that $(tu,tv) \in \mathcal{N}_\nu$, i.e., $t$ satisfying \eqref{normH} above. Since $\alpha+\beta>2$, from \eqref{normH}, we have
\begin{equation*}
\lim_{\nu \to+\infty} t_\nu^{\alpha+\beta-2} \nu = \dfrac{\|(u,v)\|_\mathbb{D}^2}{ \displaystyle \int_{\mathbb{R}^N} h(x) \dfrac{|u|^{\alpha} |v|^{\beta}}{|x|^t}  \, dx},
\end{equation*}
so that $t=t_\nu \to 0$ as $\nu\to+\infty$ and, thus, 
\begin{equation*}
\mathcal{J}_\nu(t_\nu u ,t_\nu v)=\left(\frac{1}{2}-\frac{1}{\alpha+\beta} +o(1)  \right) t_\nu^2 \|(u,v)\|^2_\mathbb{D}.
\end{equation*}
Hence $\mathcal{J}_\nu(t_\nu u ,t_\nu v)=o(1)$ as $\nu\to+\infty$ and there exists $\overline{\nu}>0$ large enough such that, if $\nu>\overline{\nu}$, then
\begin{equation}\label{minimumlevel}
\tilde{c}_\nu=\inf_{(u,v) \in \mathcal{N}_\nu} \mathcal{J}_\nu (u,v)< \min \{\mathcal{J}_\nu(z_\mu^{(1)},0),\mathcal{J}_\nu(0,z_\mu^{(2)}) \}=\min \{ \mathscr{C}(\lambda_1,t,s), \mathscr{C}(\lambda_2,t,s)\}.
\end{equation}
If $\alpha+\beta < 2^*_{s}(t)$, from Lemma~\ref{lemmaPS2}, we get the existence of $(\tilde{u},\tilde{v}) \in \mathbb{D}$ such that $\mathcal{J}_\nu(\tilde{u},\tilde{v})=\tilde{c}_\nu$. We finish by proving that both $\tilde{u}$ and $\tilde{v}$ are indeed positive. Let $(|\tilde{u}|,|\tilde{v}|)\in\mathbb{D}$ and take $\tilde{\tau}\in\mathbb{R}$ such that $(\tilde{\tau}|\tilde{u}|,\tilde{\tau}|\tilde{v}|)\in\mathcal{N}_{\nu}$. Actually, $\tilde{\tau}$ solves \eqref{normH} with $(|\tilde{u}|,|\tilde{v}|)$ from where we get $\tilde{\tau}>0$. On the other hand, since $(\tilde{u},\tilde{v})\in\mathcal{N}_{\nu}$, that is, the pair $(\tilde{u},\tilde{v})\in\mathcal{N}_{\nu}$ satisfies \eqref{Nnueq1}, together with the inequality $\|(|\tilde{u}|,|\tilde{v}|)\|_{\mathbb{D}}\leq\|(\tilde{u},\tilde{v})\|_{\mathbb{D}}$ we also get  $\tilde{\tau}\leq 1$. Next, let us set $g(\tau)=J_\nu(\tau u,\tau v)$. Because of \eqref{alphabeta}, we have $g'''(\tau)<0$ and $g'(0)=0$ and $g(\tau)\to-\infty$ as $\tau\to+\infty$. Then, there exists $\tau_0>\tau_1>0$ such that $g'(\tau)$ attains its maximum at $\tau_0$ and $g'(\tau_1)=0$. Moreover, $g''(\tau)<0$ for $\tau>\tau_0$, in particular $g''(\tau_1)<0$. Thus, since $(\tilde{u},\tilde{v})\in\mathcal{N}_{\nu}$ is the unique maximum point of $g(t)=J_\nu(\tau \tilde{u},\tau \tilde{v})$, we find
\[\tilde{c}_{\nu}=J_{\nu}(\tilde{u},\tilde{v})|\geq\max\limits_{\tau>\tau_0}J_{\nu}(\tau\tilde{u},\tau\tilde{v})\geq J_{\nu}(\tilde{\tau}\tilde{u},\tilde{\tau}\tilde{v})\geq J_{\nu}(\tilde{\tau}|\tilde{u}|,\tilde{\tau}|\tilde{v}|)\geq \tilde{c}_{\nu},\]
so we can assume $\tilde{u}\ge 0$ and $\tilde{v}\ge 0$ in $\mathbb{R}^N$. Moreover,
$\tilde{u}\not\equiv0$ and $\tilde{v}\not\equiv$ since, otherwise, if we suppose that $\tilde{u}\equiv0$ it follows that $\tilde{v}$ is a solution to \eqref{eq:uncoupled} and further $\tilde{v}=z_\mu^{(2)}$ which contradicts \eqref{minimumlevel}. The same argument works for $\tilde{v}\equiv0$. Because of the maximum principle given in \cite[Theorem 1.2]{DelPezzo2017} we conclude $\tilde{u},\tilde{v}>0$ in $\mathbb{R}^N\backslash\{0\}$.
\end{proof}
We continue by proving the existence of positive ground state solutions based on the order relation between $\mathscr{C}(\lambda_1,t,s)$ and $\mathscr{C}(\lambda_2,t,s)$ combined with the cases $\alpha\leq2$ or $\beta\leq2$.
\begin{theorem}\label{thm:lambdaground}
Assume either $\alpha+\beta<2^*_s$ or $\alpha+\beta=2^*_s$ satisfying \eqref{H_1}. If one of the following statements is satisfied
\begin{itemize}
\item[$i)$] $\mathscr{C}(\lambda_2,t,s)\geq\mathscr{C}(\lambda_1,t,s)$ and either $\beta=2$ and $\nu$ large enough or $\beta<2$,
\item[$ii)$] $\mathscr{C}(\lambda_1,t,s)\geq\mathscr{C}(\lambda_2,t,s)$ and either $\alpha=2$ and $\nu$ large enough or $\alpha<2$,
\end{itemize}
then system \eqref{system} admits a positive ground state $(\tilde{u},\tilde{v})\in\mathbb{D}$.

In particular, if $\max\{\alpha,\beta\}<2$ or $\max\{\alpha,\beta\}\le2$ with $\nu$ sufficiently large, then system \eqref{system} admits a positive ground state $(\tilde{u},\tilde{v})\in\mathbb{D}$.
\end{theorem}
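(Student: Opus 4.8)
The plan is to turn the variational classification of the semi-trivial couples in Proposition~\ref{prop_semi} into a strict energy gap, and then to extract a minimizer using the Palais--Smale compactness of Section~\ref{sect:Palais-Smale}. The guiding principle is this: whenever the semi-trivial solution that sits at the \emph{lower} of the two levels $\min\{\mathscr{C}(\lambda_1,t,s),\mathscr{C}(\lambda_2,t,s)\}$ fails to be a local minimum of $\mathcal{J}_\nu|_{\mathcal{N}_\nu}$, the Nehari infimum $\tilde c_\nu$ must drop strictly below that level, and this is exactly the gap needed to restore compactness at the minimizing level.

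First I would fix the ordering and invoke the saddle structure. In case $i)$ one has $\min\{\mathscr{C}(\lambda_1,t,s),\mathscr{C}(\lambda_2,t,s)\}=\mathscr{C}(\lambda_1,t,s)$, and since $\beta<2$ (or $\beta=2$ with $\nu$ large), Proposition~\ref{prop_semi}$(iv)$ shows that $(z_\mu^{(1)},0)$ is a saddle point of $\mathcal{J}_\nu|_{\mathcal{N}_\nu}$. Reading off the descending path $\tau\mapsto(f(\tau)z_\mu^{(1)},f(\tau)\tau\varphi)\in\mathcal{N}_\nu$ built (for item $iv)$) in the proof of that proposition, along which $\mathcal{J}_\nu$ strictly decreases for small $\tau>0$, I would conclude
\[
\tilde c_\nu=\inf_{(u,v)\in\mathcal{N}_\nu}\mathcal{J}_\nu(u,v)<\mathscr{C}(\lambda_1,t,s)=\min\{\mathscr{C}(\lambda_1,t,s),\mathscr{C}(\lambda_2,t,s)\}.
\]
Case $ii)$ is symmetric: there $\min\{\dots\}=\mathscr{C}(\lambda_2,t,s)$, and Proposition~\ref{prop_semi}$(iii)$ makes $(0,z_\mu^{(2)})$ a saddle point at that very level, again forcing $\tilde c_\nu<\min\{\dots\}$.

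With this strict gap secured, compactness at the minimizing level is automatic. By Ekeland's variational principle applied to the complete $C^1$ manifold $\mathcal{N}_\nu$, there is a minimizing sequence $\{(u_n,v_n)\}\subset\mathcal{N}_\nu$ that is a PS sequence for $\mathcal{J}_\nu|_{\mathcal{N}_\nu}$ at level $\tilde c_\nu$, hence by Lemma~\ref{lem_PSNehari} a PS sequence for $\mathcal{J}_\nu$ in $\mathbb{D}$. In the subcritical regime $\alpha+\beta<2_s^*(t)$, Lemma~\ref{lemmaPS2} applies at $c=\tilde c_\nu<\min\{\dots\}$ and gives a strongly convergent subsequence; in the critical regime $\alpha+\beta=2_s^*(t)$ under \eqref{H_1}, Lemma~\ref{lemcritic}$(i)$ does the same for $\nu$ below the threshold $\tilde\nu$ therein. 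The limit $(\tilde u,\tilde v)$ satisfies $\mathcal{J}_\nu(\tilde u,\tilde v)=\tilde c_\nu$ and $\mathcal{J}_\nu'(\tilde u,\tilde v)=0$, so it is a ground state. To make it positive I would argue exactly as in the proof of Theorem~\ref{thm:nugrande}: passing to $(|\tilde u|,|\tilde v|)$ and projecting back onto $\mathcal{N}_\nu$ does not raise the energy, so one may take $\tilde u,\tilde v\ge0$; the strict inequality $\tilde c_\nu<\min\{\dots\}$ rules out a vanishing component (which would produce a semi-trivial solution of energy $\ge\min\{\dots\}$), and the fractional maximum principle \cite[Theorem 1.2]{DelPezzo2017} yields $\tilde u,\tilde v>0$ in $\mathbb{R}^N\setminus\{0\}$. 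The ``in particular'' statement is then immediate, since $\max\{\alpha,\beta\}<2$ forces both $\alpha<2$ and $\beta<2$, so one of $i)$, $ii)$ holds whatever the ordering of $\mathscr{C}(\lambda_1,t,s)$, $\mathscr{C}(\lambda_2,t,s)$ may be (and likewise for $\max\{\alpha,\beta\}\le2$ with $\nu$ large).

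The step demanding the most care is the compatibility of the $\nu$-ranges, and it is here that the two regimes must be treated differently. In the subcritical case Lemma~\ref{lemmaPS2} imposes no constraint on $\nu$, so the boundary exponents $\alpha=2$ or $\beta=2$ with $\nu$ large cause no trouble. In the critical case, however, the saddle-point conclusion for $\alpha=2$ or $\beta=2$ needs $\nu$ large while Lemma~\ref{lemcritic} needs $\nu$ small; these are incompatible, so in the critical regime the argument genuinely rests on the strict cases $\alpha<2$, $\beta<2$, where the saddle persists for every $\nu>0$ and one is free to take $\nu\le\tilde\nu$, the large-$\nu$ boundary cases being instead subsumed under Theorem~\ref{thm:nugrande}.
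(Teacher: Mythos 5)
Your argument is essentially identical to the paper's proof: use Proposition~\ref{prop_semi} to see that the semi-trivial couple sitting at $\min\{\mathscr{C}(\lambda_1,t,s),\mathscr{C}(\lambda_2,t,s)\}$ is a saddle point of $\mathcal{J}_\nu|_{\mathcal{N}_\nu}$, conclude $\tilde c_\nu$ drops strictly below that level, invoke Lemma~\ref{lemmaPS2} (subcritical) or Lemma~\ref{lemcritic} (critical) for compactness, and repeat the positivity argument of Theorem~\ref{thm:nugrande}. Your closing remark on the tension in the critical regime between ``$\nu$ large'' for the saddle at $\alpha=2$ or $\beta=2$ and ``$\nu\le\tilde\nu$'' in the statement of Lemma~\ref{lemcritic} is a fair observation that the paper does not address explicitly; it is harmless here since the smallness of $\nu$ in Lemma~\ref{lemcritic} is only actually used in its cases $ii)$--$iii)$ (inherited from Lemma~\ref{lemmaPS1}), not in case $i)$, which is the one needed at the level $c=\tilde c_\nu<\min\{\mathscr{C}(\lambda_1,t,s),\mathscr{C}(\lambda_2,t,s)\}$.
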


\begin{proof}[Proof of Theorem~\ref{thm:lambdaground}]
Assuming hypothesis $i)$, from Proposition~\ref{prop_semi} the semi-trivial solution $(z_\mu^{(1)},0)$ is a saddle point of $\mathcal{J}_{\nu}{\big|}_{\mathcal{N}_\nu}$ and $\tilde{c}_\nu<\mathcal{J}_\nu(z_\mu^{(1)},0)=\min \{\mathscr{C}(\lambda_1,t,s),\mathscr{C}(\lambda_2,t,s)\}$, with $\tilde{c}_\nu $ given in \eqref{cmin}. In the subcritical regime $\alpha+\beta<2^*_s(t)$, because of Lemma~\ref{lemmaPS2} we get the existence of $(\tilde{u},\tilde{v})\in\mathcal{N}_\nu$ with $\tilde{c}_\nu=\mathcal{J}_\nu(\tilde{u},\tilde{v})$. Using the same argument as in Theorem \ref{thm:nugrande} we can assume that $\tilde{u},\tilde{v}\gneq0$. By using the maximum principle \cite[Theorem 1.2]{DelPezzo2017}, we conclude that the pair $(\tilde{u},\tilde{v})$ is a positive ground state solution to system \ref{system}. In the critical regime $\alpha+\beta=2^*_s(t)$, we use Lemma~\ref{lemcritic} to deduce the existence of a positive ground state solution $(\tilde{u},\tilde{v})$ to \eqref{system}. The proof under condition $ii)$ follows similarly.
\end{proof}

As a consequence of the characterization of the semi-trivial solutions provided in Proposition \ref{prop_semi}, those semi-trivial solutions are the ground state if an strict order between the energies $\mathscr{C}(\lambda_1,t,s)$ and $\mathscr{C}(\lambda_2,t,s)$ holds, $\alpha\geq2$ or $\beta\geq0$ and $\nu>0$ small enough.

\begin{theorem}\label{thm:groundstatesalphabeta}
Assume either $\alpha+\beta<2^*_s(t)$ or $\alpha+\beta=2^*_s(t)$ satisfying \eqref{H_1}. Then, 
\begin{itemize}
\item[$i)$] If $\alpha\ge 2$ and $\mathscr{C}(\lambda_1,t,s)>\mathscr{C}(\lambda_2,t,s)$, then there exists $\tilde{\nu}>0$ such that, for any $0<\nu<\tilde{\nu}$, the couple $(0,z_\mu^{(2)})$ is the ground state of \eqref{system}.
\item[$ii)$]  If $\beta\ge 2$ and $\mathscr{C}(\lambda_2,t,s)>\mathscr{C}(\lambda_1,t,s)$, then there exists $\tilde{\nu}>0$ such that for any $0<\nu<\tilde{\nu}$  the couple $(z_\mu^{(1)},0)$ is the ground state of \eqref{system}.
\item[$iii)$] In particular, if $\alpha,\beta \ge 2$, then there exists $\tilde{\nu}>0$ such that for any $0<\nu<\tilde{\nu}$, the couple $(0, z_\mu^{(2)})$ is a ground state of \eqref{system} if $\mathscr{C}(\lambda_1,t,s)>\mathscr{C}(\lambda_2,t,s)$, whereas $(z_\mu^{(1)},0)$ is a ground state otherwise.
\end{itemize}
\end{theorem}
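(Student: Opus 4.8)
The plan is to reduce hypothesis $i)$ to the single equality $\tilde c_\nu=\mathscr{C}(\lambda_2,t,s)$, with the infimum in \eqref{cmin} attained at the semi-trivial couple $(0,z_\mu^{(2)})$. Since $z_\mu^{(2)}$ solves \eqref{eq:uncoupled} with $j=2$, the pair $(0,z_\mu^{(2)})$ is a non-negative, non-trivial critical point of $\mathcal{J}_\nu$ lying on $\mathcal{N}_\nu$, with energy $\mathscr{C}(\lambda_2,t,s)$ by \eqref{csemi2}; this already yields $\tilde c_\nu\le\mathscr{C}(\lambda_2,t,s)$, and Proposition~\ref{prop_semi}$\,i)$ (available since $\alpha\ge2$, and when $\alpha=2$ we are in the small-$\nu$ regime) confirms it is a local minimum of $\mathcal{J}_\nu\big|_{\mathcal{N}_\nu}$. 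Hence the content of the statement is the reverse bound $\tilde c_\nu\ge\mathscr{C}(\lambda_2,t,s)$ for $\nu$ small. The semi-trivial critical points cause no trouble: a critical point of the form $(u,0)$ forces $u\in\mathcal{N}_1$, so its energy is at least $\min_{\mathcal{N}_1}\mathcal{J}_1=\mathscr{C}(\lambda_1,t,s)>\mathscr{C}(\lambda_2,t,s)$, while a critical point $(0,v)$ has energy at least $\mathscr{C}(\lambda_2,t,s)$. So the only possible violation would come from a fully non-trivial critical point of low energy, which I would rule out by an energy estimate.

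For this, suppose $(\tilde u,\tilde v)$ is a critical point with $\tilde u,\tilde v\gneq0$ and $\mathcal{J}_\nu(\tilde u,\tilde v)<\mathscr{C}(\lambda_2,t,s)$. Write $a=\int_{\mathbb R^N}\tilde u^{2_s^*(t)}|x|^{-t}\,dx$, $b=\int_{\mathbb R^N}\tilde v^{2_s^*(t)}|x|^{-t}\,dx$ and $I=\int_{\mathbb R^N}h\,\tilde u^\alpha\tilde v^\beta|x|^{-t}\,dx\ge0$. By \eqref{NN2}, $\mathcal{J}_\nu(\tilde u,\tilde v)=\tfrac{2s-t}{2(N-t)}(a+b)+\nu\tfrac{\alpha+\beta-2}{2}I$, so both summands are non-negative and the hypothesis forces $a+b<[\mathcal{S}(\lambda_2,t,s)]^{\frac{N-t}{2s-t}}$; in particular $a$ and $b$ are bounded by a constant independent of $\nu$. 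Testing the first equation with $\tilde u$ and using \eqref{HS_ineq_l} gives $\mathcal{S}(\lambda_1,t,s)\,a^{2/2_s^*(t)}\le\|\tilde u\|_{\lambda_1,s}^2=a+\nu\alpha I$, while the Hölder bound \eqref{Holder1} combined with $b\le[\mathcal{S}(\lambda_2,t,s)]^{\frac{N-t}{2s-t}}$ yields $I\le C\,a^{\alpha/2_s^*(t)}$ for some $C$ independent of $\nu$. Thus $\mathcal{S}(\lambda_1,t,s)\,a^{2/2_s^*(t)}\le a+\alpha C\nu\,a^{\alpha/2_s^*(t)}$, which is precisely the hypothesis of Lemma~\ref{lem_algebr} with $A=\mathcal{S}(\lambda_1,t,s)$, $B=\alpha C$ and $\theta=\alpha\ge2$. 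Fixing $\varepsilon>0$ so small that $(1-\varepsilon)\mathscr{C}(\lambda_1,t,s)>\mathscr{C}(\lambda_2,t,s)$ --- possible exactly because the ordering is strict --- Lemma~\ref{lem_algebr} provides $\tilde\nu>0$ with $a\ge(1-\varepsilon)[\mathcal{S}(\lambda_1,t,s)]^{\frac{N-t}{2s-t}}$ for all $0<\nu<\tilde\nu$. Then $\mathcal{J}_\nu(\tilde u,\tilde v)\ge\tfrac{2s-t}{2(N-t)}a\ge(1-\varepsilon)\mathscr{C}(\lambda_1,t,s)>\mathscr{C}(\lambda_2,t,s)$, a contradiction. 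Hence no such critical point exists, $\tilde c_\nu=\mathscr{C}(\lambda_2,t,s)$, and $(0,z_\mu^{(2)})$ is the ground state.

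Part $ii)$ follows by the symmetric interchange $u\leftrightarrow v$, $\alpha\leftrightarrow\beta$, $\lambda_1\leftrightarrow\lambda_2$, now invoking $\theta=\beta\ge2$ in Lemma~\ref{lem_algebr} and the strict ordering $\mathscr{C}(\lambda_2,t,s)>\mathscr{C}(\lambda_1,t,s)$ to identify $(z_\mu^{(1)},0)$ as the ground state; and $iii)$ is obtained by dispatching each situation to $i)$ or $ii)$ according to which of $\mathscr{C}(\lambda_1,t,s),\mathscr{C}(\lambda_2,t,s)$ is larger, using $\alpha\ge2$ or $\beta\ge2$ respectively. In the critical regime $\alpha+\beta=2_s^*(t)$ the computation is unchanged with $C(h)=\|h\|_{L^\infty(\mathbb R^N)}$ in \eqref{Holder1}; note that \eqref{H_1} is needed only if one prefers to route the existence of a minimizer through the critical Palais--Smale analysis of Lemma~\ref{lemcritic}, whereas the argument above works directly with critical points and the explicit semi-trivial competitor, requiring no concentration analysis.

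The main obstacle, and the heart of the proof, is the energy estimate of the second paragraph: one must decouple the coupling integral $I$ --- absorbing the bounded factor $b^{\beta/2_s^*(t)}$ into a constant --- so as to isolate a single scalar inequality in $a$ amenable to Lemma~\ref{lem_algebr}, and one must ensure the resulting threshold $\tilde\nu$ is uniform over the admissible critical points. Here the assumption $\alpha\ge2$ is essential, since it supplies the admissible exponent $\theta=\alpha\ge2$ in Lemma~\ref{lem_algebr}, and the strict energy gap $\mathscr{C}(\lambda_1,t,s)>\mathscr{C}(\lambda_2,t,s)$ is exactly what lets the factor $(1-\varepsilon)$ be absorbed; were the two energies equal, the comparison would degenerate and the semi-trivial solution need not remain the ground state.
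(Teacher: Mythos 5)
Your proof is correct, and its analytic core coincides with the paper's: both hinge on testing the first equation with $\tilde u$, the Hardy--Sobolev inequality \eqref{HS_ineq_l}, the H\"older bound on the coupling integral (with the factor $b^{\beta/2^*_s(t)}$ absorbed into a constant via $b\le[\mathcal{S}(\lambda_2,t,s)]^{\frac{N-t}{2s-t}}$), and Lemma~\ref{lem_algebr} with $\theta=\alpha\ge2$, the strict gap $\mathscr{C}(\lambda_1,t,s)>\mathscr{C}(\lambda_2,t,s)$ being what lets the $(1-\varepsilon)$ be absorbed. Where you genuinely differ is in how the low-energy competitor is produced. The paper argues by contradiction along a sequence $\nu_n\searrow0$, invokes the Palais--Smale condition (Lemmas~\ref{lemmaPS2} and~\ref{lemcritic}, hence \eqref{H_1} in the critical regime) to extract an actual minimizer $(\tilde u_n,\tilde v_n)$, shows it is non-negative and fully non-trivial via the maximum principle, and only then runs the energy estimate; it closes by showing that a minimizer must be of the form $(0,\tilde v)$ with $\tilde v$ of one sign. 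You instead apply the estimate directly to an arbitrary non-negative critical point with both components non-trivial, having first disposed of the semi-trivial critical points by comparison with the Nehari minima of $\mathcal{J}_1$ and $\mathcal{J}_2$. Since the ground-state level \eqref{cmin} is defined as a minimum over non-negative critical points, this is legitimate and buys something real: no compactness analysis is needed, \eqref{H_1} becomes superfluous for this particular theorem, and the uniformity of the threshold $\tilde\nu$ is transparent because $A=\mathcal{S}(\lambda_1,t,s)$ and $B$ in Lemma~\ref{lem_algebr} depend only on $h$, $\lambda_1$, $\lambda_2$ and not on the critical point. The one point to keep in mind is that the paper later (in the proof of Lemma~\ref{lemmaPS1}) quotes this theorem in the form $\inf_{\mathcal{N}_\nu}\mathcal{J}_\nu=\mathscr{C}(\lambda_2,t,s)$; your version still suffices there, since the low-energy object produced in that argument is itself a critical point, but it is worth being explicit that what you prove is the identity for the minimum over critical points rather than over the whole Nehari manifold.
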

\begin{proof}[Proof of Theorem~\ref{thm:groundstatesalphabeta}]
Assuming hypothesis $i)$, from Proposition~\ref{prop_semi}, it follows that the semi-trivial solution $(0,z_\mu^{(2)})$ is a local minimum for $\nu$ small enough so that $\tilde{c}_{\nu} \leq  \mathcal{J}_{\nu} (0,z_\mu^{(2)})$. Let us prove that the equality actually holds. By contradiction, assume that there exists a sequence $\{\nu_n\} \searrow 0$ such that $\tilde{c}_{\nu_n} <  \mathcal{J}_{\nu_n} (0,z_\mu^{(2)})$. Moreover,
\begin{equation}\label{groundstates1}
\tilde{c}_{\nu_n}< \min \{ \mathscr{C}(\lambda_1,t,s), \mathscr{C}(\lambda_2,t,s) \}= \mathscr{C}(\lambda_2,t,s),
\end{equation}
for $\tilde{c}_{\nu}$ given in \eqref{cmin} with $\nu=\nu_n$. As $\alpha+\beta\leq2^*_s(t)$, the PS condition holds at level $\tilde{c}_{\nu_n}$ by Lemma~\ref{lemmaPS2} and Lemma~\ref{lemcritic}. Then, there exists $(\tilde{u}_n,\tilde{v}_n) \in \mathbb{D}$ with $\tilde{c}_{\nu_n}=\mathcal{J}_{\nu_n} (\tilde{u}_n,\tilde{v}_n)$. 
Repeating the argument used in Theorem \ref{thm:nugrande}, we can assume that $\tilde{u}_n,\tilde{v}_n\geq0$. Even more, the cases $\tilde{u}_n\equiv0$ and $\tilde{v}_n\equiv0$ in $\mathbb{R}^N$ produce a contradiction with \eqref{groundstates1}. Then, by the maximum principle \cite[Theorem 1.2]{DelPezzo2017}, we have $\tilde{u}_n,\tilde{v}_n>0$ in $\mathbb{R}^N\backslash\{0\}$. To continue note that, because of \eqref{NN2}, 
\begin{equation}\label{groundstates3}
\begin{split}
\tilde{c}_{\nu_n} &= \mathcal{J}_{\nu_n} (\tilde{u}_n,\tilde{v}_n)\\
&= \frac{2s-t}{2(N-t)} \left( \int_{\mathbb{R}^N} \dfrac{\tilde{u}_n^{2^*_{s}(t)}}{|x|^{t}}dx+ \int_{\mathbb{R}^N} \dfrac{\tilde{v}_n^{2^*_{s}(t)}}{|x|^{t}}dx\right) + \nu_n \left( \frac{\alpha+\beta-2}{2}  \right)\int_{\mathbb{R}^N} h(x)  \,  \dfrac{\tilde{u}_n^{\alpha} \,  \tilde{v}_n^\beta } {|x|^{t}} \, dx,
\end{split}
\end{equation}
so that, by \eqref{groundstates1}, we deduce that
\begin{equation}\label{groundstates4}
\frac{2s-t}{2(N-t)} \left( \int_{\mathbb{R}^N} \dfrac{\tilde{u}_n^{2^*_{s}(t)}}{|x|^{t}}dx+ \int_{\mathbb{R}^N} \dfrac{\tilde{v}_n^{2^*_{s}(t)}}{|x|^{t}}dx\right)<\mathscr{C}(\lambda_2,t,s)=\frac{2s-t}{2(N-t)} \left[\mathcal{S}(\lambda_2,t,s)\right]^{\frac{N-t}{2s-t}}.
\end{equation}
Since $(\tilde{u}_n,\tilde{v}_n)$ solves \eqref{system}, by using the first equation of \eqref{system} and \eqref{HS_ineq_l}, we find
\begin{equation*}
\mathcal{S}(\lambda_1,t,s) \left( \int_{\mathbb{R}^N} \dfrac{\tilde{u}_n^{2^*_{s}(t)}}{|x|^{t}}dx\right)^{\frac{N-2s}{N-t}} \leq \int_{\mathbb{R}^N} \dfrac{\tilde{u}_n^{2^*_{s}(t)}}{|x|^{t}}dx +  \nu_n \alpha  \int_{\mathbb{R}^N} h(x)  \dfrac{\tilde{u}_n^{\alpha} \,  \tilde{v}_n^\beta } {|x|^{t}} \, dx  .
\end{equation*}
Since, by H\"older's inequality and \eqref{groundstates4}, we have
\begin{equation*}
\begin{split}
\int_{\mathbb{R}^N} h(x)  \,  \dfrac{\tilde{u}_n^{\alpha} \,  \tilde{v}_n^\beta } {|x|^{t}}   \, dx  &\leq C(h)  \left( \int_{\mathbb{R}^N} \dfrac{\tilde{u}_n^{2^*_s(t)}}{|x|^{t}}  \, dx   \right)^{\frac{\alpha}{2^*_{s}(t)}}\left( \int_{\mathbb{R}^N} \dfrac{\tilde{v}_n^{2^*_s(t)}}{|x|^{t}}  \, dx    \right)^{\frac{\beta}{2^*_s(t)}}\\
&\leq C(h) \left(\int_{\mathbb{R}^N} \dfrac{\tilde{u}_n^{2^*_{s}(t)}}{|x|^{t}}dx\right)^{\frac{\alpha}{2}\frac{N-2s}{N-t}} [\mathcal{S}(\lambda_2,t,s)]^{\beta \frac{N-2s}{2(2-t)}}.
\end{split}
\end{equation*}
we conclude
\begin{equation*}
\mathcal{S}(\lambda_1,s) \left(\int_{\mathbb{R}^N} \dfrac{\tilde{u}_n^{2^*_{s}(t)}}{|x|^{t}}dx\right)^{\frac{N-2s}{N-t}} < \sigma_{1,n} + C (h) \nu_n\alpha\, [\mathcal{S}(\lambda_2,t,s)]^{\beta \frac{N-2s}{2(2-t)}} \left(\int_{\mathbb{R}^N} \dfrac{\tilde{u}_n^{2^*_{s}(t)}}{|x|^{t}}dx\right)^{\frac{\alpha}{2}\frac{N-2s}{N-t}}.
\end{equation*}
Applying Lemma~\ref{lem_algebr} with 
\[\sigma=\sigma_{n}=\int_{\mathbb{R}^N} \dfrac{\tilde{u}_n^{2^*_{s}(t)}}{|x|^{t}} \, dx,
\]
there exists $\tilde{\nu}=\tilde{\nu}(\varepsilon)>0$ such that, for any $ 0<\nu_n<\tilde{\nu}$, 
\begin{equation}\label{groundstates5}
\int_{\mathbb{R}^N} \dfrac{\tilde{u}_n^{2^*_{s}(t)}}{|x|^{t}} \, dx> (1-\varepsilon) [\mathcal{S}(\lambda_1,t,s)]^{\frac{N-t}{2s-t}}.
\end{equation}
Since $\mathscr{C}(\lambda_1,t,s)>\mathscr{C}(\lambda_2,t,s)$, we can choose $\varepsilon>0$ such that $(1-\varepsilon) [\mathcal{S}(\lambda_1,t,s)]^{\frac{N-t}{2s-t}} \ge  [\mathcal{S}(\lambda_2,t,s)]^{\frac{N-t}{2s-t}}$ and, hence, from \eqref{groundstates5} we get 
\[
\dfrac{2s-t}{2(N-t)}\int_{\mathbb{R}^N} \dfrac{\tilde{u}_n^{2^*_{s}(t)}}{|x|^{t}} \, dx>\mathscr{C}(\lambda_2,t,s),
\]
in contradiction with \eqref{groundstates4}. Then, 
\begin{equation}\label{groundstates7}
\tilde{c}_\nu =   \frac{2s-t}{2(N-t)} [\mathcal{S}(\lambda_2,t,s)]^{\frac{N-t}{2s-t}},
\end{equation}
for $\nu$ small enough.
Let $(\tilde{u},\tilde{v})$ be a minimizer of $\mathcal{J}_\nu$. If $\tilde{v}\equiv 0$, then a contradiction with \eqref{groundstates7} is found so $\tilde{u}\equiv 0$ and $\tilde{v}$ is such that
\begin{equation*}
(-\Delta)^s \tilde{v} - \lambda_2 \frac{\tilde{v}}{|x|^{2s}}=\dfrac{|\tilde{v}|^{2^*_{s}(t)-2}\tilde{v}}{|x|^{t}} \qquad \mbox{ in } \mathbb{R}^N.
\end{equation*}
Assume that $\tilde{v}$ changes sign so $\tilde{v}^{\pm} \not \equiv 0$ in $\mathbb{R}^N$. Since $(0,\tilde{v}) \in \mathcal{N}_\nu$, then $(0,\tilde{v}^\pm) \in \mathcal{N}_\nu$ and, by \eqref{groundstates3},
\begin{equation*}
\tilde{c}_{\nu}= \mathcal{J}_\nu (0,\tilde{v}) = \frac{2s-t}{2(N-t)} \int_{\mathbb{R}^N} \frac{|\tilde{v}|^{2^*_{s}(t)}}{|x|^{t}}   = \frac{2s-t}{2(N-t)} \int_{\mathbb{R}^N} \left( \frac{|\tilde{v}^+|^{2^*_{s}(t)}}{|x|^{t}}  + \frac{|\tilde{v}^-|^{2^*_s(t)}}{|x|^{t}}\right)  > \mathcal{J}_\nu (0,\tilde{v}^+) \ge  \tilde{c}_{\nu},
\end{equation*}
which is clearly a contradiction. Then, if $\mathscr{C}(\lambda_1,t,s)>\mathscr{C}(\lambda_2,t,s)$, the semi-trivial solution $(0,\pm z_{\mu}^{(2)})$ is the minimizer of $\mathcal{J}_{\nu}{\big|}_{\mathcal{N}_\nu}$ or, equivalently, the semi-trivial solution $(0,z_{\mu}^{(2)})$ is the ground state solution to \eqref{system}. The proof under hypotheses $ii)$ and $iii)$ follows similarly.
\end{proof}

Finally, we found bound states by using a min-max procedure. More precisely, we show that the energy functional admits a Mountain--Pass geometry for certain $\lambda_1,\lambda_2$ verifying a separability condition, that allows us to separate the \textit{semi-trivial} energy levels in a suitable way. The geometry of $\mathcal{J}_\nu$ jointly with the PS condition give us the existence of solution.
\begin{theorem}\label{MPgeom}
Assume either $\alpha+\beta<2^*_s$ or $\alpha+\beta=2^*_s$ satisfying \eqref{H_1}. If
\begin{itemize}
\item[$i)$] Either $\alpha \ge 2$ and
\begin{equation}\label{lamdasalphabeta}
2\mathscr{C}(\lambda_2,s)> \mathscr{C}(\lambda_1,s)>  \mathscr{C}(\lambda_2,s),
\end{equation}
\item[$ii)$] or $\beta \ge 2 $ and
\begin{equation*}
2\mathscr{C}(\lambda_1,s)> \mathscr{C}(\lambda_2,s)>  \mathscr{C}(\lambda_1,s),
\end{equation*}
\end{itemize}
then there exists $\tilde{\nu}>0$ such that for $0<\nu\le \tilde{\nu}$, the system \eqref{system} admits a bound state solution given as a Mountain--Pass-type critical point.
\end{theorem}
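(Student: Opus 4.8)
The plan is to produce the bound state as a mountain--pass critical point of the truncated functional $\mathcal{J}_\nu^+$ constrained to $\mathcal{N}_\nu^+$, using the two semi--trivial solutions as endpoints of the min--max class. By the symmetry interchanging $(u,\alpha,\lambda_1)$ and $(v,\beta,\lambda_2)$ it suffices to treat case $i)$, so assume $\alpha\ge 2$ and $2\mathscr{C}(\lambda_2,t,s)>\mathscr{C}(\lambda_1,t,s)>\mathscr{C}(\lambda_2,t,s)$. I would fix $\nu>0$ small (shrinking it a finite number of times) and set
\[
\Gamma=\big\{\gamma\in C([0,1],\mathcal{N}_\nu^+):\gamma(0)=(z_\mu^{(1)},0),\ \gamma(1)=(0,z_\mu^{(2)})\big\},\qquad c_\nu=\inf_{\gamma\in\Gamma}\max_{\theta\in[0,1]}\mathcal{J}_\nu^+(\gamma(\theta)),
\]
which is nonempty since $\mathcal{N}_\nu^+$ is path--connected. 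The decisive structural point is that, by forcing the competing paths to lie on $\mathcal{N}_\nu^+$ (where $\|(u,v)\|_{\mathbb{D}}>r_\nu$ by \eqref{cp2}), no admissible path can be collapsed through the origin; the two semi--trivial configurations are therefore genuinely separated and $c_\nu$ detects a nontrivial coupled level.

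First I would estimate $c_\nu$ from above with the path obtained by projecting $\theta\mapsto((1-\theta)z_\mu^{(1)},\theta\,z_\mu^{(2)})$ onto $\mathcal{N}_\nu^+$ via the unique dilation of \eqref{normH}. Because $h\gneq0$ enters $\mathcal{J}_\nu^+$ with a negative sign, $\mathcal{J}_\nu^+(\tau\,\cdot)\le\mathcal{J}_0^+(\tau\,\cdot)$ along every ray, so the value of the projected path at $\theta$ is at most the decoupled ray--maximum
\[
M_0(\theta)=\frac{2s-t}{2(N-t)}\,\frac{A(\theta)^{\frac{2_s^*(t)}{2_s^*(t)-2}}}{C(\theta)^{\frac{2}{2_s^*(t)-2}}},\quad A(\theta)=(1-\theta)^2T_1+\theta^2T_2,\ \ C(\theta)=(1-\theta)^{2_s^*(t)}T_1+\theta^{2_s^*(t)}T_2,
\]
where $T_j:=[\mathcal{S}(\lambda_j,t,s)]^{\frac{N-t}{2s-t}}$ as in \eqref{relation}. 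A direct computation gives $\max_{\theta}M_0(\theta)=\frac{2s-t}{2(N-t)}(T_1+T_2)=\mathscr{C}(\lambda_1,t,s)+\mathscr{C}(\lambda_2,t,s)$, attained at an interior $\theta_0$ where both components are nonzero; there the coupling integral $\int_{\mathbb{R}^N}h\,|u|^\alpha|v|^\beta|x|^{-t}\,dx$ is strictly positive at the associated Nehari point, which forces a strict drop. Together with $M_0(\theta)<\mathscr{C}(\lambda_1,t,s)+\mathscr{C}(\lambda_2,t,s)$ for $\theta\neq\theta_0$ and compactness, this yields $c_\nu<\mathscr{C}(\lambda_1,t,s)+\mathscr{C}(\lambda_2,t,s)$; the same path applies when $\alpha+\beta=2_s^*(t)$.

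Next I would bound $c_\nu$ from below, and this is where the \emph{main obstacle} lies, since the estimate must hold for \emph{every} admissible path. Writing $P=\int_{\mathbb{R}^N}(u^+)^{2_s^*(t)}|x|^{-t}dx$ and $Q=\int_{\mathbb{R}^N}(v^+)^{2_s^*(t)}|x|^{-t}dx$, the endpoints of any $\gamma\in\Gamma$ are sent to $(T_1,0)$ and $(0,T_2)$; combining the Nehari identity \eqref{N1}, the Hardy--Sobolev inequality \eqref{HS_ineq_l} for each component, and \eqref{Holder1} to control the coupling, every point of bounded energy on $\mathcal{N}_\nu^+$ satisfies $P+Q\ge \mathcal{S}(\lambda_1,t,s)P^{2/2_s^*(t)}+\mathcal{S}(\lambda_2,t,s)Q^{2/2_s^*(t)}-C\nu$. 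The $\nu=0$ region has an inward \emph{notch} exactly at $(T_1,T_2)$, where $P+Q=T_1+T_2$; a continuity argument shows that one cannot transfer all the critical mass from the first to the second component while keeping $P+Q$ below $T_1+T_2$, whence $\max_\theta(P+Q)\ge T_1+T_2-C\nu$ along every path. By the analogue of \eqref{NN2} for $\mathcal{J}_\nu^+$ one has $\mathcal{J}_\nu^+\ge\frac{2s-t}{2(N-t)}(P+Q)$, so $c_\nu\ge\mathscr{C}(\lambda_1,t,s)+\mathscr{C}(\lambda_2,t,s)-C\nu$. Since \eqref{lamdasalphabeta} gives $\mathscr{C}(\lambda_1,t,s)+\mathscr{C}(\lambda_2,t,s)>2\mathscr{C}(\lambda_2,t,s)$ and $<3\mathscr{C}(\lambda_2,t,s)$, shrinking $\nu$ yields
\[
2\mathscr{C}(\lambda_2,t,s)<c_\nu<\mathscr{C}(\lambda_1,t,s)+\mathscr{C}(\lambda_2,t,s)<3\mathscr{C}(\lambda_2,t,s),
\]
so $c_\nu>\max\{\mathscr{C}(\lambda_1,t,s),\mathscr{C}(\lambda_2,t,s)\}$ and $c_\nu\neq\ell\mathscr{C}(\lambda_2,t,s)$ for every $\ell\in\mathbb{N}\setminus\{0\}$. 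Making the notch obstruction rigorous, uniformly in the $O(\nu)$ coupling corrections on $\mathcal{N}_\nu^+$, is the delicate point.

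These bounds place $c_\nu$ in the window \eqref{PS1}--\eqref{PS2} (respectively its critical counterpart covered by Lemma~\ref{lemcritic} under \eqref{H_1}), so $\mathcal{J}_\nu^+$ satisfies the PS condition at $c_\nu$ by Lemma~\ref{lemmaPS1}, the passage from $\mathcal{N}_\nu^+$ to $\mathbb{D}$ being as in Lemma~\ref{lem_PSNehari}. Since moreover $c_\nu>\max\{\mathcal{J}_\nu^+(z_\mu^{(1)},0),\mathcal{J}_\nu^+(0,z_\mu^{(2)})\}$, the mountain--pass theorem on the complete $C^1$ manifold $\mathcal{N}_\nu^+$ (a natural constraint, cf. \eqref{N1}) furnishes a critical point $(\tilde u,\tilde v)$ of $\mathcal{J}_\nu^+$ with $\mathcal{J}_\nu^+(\tilde u,\tilde v)=c_\nu$. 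Testing with the negative parts as in the proof of Theorem~\ref{thm:nugrande} gives $\tilde u,\tilde v\ge0$, so $(\tilde u,\tilde v)$ solves \eqref{system}; since $c_\nu$ exceeds both semi--trivial levels, neither component vanishes, and \cite[Theorem~1.2]{DelPezzo2017} yields $\tilde u,\tilde v>0$ in $\mathbb{R}^N\setminus\{0\}$. As $c_\nu$ lies strictly above the ground--state level, $(\tilde u,\tilde v)$ is a genuine coupled bound state. Case $ii)$ follows verbatim after swapping the two components and invoking the $\beta\ge2$ variant of Lemma~\ref{lemmaPS1}.
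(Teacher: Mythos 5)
Your overall architecture coincides with the paper's: a min--max over paths in $\mathcal{N}_\nu^+$ joining $(z_\mu^{(1)},0)$ to $(0,z_\mu^{(2)})$, an upper bound via the explicit projected segment, a lower bound valid for every admissible path, then Lemmas~\ref{lemmaPS1} and \ref{lemcritic} plus the truncation and maximum-principle arguments to produce a positive bound state. Your upper bound is essentially Lemma~\ref{p2}: the decoupled ray-maximum along the projected segment equals $\mathscr{C}(\lambda_1,t,s)+\mathscr{C}(\lambda_2,t,s)$, attained only at the symmetric parameter (your Jensen-type inequality for $M_0$ is the paper's claim that $g$ is maximized at $\tau=\tfrac12$), and the coupling term forces the strict inequality.

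Where you genuinely diverge is the lower bound, and that is precisely the step you flag as ``delicate'' and leave open. The paper's Lemma~\ref{p1} replaces your notch/obstruction analysis by a much shorter device: along any path the difference of the two Hardy--Sobolev masses $\sigma_1(\tau)-\sigma_2(\tau)$ changes sign between the endpoints, so the intermediate value theorem yields $\tilde\tau$ with $\sigma_1(\tilde\tau)=\sigma_2(\tilde\tau)=\tilde\sigma>0$; the Nehari identity \eqref{MPgeomp1} combined with \eqref{HS_ineq_l} and H\"older gives $\bigl(\mathcal{S}(\lambda_1,t,s)+\mathcal{S}(\lambda_2,t,s)\bigr)\tilde\sigma^{2/2_s^*(t)}\le 2\tilde\sigma+C\nu\,\tilde\sigma^{(\alpha+\beta)/2_s^*(t)}$, and Lemma~\ref{lem_algebr} then forces $\tilde\sigma>[\mathcal{S}(\lambda_2,t,s)]^{\frac{N-t}{2s-t}}$, whence $\max_\tau\mathcal{J}_\nu^+>2\mathscr{C}(\lambda_2,t,s)>\mathscr{C}(\lambda_1,t,s)$ by \eqref{lamdasalphabeta}. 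This weaker bound already places $c_{MP}$ in the window \eqref{PS1}--\eqref{PS2} once combined with the upper bound $c_{MP}<\mathscr{C}(\lambda_1,t,s)+\mathscr{C}(\lambda_2,t,s)<3\mathscr{C}(\lambda_2,t,s)$. Your stronger claim $c_\nu\ge\mathscr{C}(\lambda_1,t,s)+\mathscr{C}(\lambda_2,t,s)-C\nu$ is in fact salvageable: at $\nu=0$ the constraint $\mathcal{S}(\lambda_1,t,s)P^{2/2_s^*(t)}+\mathcal{S}(\lambda_2,t,s)Q^{2/2_s^*(t)}\le P+Q$ excludes the entire open rectangle $\bigl(0,[\mathcal{S}(\lambda_1,t,s)]^{\frac{N-t}{2s-t}}\bigr)\times\bigl(0,[\mathcal{S}(\lambda_2,t,s)]^{\frac{N-t}{2s-t}}\bigr)$ (the two one-variable functions $\mathcal{S}_jX^{2/2_s^*(t)}-X$ are simultaneously nonnegative there), so every point of the path satisfies $P\ge[\mathcal{S}(\lambda_1,t,s)]^{\frac{N-t}{2s-t}}$ or $Q\ge[\mathcal{S}(\lambda_2,t,s)]^{\frac{N-t}{2s-t}}$ and connectedness produces a point where both hold. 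But the uniform $O(\nu)$ version requires excluding a neighborhood of the origin via \eqref{cp2} and quantifying the erosion of the corners, none of which you carry out; since the IVT bound suffices and is two lines, you should substitute it. As written, this lower-bound step is the one genuine gap in your proposal.
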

The proof of Theorem \ref{MPgeom} will be a direct consequence of the following Lemmas.

\begin{lemma}\label{p1}
Under the hypotheses of Theorem \ref{MPgeom}, there exists $\tilde{\nu}>0$ such that for $0<\nu\le \tilde{\nu}$, the functional $\mathcal{J}_{\nu}{\big|}_{\mathcal{N}_\nu}$ admits a Mountain--Pass geometry.
\end{lemma}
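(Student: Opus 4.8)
The plan is to treat case $i)$ in detail, case $ii)$ being symmetric under interchanging the two components together with the roles of $\alpha,\beta$ and $\lambda_1,\lambda_2$. I fix the scale $\mu>0$ once and for all and abbreviate $\mathscr{C}_j:=\mathscr{C}(\lambda_j,t,s)$ and $\mathcal{S}_j:=\mathcal{S}(\lambda_j,t,s)$, so the standing hypothesis reads $2\mathscr{C}_2>\mathscr{C}_1>\mathscr{C}_2$.

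First I would produce the \emph{well} of the geometry. Since $\alpha\ge 2$, Proposition~\ref{prop_semi}$\,i)$ (with $\nu$ small when $\alpha=2$) gives that $(0,z^{(2)}_\mu)$ is a strict local minimum of $\mathcal{J}_\nu|_{\mathcal{N}_\nu}$ at the level $\mathscr{C}_2$. Hence there exist $R_0>0$ and a gap $\delta_0>0$ with $\mathcal{J}_\nu(u,v)\ge\mathscr{C}_2+\delta_0$ whenever $(u,v)\in\mathcal{N}_\nu$ and $\|(u,v)-(0,z^{(2)}_\mu)\|_{\mathbb{D}}=R_0$; this is the separating ring. I take the base point to be $(0,z^{(2)}_\mu)$ and the second end to be the other semi-trivial solution $(z^{(1)}_\mu,0)$, at energy $\mathscr{C}_1$. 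The latter lies outside $B_{R_0}$ because the two components are $\mathbb{D}$-orthogonal, so $\|(z^{(1)}_\mu,0)-(0,z^{(2)}_\mu)\|_{\mathbb{D}}^2=\mathcal{S}_1^{\frac{N-t}{2s-t}}+\mathcal{S}_2^{\frac{N-t}{2s-t}}$ is a fixed positive number, larger than $R_0^2$ once $R_0$ is chosen small.

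Next I would build a competitor path and bound the pass from above. I connect the two ends by projecting the elementary ray onto $\mathcal{N}_\nu$ via \eqref{normH}: for $\theta\in[0,1]$ set $w_\theta=\big((1-\theta)z^{(1)}_\mu,\theta z^{(2)}_\mu\big)$ and $\gamma_0(\theta)=t_{w_\theta}w_\theta\in\mathcal{N}_\nu$ with $t_{w_\theta}>0$ the unique Nehari scaling. Writing $\mathcal{J}_\nu(tw)=\tfrac12 t^2\|w\|_{\mathbb{D}}^2-\tfrac1{2^*_s(t)}t^{2^*_s(t)}(\Phi+\Psi)-\nu t^{\alpha+\beta}K$, where $\Phi,\Psi$ are the two critical integrals and $K=\int_{\mathbb{R}^N}h|u|^\alpha|v|^\beta/|x|^t\ge 0$, the coupling enters with a \emph{negative} sign, so $\max_{t>0}\mathcal{J}_\nu(tw)\le\max_{t>0}\big[\tfrac12 t^2\|w\|_{\mathbb{D}}^2-\tfrac1{2^*_s(t)}t^{2^*_s(t)}(\Phi+\Psi)\big]$, the uncoupled Nehari value. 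Using \eqref{relation}, a direct one–variable computation shows that the uncoupled value along $w_\theta$ equals $\mathscr{C}_1+\mathscr{C}_2$ at the balanced direction $\theta=\tfrac12$ and is strictly smaller elsewhere, while at that direction $K$ is bounded away from $0$. Hence the deficit produced by the negative coupling term is of order $\nu$, and for $\nu$ small $\max_{[0,1]}\mathcal{J}_\nu\circ\gamma_0<\mathscr{C}_1+\mathscr{C}_2$. Consequently the mountain--pass value $c_{MP}=\inf_{\gamma}\max_{[0,1]}\mathcal{J}_\nu\circ\gamma$, over paths in $\mathcal{N}_\nu$ joining $(z^{(1)}_\mu,0)$ and $(0,z^{(2)}_\mu)$, satisfies $c_{MP}<\mathscr{C}_1+\mathscr{C}_2$, the very window in which Lemma~\ref{lemmaPS1} restores compactness.

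The main obstacle — the step I expect to cost the most — is the lower bound $c_{MP}>\max\{\mathscr{C}_1,\mathscr{C}_2\}=\mathscr{C}_1$, i.e. a genuine mountain strictly above the higher end $(z^{(1)}_\mu,0)$; the ring alone only yields $c_{MP}\ge\mathscr{C}_2+\delta_0$, which need not exceed $\mathscr{C}_1$. To close this I would show that every admissible path must cross a configuration in which \emph{both} components carry a definite amount of mass: combining the Nehari identity \eqref{Nnueq} with the Hardy--Sobolev inequality \eqref{HS_ineq_l} applied to each component, I would bound $\mathcal{J}_\nu$ from below along the path by a two–variable functional of $(\Phi,\Psi)$, and then run a continuity argument on $\theta\mapsto(\Phi(\theta),\Psi(\theta))$, which travels from $(\mathcal{S}_1^{\frac{N-t}{2s-t}},0)$ to $(0,\mathcal{S}_2^{\frac{N-t}{2s-t}})$, to locate a forced crossing level. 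This is exactly where the separation hypothesis \eqref{lamdasalphabeta}, namely $\mathscr{C}_1<2\mathscr{C}_2$, is indispensable: it forces that crossing level strictly above $\mathscr{C}_1$ while keeping $c_{MP}$ below $\mathscr{C}_1+\mathscr{C}_2$. Once $\mathscr{C}_1<c_{MP}<\mathscr{C}_1+\mathscr{C}_2$ holds for all $0<\nu\le\tilde\nu$, the three ingredients (well, separating ring, and a far end below the pass) are in place and $\mathcal{J}_\nu|_{\mathcal{N}_\nu}$ possesses a mountain--pass geometry.
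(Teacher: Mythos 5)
Your proposal is correct and, for the substance of this lemma, follows essentially the same route as the paper: the decisive lower bound $c_{MP}>\mathscr{C}(\lambda_1,t,s)$ is obtained by tracking the pair of critical integrals $\tau\mapsto(\sigma_1(\tau),\sigma_2(\tau))$ along an arbitrary admissible path, forcing by continuity a configuration where both components carry comparable mass, and combining the Nehari identity with \eqref{HS_ineq_l}; the paper executes exactly the outline you give by crossing at the diagonal $\sigma_1(\tilde\tau)=\sigma_2(\tilde\tau)=\tilde\sigma$, invoking Lemma~\ref{lem_algebr} for small $\nu$ to get $\tilde\sigma>\bigl[\tfrac{\mathcal{S}(\lambda_1,t,s)+\mathcal{S}(\lambda_2,t,s)}{2}\bigr]^{\frac{N-t}{2s-t}}$, and concluding $\mathcal{J}_\nu^+\ge 2\mathscr{C}(\lambda_2,t,s)>\mathscr{C}(\lambda_1,t,s)$ from \eqref{lamdasalphabeta}. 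The only organizational differences: the ``well'' you construct from Proposition~\ref{prop_semi} is not needed in the paper's formulation (the MP geometry is certified directly by $c_{MP}>\max$ of the two endpoint energies), and your upper bound $c_{MP}<\mathscr{C}(\lambda_1,t,s)+\mathscr{C}(\lambda_2,t,s)$ via the projected segment joining the two semi-trivial states is precisely the content the paper defers to Lemma~\ref{p2}, where it is needed to place $c_{MP}$ in the compactness window of Lemmas~\ref{lemmaPS1} and \ref{lemcritic}.
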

\begin{proof}
Let us assume hypothesis $i)$ and define and the Mountain--Pass level
\begin{equation*}
c_{MP} = \inf_{\psi\in\Psi_\nu} \max_{t\in [0,1]} \mathcal{J}^+_{\nu} (\psi(t)).
\end{equation*}
where $\Psi_\nu$ denotes the set of paths connecting $(z_{\mu}^{(1)},0)$ to $(0,z_{\mu}^{(2)})$ continuously, namely
\begin{equation*}
\Psi_\nu = \left\{ \psi(t)=(\psi_1(\tau),\psi_2(\tau))\in C^0([0,1],\mathcal{N}^+_\nu): \, \psi(0)=(z_1^{(1)},0) \mbox{ and } \, \psi(1)=(0,z_1^{(2)})\right\},
\end{equation*}
Let us take $\psi=(\psi_1,\psi_2) \in \Psi_\nu$, so that, because of \eqref{N1},
\begin{equation}\label{MPgeomp1}
\begin{split}
\|(\psi_1(\tau),\psi_2(\tau))\|^2_{\mathbb{D}}=&\ \int_{\mathbb{R}^N} \frac{(\psi_1^+(\tau))^{2^*_{s}(t)}}{|x|^{t}}dx +\int_{\mathbb{R}^N} \frac{(\psi_2^+(\tau))^{2^*_{s}(t)}}{|x|^{t}} dx\\
&  + \nu(\alpha+\beta) \int_{\mathbb{R}^N} h(x) \frac{(\psi_1^+(\tau))^\alpha (\psi_2^+(\tau))^\beta}{|x|^t} \, dx  ,
\end{split}
\end{equation}
and, because of \eqref{N2},
\begin{equation}\label{MPgeomp2}
\begin{split}
\mathcal{J}^+_{\nu} (\psi(\tau)) =&\ \frac{2s-t}{2(N-t)}\int_{\mathbb{R}^N} \frac{(\psi_1^+(\tau))^{2^*_{s}(t)}}{|x|^{t}} +\int_{\mathbb{R}^N} \frac{(\psi_2^+(\tau))^{2^*_{s}(t)}}{|x|^{t}} dx \\
  &+ \nu\left(\frac{\alpha+\beta-2}{2}\right) \int_{\mathbb{R}^N}h(x)\frac{(\psi_1^+(\tau))^\alpha (\psi_2^+(\tau))^\beta}{|x|^t} \, dx.
\end{split}
\end{equation}
Let us define $\sigma(\tau)\vcentcolon=\left(\sigma_1(\tau),\sigma_2(\tau)\right)$, with 
\[\sigma_j(t)\vcentcolon=\int_{\mathbb{R}^N} \dfrac{(\psi_j^+(\tau))^{2^*_{s}(t)}}{|x|^{t}} \, dx.\]
Observe that, from the definition of $\psi$, we have
\begin{equation*}
\sigma(0)=\left(\int_{\mathbb{R}^N} \frac{(z_1^{(1)})^{2^*_{s}(t)}}{|x|^{t}} \, dx,0\right) \quad \mbox{ and } \quad \sigma(1)=\left(0,\int_{\mathbb{R}^N} \frac{(z_1^{(2)})^{2^*_{s}(t)}}{|x|^{t}} \, dx \right).
\end{equation*}
Using \eqref{HS_ineq_l} and \eqref{MPgeomp1}, 
\begin{equation}\label{MPgeomp3}
\begin{split}
\mathcal{S}(\lambda_1,t,s)(\sigma_1(\tau))^{\frac{N-2s}{N-t}}+\mathcal{S}(\lambda_2,t,s)(\sigma_2(\tau))^{\frac{N-2s}{N-t}}
\leq&\ \|(\psi_1(\tau)\|^2_{\lambda_1},\psi_2(\tau))\|^2_{\mathbb{D}}\\
=&\ \sigma_1(\tau)+\sigma_2(\tau)\\
&+\nu (\alpha+\beta) \int_{\mathbb{R}^N} h \frac{(\psi_1^+(\tau))^\alpha (\psi_2^+(\tau))^\beta}{|x|^t} \, dx,
\end{split}
\end{equation}
while, by H\"older's inequality, 
\begin{equation}\label{MPgeomp4}
\int_{\mathbb{R}^N} h \frac{(\psi_1^+(\tau))^\alpha (\psi_2^+(\tau))^\beta}{|x|^t} \, dx \leq \nu \|h\|_{L^{\infty}}  (\sigma_1(\tau))^{\frac{\alpha}{2}\frac{N-2s}{N-t}} (\sigma_2(\tau))^{\frac{\beta}{2}\frac{N-2s}{N-t}}. 
\end{equation}
Since $\sigma(\tau)$ is continuous function, we can find $\tilde{\tau}\in(0,1)$ for which $\sigma_1(\tilde{\tau})=\sigma_2(\tilde{\tau})=\tilde{\sigma}\neq0$ so that, because of \eqref{MPgeomp3} with $\tau=\tilde{\tau}$ and \eqref{MPgeomp4}, we get 
\begin{equation*}
\left( \mathcal{S}(\lambda_1,t,s) +\mathcal{S}(\lambda_2,t,s)\right) \tilde \sigma^{\frac{2}{2^*_{s}(t)}} \leq 2 \tilde{\sigma} + \nu (\alpha+\beta) \tilde{\sigma}^{\frac{\alpha+\beta}{2^*_{s}(t)}}.
\end{equation*}
Then, due to Lemma~\ref{lem_algebr}, there exists $\tilde{\nu}>0$ small enough such that, for every $0<\nu\le\tilde{\nu}$,\begin{equation}\label{MPgeomp5}
\tilde{\sigma}> \left[\frac{ \mathcal{S}(\lambda_1,t,s)+ \mathcal{S}(\lambda_2,t,s)}{2}\right]^{\frac{N-t}{2s-t}}>  \left[ \mathcal{S}(\lambda_2,t,s) \right]^{\frac{N-t}{2s-t}},
\end{equation}
since, under hypothesis $i)$, $\mathscr{C}(\lambda_1,s)>  \mathscr{C}(\lambda_2,s)$. Therefore, from \eqref{MPgeomp2} and \eqref{MPgeomp5}, we deduce
\begin{equation*}
\max_{\tau\in[0,1]} \mathcal{J}^+_\nu(\psi(\tau)) > 2 \frac{2s-t}{2(N-t)} \left[ \mathcal{S}(\lambda_2,t,s) \right]^{\frac{N-t}{2s-t}} = 2 \mathscr{C}(\lambda_2,t,s)>\mathscr{C}(\lambda_1,t,s).
\end{equation*}
As a consequence, $c_{MP}> \mathscr{C}(\lambda_1,t,s)=\max\{\mathcal{J}^+_\nu(z_1^{(1)},0),\mathcal{J}^+_\nu(z_1^{(2)},0)\}$, and we conclude that $\mathcal{J}_{\nu}{\big|}_{\mathcal{N}_\nu}$ admits a Mountain--Pass geometry. The proof under hypothesis $ii)$ follows analogously.
\end{proof}
Next we address the compactness with the aid of Lemmas~\ref{lemmaPS1} and \ref{lemcritic}. To that end, we prove that the mountain pass level lies in the correct energy range.
\begin{lemma}\label{p2}
Under the hypotheses of Theorem \ref{MPgeom}, the PS condition holds for the Mountain--Pass level $c_{MP}$.
\end{lemma}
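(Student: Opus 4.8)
The plan is to show that $c_{MP}$ lands strictly inside the admissible window \eqref{PS1} and avoids the forbidden multiples \eqref{PS2}, after which the compactness already established in Lemma~\ref{lemmaPS1} (subcritical range) or Lemma~\ref{lemcritic} (critical range, under \eqref{H_1}) applies verbatim to every PS sequence at level $c_{MP}$. I only discuss hypothesis $i)$; the case $ii)$ is symmetric after interchanging the two components and invoking \eqref{PS2a}--\eqref{PS2aa}.

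For the lower bound I would extract slightly more than the final statement of Lemma~\ref{p1}. Its estimate \eqref{MPgeomp5} is uniform in the path: every $\psi\in\Psi_\nu$ has a crossover parameter $\tilde\tau$ with $\sigma_1(\tilde\tau)=\sigma_2(\tilde\tau)=\tilde\sigma$ and $\tilde\sigma>\bigl[\tfrac{\mathcal S(\lambda_1,t,s)+\mathcal S(\lambda_2,t,s)}{2}\bigr]^{\frac{N-t}{2s-t}}$. Evaluating \eqref{MPgeomp2} at $\tilde\tau$ and discarding the nonnegative coupling term gives $\max_\tau \mathcal J^+_\nu(\psi(\tau))\ge \frac{2s-t}{N-t}\tilde\sigma$, a bound independent of $\psi$, so taking the infimum over $\Psi_\nu$ yields $c_{MP}\ge \frac{2s-t}{N-t}\bigl[\tfrac{\mathcal S(\lambda_1,t,s)+\mathcal S(\lambda_2,t,s)}{2}\bigr]^{\frac{N-t}{2s-t}}>2\mathscr C(\lambda_2,t,s)$, the strict inequality coming from $\mathcal S(\lambda_1,t,s)>\mathcal S(\lambda_2,t,s)$. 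Since \eqref{lamdasalphabeta} gives $2\mathscr C(\lambda_2,t,s)>\mathscr C(\lambda_1,t,s)$, this already secures the left inequality of \eqref{PS1}.

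The crux is the strict upper bound $c_{MP}<\mathscr C(\lambda_1,t,s)+\mathscr C(\lambda_2,t,s)$, which I would obtain from an explicit competitor path. Fix $\mu=1$ and set, for $\theta\in[0,\tfrac\pi2]$, $\Phi(\theta)=(\cos\theta\, z_1^{(1)},\sin\theta\, z_1^{(2)})$; multiplying by the unique positive factor $\kappa(\theta)$ determined by \eqref{normH} produces a continuous path $\psi_\ast(\theta)=\kappa(\theta)\Phi(\theta)$ in $\mathcal N^+_\nu$ with $\psi_\ast(0)=(z_1^{(1)},0)$ and $\psi_\ast(\tfrac\pi2)=(0,z_1^{(2)})$, hence $\psi_\ast\in\Psi_\nu$. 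By the fibering property of the Nehari manifold the value of $\mathcal J^+_\nu$ at each radial projection is the maximum along that ray, so
\[
\max_{\theta}\mathcal J^+_\nu(\psi_\ast(\theta))=\max_{a,b\ge0}F(a,b),\qquad F(a,b):=\mathcal J^+_\nu\bigl(a z_1^{(1)},b z_1^{(2)}\bigr).
\]
Writing $X_j:=[\mathcal S(\lambda_j,t,s)]^{\frac{N-t}{2s-t}}$ and $C_{12}:=\int_{\mathbb R^N}h\,\tfrac{(z_1^{(1)})^\alpha (z_1^{(2)})^\beta}{|x|^t}\,dx>0$ (finite since $h\in L^\infty$ when $\alpha+\beta=2^*_s(t)$ and by Hölder together with \eqref{H} otherwise), relation \eqref{relation} gives
\[
F(a,b)=\Bigl(\tfrac{a^2}{2}-\tfrac{a^{2^*_s(t)}}{2^*_s(t)}\Bigr)X_1+\Bigl(\tfrac{b^2}{2}-\tfrac{b^{2^*_s(t)}}{2^*_s(t)}\Bigr)X_2-\nu\,a^\alpha b^\beta C_{12}.
\]
The decoupled functional $F_0:=F|_{\nu=0}$ is separable and attains its maximum $\frac{2s-t}{2(N-t)}(X_1+X_2)=\mathscr C(\lambda_1,t,s)+\mathscr C(\lambda_2,t,s)$ only at the interior point $(1,1)$. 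Since $F=F_0-\nu a^\alpha b^\beta C_{12}\le F_0$ with equality precisely on the axes, at any maximiser $(a_\ast,b_\ast)$ of $F$ (which exists by coercivity, as $2^*_s(t)>2$) either $a_\ast b_\ast>0$, whence $F(a_\ast,b_\ast)<F_0(a_\ast,b_\ast)\le\max F_0$, or $a_\ast b_\ast=0$, whence $F(a_\ast,b_\ast)=F_0(a_\ast,b_\ast)\le\max\{\mathscr C(\lambda_1,t,s),\mathscr C(\lambda_2,t,s)\}$. In either case $\max F<\mathscr C(\lambda_1,t,s)+\mathscr C(\lambda_2,t,s)$, giving $c_{MP}\le\max_\theta\mathcal J^+_\nu(\psi_\ast)<\mathscr C(\lambda_1,t,s)+\mathscr C(\lambda_2,t,s)$, which is the right inequality of \eqref{PS1}.

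Finally, combining both bounds, $2\mathscr C(\lambda_2,t,s)<c_{MP}<\mathscr C(\lambda_1,t,s)+\mathscr C(\lambda_2,t,s)<3\mathscr C(\lambda_2,t,s)$, where the last step uses $\mathscr C(\lambda_1,t,s)<2\mathscr C(\lambda_2,t,s)$ from \eqref{lamdasalphabeta}. Thus $c_{MP}$ lies strictly between the consecutive multiples $2\mathscr C(\lambda_2,t,s)$ and $3\mathscr C(\lambda_2,t,s)$ and exceeds $\mathscr C(\lambda_2,t,s)$, so $c_{MP}\neq\ell\,\mathscr C(\lambda_2,t,s)$ for every $\ell\in\mathbb N\setminus\{0\}$, which is \eqref{PS2}. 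With \eqref{PS1}--\eqref{PS2} in force and $\alpha\ge2$, $\mathscr C(\lambda_1,t,s)\ge\mathscr C(\lambda_2,t,s)$, Lemma~\ref{lemmaPS1} (if $\alpha+\beta<2^*_s(t)$) or Lemma~\ref{lemcritic} (if $\alpha+\beta=2^*_s(t)$, using \eqref{H_1}) furnishes a strongly convergent subsequence for any PS sequence at level $c_{MP}$, i.e. the PS condition holds there. I expect the delicate step to be the strict upper bound: one must ensure that the subtracted, strictly positive coupling term genuinely pushes the maximum of the reduced two–variable energy below the non-interacting value $\mathscr C(\lambda_1,t,s)+\mathscr C(\lambda_2,t,s)$, which is exactly what the comparison $F<F_0$ off the axes delivers.
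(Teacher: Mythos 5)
Your argument is correct and follows essentially the same route as the paper: the lower bound is extracted from the uniform estimate in Lemma~\ref{p1}, the upper bound comes from the same explicit competitor path (your $(\cos\theta\,z_1^{(1)},\sin\theta\,z_1^{(2)})$ is the paper's $((1-\tau)^{1/2}z_\mu^{(1)},\tau^{1/2}z_\mu^{(2)})$ under $\tau=\sin^2\theta$) projected onto $\mathcal N_\nu^+$, with the strictly positive coupling term forcing the maximum below $\mathscr C(\lambda_1,t,s)+\mathscr C(\lambda_2,t,s)$, and the conclusion is then read off from Lemmas~\ref{lemmaPS1} and~\ref{lemcritic}. Your reduction via the fibering property to the two-variable function $F\le F_0$ is a cleaner packaging of the paper's $g(\tau)$ computation, and you are slightly more explicit than the paper in ruling out $c_{MP}=2\mathscr C(\lambda_2,t,s)$ via the uniform bound $c_{MP}>\frac{2s-t}{N-t}\bigl[\tfrac{\mathcal S(\lambda_1,t,s)+\mathcal S(\lambda_2,t,s)}{2}\bigr]^{\frac{N-t}{2s-t}}$.
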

\begin{proof}
Let us take the path $ \psi(t) =( \psi_1(\tau),   \psi_2(\tau))=\left((1-\tau)^{1/2} z_\mu^{(1)},\tau^{1/2}z_\mu^{(2)} \right)$ with $\tau\in[0,1]$. By definition of the Nehari manifold, there exists a positive function $\gamma:[0,1]\mapsto(0,+\infty)$ such that $\gamma \psi \in \mathcal{N}_\nu^+\cap \mathcal{N}_\nu$ for  $\tau\in[0,1]$. Note that $\gamma(0)=\gamma(1)=1$. As before, we define
\begin{equation*}
\sigma(\tau)=(\sigma_1(\tau),\sigma_2(\tau))=\left(\int_{\mathbb{R}^N}\frac{\left( \gamma \psi_1(\tau)\right)^{2^*_{s}(t)}}{|x|^{t}} \, dx , \int_{\mathbb{R}^N} \frac{\left( \gamma \psi_2(\tau)\right)^{2^*_{s}(t)}}{|x|^{t}} \, dx \right).
\end{equation*}
By \eqref{relation}, we have that
\begin{equation}\label{Mpgeom7}
\sigma_1(0)=[\mathcal{S}(\lambda_1,t,s)]^{\frac{N-t}{2s-t}}\quad\text{and}\quad \sigma_2(1)=[\mathcal{S}(\lambda_2,t,s)]^{\frac{N-t}{2s-t}}.
\end{equation}
Since $\gamma \psi(t)\in\mathcal{N}^+_\nu\cap \mathcal{N}_\nu$, using \eqref{normH}, we get
\begin{equation*}
\begin{split}
\left\|\left((1-\tau)^{1/2} z_\mu^{(1)},\tau^{1/2}z_\mu^{(2)} \right)\right\|^2_\mathbb{D} &= (1-\tau)\sigma_1(0)+\tau\sigma_2(1) \\
&= \gamma^{2^*_{s}(t)-2}(\tau)\left((1-\tau)^{2^*_{s}(t)/2} \sigma_1(0) + \tau^{2^*_{s}/2}\sigma_2(1)\right)   \\
&\mkern+20mu + \nu (\alpha+\beta)\gamma^{\alpha+\beta-2} (\tau)(1-\tau)^{\alpha/2}\tau^{\beta/2} \int_{\mathbb{R}^N} h(x) \frac{ (z_\mu^{(1)})^\alpha (z_\mu^{(2)})^{\beta}}{|x|^t} dx,
\end{split}
\end{equation*}
implying that, for every  $\tau\in(0,1)$, it holds
\begin{equation}\label{g1}
(1-\tau)\sigma_1(0)+\tau\sigma_2(1)>\gamma^{2^*_{s}(t)-2}(\tau)\left((1-\tau)^{2^*_{s}(t)/2} \sigma_1(0) + \tau^{2^*_{s}(t)/2}\sigma_2(1)\right).
\end{equation}
As $\gamma \psi \in \mathcal{N}_\nu^+$, because of \eqref{Nnueq} and \eqref{g1}, we find
\begin{equation}\label{g2}
\begin{split}
\mathcal{J}_\nu^+ (\gamma \psi(\tau))=&\ \left( \frac{1}{2}-\frac{1}{\alpha+\beta} \right)\|\gamma\psi(\tau) \|^2_\mathbb{D}\\
&+ \left(\frac{1}{\alpha+\beta}- \frac{1}{2^*_{s}(t)} \right) \gamma^{2^*_{s}(t)}(\tau)  \int_{\mathbb{R}^N}\frac{(\psi_1(\tau))^{2^*_{s}(t)}}{|x|^{t}}dx   +\int_{\mathbb{R}^N}\frac{(\psi_2(\tau))^{2^*_{s}(t)}}{|x|^{t}}dx\vspace{0.3cm} \\
 =&\ \gamma ^2(\tau)  \left( \frac{1}{2}-\frac{1}{\alpha+\beta} \right)  \left[(1-\tau) \sigma_1(0) + \tau \sigma_2(1) \right] \\
 &+ \gamma^{2^*_{s}(t)} \left(\frac{1}{\alpha+\beta}- \frac{1}{2^*_{s}(t)} \right) \left[  (1-\tau)^{2^*_{s}(t)/2} \sigma_1(0)   +  \tau^{2^*_{s}(t)/2} \sigma_2(1)\right] \\
<&\ \frac{2s-t}{2(N-t)} \gamma^2(\tau)\left[(1-\tau)\sigma_1(0) + \tau  \sigma_2(1) \right].
\end{split}
\end{equation}
From \eqref{g1} and \eqref{g2}, we deduce that
\begin{equation*}
g(t)\!\vcentcolon=\!\frac{2s-t}{2(N-t)}\!\! \left[ \dfrac{(1-\tau) \sigma_1(0) + \tau \sigma_2(1)}{(1\!-\!\tau)^{2^*_{s}(t)/2} \sigma_1(0)\!+\! \tau^{2^*_{s}/2} \sigma_2(1)}  \right]^{\frac{2}{2^*_{s}-2}} \left[(1-\tau)\sigma_1(0) + \tau  \sigma_2(1) \right]\!\ge\!\max_{\tau\in[0,1]} \mathcal{J}_\nu^+ (\gamma \psi(\tau)).
\end{equation*}
It is easy to see that the function $g$ attains its maximum value at $\tau=\frac{1}{2}$. Indeed, because of \eqref{Mpgeom7}, 
\begin{equation*}
g\left(\dfrac{1}{2}\right)=\frac{2s-t}{2(N-t)} \left(\sigma_1(0) +  \sigma_2(1)\right) = \mathscr{C}(\lambda_1,t,s)+\mathscr{C}(\lambda_2,t,s).
\end{equation*}
Then, using \eqref{g2} and  \eqref{lamdasalphabeta}, we get  
\[
\mathcal{J}_\nu^+ (\gamma \psi(\tau))<  \mathscr{C}(\lambda_1,t,s)+\mathscr{C}(\lambda_2,t,s) < 3 \mathscr{C}(\lambda_2,t,s),
\]
and, consequently,
\[\mathscr{C}(\lambda_2,t,s)<\mathscr{C}(\lambda_1,t,s)<c_{MP} \leq \max_{\tau\in[0,1]} \mathcal{J}_\nu^+(\gamma \psi(\tau))<  3 \mathscr{C}(\lambda_2,t,s).
\]
Then, the  Mountain--Pass level $c_{MP}$ satisfies the assumptions of Lemmas~\ref{lemmaPS1} and \ref{lemcritic}.
\end{proof}
\begin{proof}[Proof of Theorem~\ref{MPgeom}]
Taking in mind Lemmas \ref{p1} and \ref{p2} above, by the Mountain-Pass theorem, the functional $\mathcal{J}^+_\nu|_{\mathcal{N}
^+_\nu}$ has a PS sequence $\{(u_n,v_n)\}$ at level $c_{MP}$. If $\alpha+\beta<2^*_s(t)$, adapting Lemma~\ref{lem_PSNehari} for $\mathcal{J}^+_\nu$ combined with Lemma \ref{lemmaPS1}, it follows that the sequence $\{(u_n,v_n)\}$ has a subsequence which strongly converges to a critical point $(\tilde{u},\tilde{v})$ of the truncated functional $\mathcal{J}^+_\nu$ on the Nehari manifold $\mathcal{N}^+_\nu$. Indeed, $(\tilde{u},\tilde{v} )$ is also a critical point of $\mathcal{J}_\nu$ on $\mathcal{N}_\nu$ and, thus, it is also a critical point of $\mathcal{J}_\nu$ defined in $\mathbb{D}$. Moreover, we have $\tilde{u},\tilde{v}\geq0$ in $\mathbb{R}^N\backslash\{0\}$ and $\tilde{u},\tilde{v}\not\equiv0$. Fuerthermore, by the maximum principle \cite[Theorem 1.2]{DelPezzo2017} we conclude $\tilde{u},\tilde{v}\not\equiv0$ in $\mathbb{R}^N\backslash\{0\}$. Hence, $( \tilde{u},\tilde{v} )$ is a bound state solution to the system \ref{system}. In the critical regime $\alpha+\beta=2_s^*(t)$, the result follows by proving the compactness part using Lemma~\ref{lemcritic}.
\end{proof}


\end{document}